\theoremstyle{plain}
\newtheorem{theorem}{Theorem}[section]
\newtheorem{lemma}[theorem]{Lemma}
\newtheorem{prop}[theorem]{Proposition}
\newtheorem{cor}[theorem]{Corollary}
\theoremstyle{definition}
\newtheorem{defn}[theorem]{Definition}
\newtheorem{ques}[theorem]{Question}
\newtheorem{notadef}[theorem]{Notation and Definition}
\newtheorem{rmknota}[theorem]{Notation and Remark}
\newtheorem{rmk}[theorem]{Remark}
\newtheorem{nota}[theorem]{Notation}
\newtheorem{example}[theorem]{Example}
\def\D{\mathcal{D}}
\def\C{\mathcal{C}}
\def\F{\mathcal{F}}
\def\sS{\mathcal{S}}
\def\bQ{\mathbb{Q}}
\def\bZ{\mathbb{Z}}
\def\bF{\mathbb{F}}
\def\bN{\mathbb{N}}
\def\Hom{\mathrm{Hom}}
\def\End{\mathrm{End}}
\def\Res{\mathrm{Res}}
\def\Ker{\mathrm{Ker}}
\def\Im{\mathrm{Im}}
\def\Inf{\mathrm{Inf}}
\def\Ind{\mathrm{Ind}}
\def\Def{\mathrm{Def}}
\def\Ten{\mathrm{Ten}}
\def\Iso{\mathrm{Iso}}
\title[On the unit group of the Burnside ring]{On the unit group of the Burnside ring as a biset functor for some solvable groups}
\author{Jamison Barsotti}
\address{Department of Mathematics,
University of California Santa Cruz, Santa Cruz, CA 95064}
\email{jbarsott@ucsc.edu}
\begin{document}

\begin{abstract}
The theory of bisets has been very useful in progress towards settling the longstanding question of determining units for the Burnside ring. 
In 2006 Bouc used bisets to settle the question for $p$-groups. 
In this paper, we provide a standard basis for the unit group of the Burnside ring for groups that contain a abelian subgroups of index two. 
We then extend this result to groups $G$, where $G$ has a normal subgroup, $N$, 
of odd index, such that $N$ contains an abelian subgroups of index $2$. 
Next, we study the structure of the unit group of the Burnside ring as a biset functor, $B^\times$ on this class of groups and determine its lattice of subfunctors.
We then use this to determine the composition factors of $B^\times$ over this class of groups. 
Additionally, we give a sufficient condition for when the functor $B^\times$, defined on a class of groups closed under subquotients,
has uncountably many subfunctors.
\end{abstract}
\maketitle
{\bf Keywords:} finite group; representation theory; biset functor; Burnside ring
\section{Introduction}
If $G$ is a finite group, recall that the unit group of the Burnside ring, denoted by $B^\times(G)$, is always an elementary abelian $2$-group. 
It has been studied extensively by Yoshida 
(\cite{Yo}), Matsuda (\cite{Ma}, \cite{Ma2}), Yal\c{c}{\i}n (\cite{Ya}), and Bouc (\cite{B2}), although its rank for a general finite group is still unknown. In \cite{B2} Bouc succeeded in computing $B^\times(G)$ for the case where $G$ is a $p$-group using the theory of bisets. This is obtained through realizing the unit group of the Burnside ring as a biset functor, $B^\times$, on the biset category for finite groups $\C$ (see Definition~\ref{bisetCategoryDefinition}).

The main results of this paper continue in the tradition of Bouc's results for $B^\times$ on $p$-groups.
 Matsuda had previously determined $B^\times(G)$ for $G$ abelian and $G$ dihedral \cite{Ma}. Theorem~\ref{basisofab2} finds a standard basis for $B^\times(G)$ when $G$ has an abelian subgroup of index $1$ or $2$ and can be thought of as a generalization and merger of Matsuda's
results placed in the context of bisets. 
Corollary~\ref{oddIndexCor} then extends this computation to $B^\times(G)$ when $G$ contains a normal subgroups of odd index, that has an abelian subgroup
of index $1$ or $2$. We then turn our 
attention to Theorem~\ref{subParam}, which computes the lattice of subfunctors of $B^\times$ in the full subcategory of the biset category of finite groups $\C$, consisting of such groups, 
which denote by $\C'$, 
as well as determining its composition factors (Theorem~\ref{compFactorsBX}). The overall goal would be a 
theory of structure and computation for $B^\times$ when $G$ is solvable. Viewed in this way, this paper may be considered as progress in this direction.

If $\D$ is a subcategory of $\C$ that is closed under subquotients, an interesting consequence of Theorem~\ref{subParam} is Corollary~\ref{uncountableSubs}, 
which gives a sufficient condition on $\D$, for when $B^\times$, defined on $\D$, will have uncountably many subfunctors. 
In fact, Theorem~\ref{converseUncountableSubs} 
shows this is a necessary condition if $\D\subset \C'$. It is natural then to ask if we can relax the condition that $\D\subset \C'$ and still
have the same necessary condition for $\D$. If not, what other conditions do we need for the general case?

As an application of these results, Theorem~\ref{simpleComputation} computes the $\bF_2$-dimension for the simple biset functors 
$S_{G,\bF_2}(H)$ for various finite groups $G$ and $H$. In Example~\ref{computation1} and Example~\ref{computation2}, we make these computations explicit 
when $G$ is trivial or certain dihedral groups and $H$ is a dihedral group.
We connect this to the exponential map $\varepsilon_G:B(G)\to B^\times(G)$,
studied in particular by Bouc (\cite{B2}) and Yal\c{c}{\i}n (\cite{Ya}), and determine when it is surjective for the finite groups we focus on in 
Corollary~\ref{surjectiveExpo}. In particular, Corollary~\ref{surjectiveExpoDihedral} gives a numerical characterization for when $\varepsilon_{D_{2n}}$
is surjective for dihedral groups $D_{2n}$.
\\

{\bf Acknowledgments:}
The material in this paper is based on work done towards the author's Ph.D. thesis. The author would like to acknowledge and thank his advisor, Robert 
Boltje, whose patience, support, and encouragement where instrumental in the writing of this paper. 
The author would also like to thank Serge Bouc, who supplied the author with helpful ideas. In particular, it was on Bouc's suggestion that
Theorem~\ref{basisofab2} could be extended to a larger class of groups using Proposition 6.5 in \cite{B2}.

The author would also like to thank the referee for many helpful suggestions. In particular, the suggestion for the name of the the groups in Definition~\ref{pseudodihedralDefinition}.

\section{preliminaries}
Given a finite group $G$, recall the \emph{Burnside ring} of $G$, which we denote by $B(G)$, is defined to be the Grothendieck ring of isomorphism classes of finite (left) $G$-sets, with respect to the operations disjoint union and cartesian product. If we just look at the underlying abelian group structure, we call it the \emph{Burnside group} of $G$. 
If $X$ is a $G$-set, we will denote the image of $X$ in $B(G)$ as $[X]$.
Given a left $G$-set $X$, we can view it as a right $G$-set, via the action $x\cdot g=g^{-1}x$, for all $x\in X$ and $g\in G$, and vice versa. If $G$ and $H$ are both finite groups and $X$ is a left $G$-set and a right $H$-set such that, $(gx)h=g(xh)$ for any $x\in X$, $g\in G$, and $h\in H$, then we say $X$ is a \emph{$(G,H)$-biset}. There is a one-to-one correspondence between $(G,H)$-bisets and left $G\times H$-sets, so we set
the Burnside group of $(G,H)$-bisets to be $B(G,H):=B(G\times H)$.

\begin{rmk}[\cite{B1}, 2.3.9]
{\bf Elementary Bisets.} Let $G$ be a finite group. The following bisets are called \emph{elementary bisets}. We will make use of them frequently.
\begin{itemize}
\item If $H$ is a subgroup of $G$, then the set $G$ is an $(H,G)$-biset given by the usual left and right multiplication in $G$ and is denoted by $\Res_H^G$. 
We call this \emph{restriction} from $G$ to $H$.
\item If $H$ is a subgroup of $G$, then the set $G$ is a $(G,H)$-biset given by the usual left and right multiplication in $G$. It is denoted by $\Ind_H^G$. 
We call this \emph{induction} from $H$ to $G$.
\item If $N$ is a normal subgroup of $G$, then the set $G/N$ is a $(G,G/N)$-biset given by the usual multiplication on the right and projection to $G/N$ then multiplication on the left. It is denoted 
$\Inf_{G/N}^G$. We call this \emph{inflation} from $G/N$ to $G$.
\item If $N$ is a normal subgroup of $G$, then the set $G/N$ is a $(G/N,G)$-biset given by the usual multiplication on the left and projection to $G/N$ then multiplication on the right. It is denoted 
$\Def_{G/N}^G$. We call this \emph{deflation} from $G$ to $G/N$.
\item If $f:G\to H$ is a group isomorphism, then the set $H$ is an $(H,G)$-biset, where the left action is usual multiplication, and the right action is given by multiplication of the image through $f$, denoted by $\Iso(f)$ or $\Iso_H^G$ if the isomorphism is clear from the context.
\end{itemize}
We abusively use the same notation for the images of elementary bisets in their respective Burnside groups.
\end{rmk}

The elementary bisets satisfy various relations which are summarized in \cite{B1}. We list a few of the ones pertinent to this paper for reference.

\begin{rmk}(\cite{B1}, 1.1.3)\label{ebisetTranCom}
\hfill
\begin{enumerate}
\item \emph{Transitivity of relations:}
\begin{enumerate}
\item If $K\leq H\leq G$, then 
\[\Res^H_K\circ \Res^G_H=\Res^G_K\in B(K,G),\]
\[\Ind^G_H\circ \Ind^H_K=\Ind^G_K\in B(G,K).\]
\item If $N$ and $M$ are normal subgroups of $G$ with $N\leq M$, then
\[\Inf^G_{G/N}\circ \Inf^{G/N}_{G/M}=\Inf^G_{G/M}\in B(G,G/M),\]
\[\Def^{G/N}_{G/M}\circ \Def^{G}_{G/N}=\Def^G_{G/M}\in B(G/M,G).\]
Here, $\Inf_{G/M}^{G/N}:=\Inf^{G/N}_{(G/N)/(M/N)}\circ\Iso(\alpha^{-1})$ and $\Def_{G/M}^{G/N}:=\Iso(\alpha)\circ\Def^{G/N}_{(G/N)/(M/N)}$
where $\alpha:(G/N)/(M/N)\to G/M$ is the canonical isomorphism.

\end{enumerate}
\item \emph{Commutativity relations:}
\begin{enumerate}
\item (Mackey formula) If $H$ and $K$ are subgroups of $G$, then
\[\Res_H^G\circ\Ind_K^G=\sum_{x\in[H\backslash G/K]}\Ind^H_{H\cap\,^xK}\circ\Iso(\gamma_x)\circ\Res^K_{H^x\cap K},\]
where $[H\backslash G/K]$ is a set of representatives of $(H,K)$-double cosets in $G$, and $\gamma_x:H^x\cap K\to H\cap\,^xK$ is the group isomorphism induced by conjugation by $x$.
\item If $H$ is a subgroup of $G$ and $N$ is a normal subgroup of $G$ such that $N\leqslant H$, then
\[\Ind_H^G\circ\Inf_{H/N}^H=\Inf^G_{G/N}\circ\Ind_{H/N}^{G/N}.\]
\end{enumerate}
\end{enumerate}
\end{rmk}

Let $G$, $H$, and $K$ be finite groups, $X$ a $(G,H)$-biset, and $Y$ is an $(H,K)$-biset. We set $X\times_HY$ to be the set of $H$-obits of the $H$-set $X\times Y$ with the action
$h\cdot(x,y)=(xh^{-1},hy)$,  for any $h\in H$ and $(x,y)\in X\times Y$.
We then let $X\times_HY$ take on the natural action from $G\times K$. Elements of $X\times_HY$ are denoted by $(x,_Hy)$. The operation $\times_H$ induces a bilinear map
from $B(G,H)\times B(H,K)\to B(G,K)$.

\begin{defn}[\cite{B1}, 3.1.1]\label{bisetCategoryDefinition}
The \emph{biset category $\C$} of finite groups is the category defined as follows:
\begin{itemize}
\item The objects of $\C$ are finite groups.
\item If $G$ and $H$ are finite groups, then $\Hom_\C(G,H)=B(H,G)$.
\item If $G, H$, and $K$ are finite groups, then the composition $v\circ u$ of morphisms $u\in \Hom_\C(G,H)$ and $v\in \Hom_\C(H,K)$ is equal to $v\times_Hu$.
\item For any finite group $G$, the identity morphism of $G$ in $\C$ is equal to $[G].$
\end{itemize}
\end{defn}

\begin{rmk}\label{elementaryBisetPresentation}
The elementary bisets induce a presentation of $\C$, (\cite{B1}, Remark $3.1.2$). In particular, given any two objects $G,H\in \C$,  for any $\alpha\in B(G,H)$
that is the image of a transitive $(G, H)$-biset, there is a subquotient $A/B$ of $G$ and a subquotient $C/D$ of $H$, and an isomorphism $f:C/D\cong A/B$ such that
\[\alpha=\Ind^G_A\circ\Inf^A_{A/B}\circ\Iso(f)\circ\Def_{C/D}^D\circ\Res_{C}^H.\] 

\end{rmk}

If $R$ is a commutative ring with identity and $\D$ is a preadditive subcategory of $\C$, then the category $R\D$ is the category whose objects are the same as $\D$ and if
$G$ and $H$ are two objects in $R\D$, then $\Hom_{R\D}(G,H)=R\otimes_{\bZ}\Hom_{\D}(G,H)$. A \emph{biset functor} on $\D$ with values in $_R\text{Mod}$ is an $R$-linear functor
from $R\D$ to $_R\text{Mod}$ (\cite{B1}, 43). The category of biset functors on $\D$ with values in $_R\text{Mod}$ whose morphisms are natural transformations is denoted by $\F_{\D,R}$. It is straightforward to verify that $\F_{\D,R}$ is an abelian category. For more information, see Proposition $3.2.8$ of \cite{B1}.

For the rest of the paper, we will mostly consider subcategories $\D\subset \C$ that satisfy the following definition.

\begin{defn}[\cite{B1}, 4.1.7, page 55]
A class $\D$ of finite groups is said to be closed under taking subquotients if any group isomorphic to a subquotient of an object of $\D$ is in $\D$.

A subcategory $\D$ of $\C$ is said to be  \emph{replete} if it is a full subcategory whose class of objects is closed under taking subquotients.
\end{defn}

\subsection{Simple biset functors}
\hfill\\

Suppose $R$ is commutative ring with identity and $\D$ is a preadditive subcategory. If $F$ is an object in $\F_{\D,R}$, (i.e. a biset functor on $\D$ with values in $\,_R\text{Mod}$)
we say that $F$ is a \emph{simple biset functor} if it is a simple object in $\F_{\D,R}$.

\begin{prop}[\cite{B1}, 4.2.2]\label{resSimpOrZero}
Let $R$ be a commutative ring with identity and let $\D$ be a replete subcategory of $\C$, let $\D'$ be a full subcategory of $\D$.\\
If $F$ is a simple object of $\F_{\D,R}$, then the restriction of $\F$ to $R\D'$ is either the zero object or a simple object of $\F_{\D',R}$.
\end{prop}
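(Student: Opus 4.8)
The plan is to argue directly with subfunctors, using that $\F_{\D,R}$ and $\F_{\D',R}$ are abelian categories and that fullness of $\D'$ lets us control how the subfunctor of $F$ \emph{generated} by a family of submodules behaves upon restriction. First I would dispose of the trivial alternative: if $\bar F:=\Res^{\D}_{\D'}F$ is the zero functor there is nothing to prove, so assume $\bar F\neq 0$ and let $S$ be an arbitrary nonzero subfunctor of $\bar F$ in $\F_{\D',R}$; the goal is to show $S=\bar F$.

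Next I would build from $S$ a subfunctor of $F$ itself. For every object $G$ of $\D$ put
\[
\tilde S(G)=\sum_{H\in\mathrm{Ob}(\D')}\ \sum_{\phi\in\Hom_{R\D}(H,G)}F(\phi)\bigl(S(H)\bigr)\subseteq F(G).
\]
Each $\tilde S(G)$ is an $R$-submodule, and the family is stable under all morphisms of $R\D$: if $\psi\in\Hom_{R\D}(G,G')$ then $F(\psi)F(\phi)=F(\psi\circ\phi)$ by $R$-linear functoriality, so $F(\psi)\bigl(\tilde S(G)\bigr)\subseteq\tilde S(G')$. Hence $\tilde S$ is a subfunctor of $F$ in $\F_{\D,R}$, and it is nonzero: picking $H\in\D'$ with $S(H)\neq 0$ and taking $\phi=\mathrm{id}_H$ gives $\tilde S(H)\supseteq S(H)\neq 0$. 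Since $F$ is simple, $\tilde S=F$.

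Finally I would check that restricting $\tilde S$ back to $R\D'$ recovers $S$, which is exactly where fullness of $\D'$ is used. Fix $G\in\D'$. For any $H\in\D'$ and any $\phi\in\Hom_{R\D}(H,G)$, fullness gives $\phi\in\Hom_{R\D'}(H,G)$, so $F(\phi)\bigl(S(H)\bigr)=\bar F(\phi)\bigl(S(H)\bigr)\subseteq S(G)$ because $S$ is a subfunctor of $\bar F$; therefore $\tilde S(G)\subseteq S(G)$. The reverse inclusion comes from the term $\phi=\mathrm{id}_G$. Thus $S(G)=\tilde S(G)=F(G)=\bar F(G)$ for every $G\in\D'$, i.e. $S=\bar F$, so $\bar F$ is simple.

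I do not expect a serious obstacle; the only points requiring care are that the "subfunctor generated by $\{S(H)\}_{H\in\D'}$" really is a subfunctor — well-definedness and stability under composition, which rests on $R$-linearity of $F$ — and that it is \emph{fullness} of $\D'$, not repleteness, that forces the restriction of this generated subfunctor to collapse back onto $S$.
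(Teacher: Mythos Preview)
Your argument is correct. The paper does not supply its own proof of this proposition; it is simply quoted from \cite{B1}, Proposition~4.2.2, and used as a black box. What you have written is the standard proof (and is essentially the argument one finds in Bouc's book): extend a nonzero subfunctor $S$ of $\bar F$ to the subfunctor $\tilde S$ of $F$ generated by the modules $S(H)$, use simplicity of $F$ to get $\tilde S=F$, and then use fullness of $\D'$ in $\D$ to see that $\tilde S|_{\D'}=S$. Your closing remark is also on point: repleteness of $\D$ plays no role here, only fullness of $\D'$.
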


Recall that the \emph{double Burnside ring} of $G$ over $R$ is the endomorphism algebra $E_G:=\End_{R\D}(G)=RB(G,G)$.

\begin{prop}[\cite{B1}, 4.3.2]
Let $R$ be a commutative ring with identity, and $\D$ a replete subcategory of $\C$. If $G$ is an object of $\D$, denote $I_G$ to be the $R$-submodule of $E_G$ generated
by all endomorphisms of $G$ which can be factored through some object $H$ of $\D$ with $|H|<|G|$. Then $I_G$ is a two sided ideal of $E_G$, and there is a decomposition
\[E_G=A_G\oplus I_G\]
where $A_G$ is an $R$-subalgebra, isomorphic to the group algebra $R\mathrm{Out}(G)$ of the group of outer automorphisms of $G$.
\end{prop}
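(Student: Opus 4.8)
The plan is to prove the decomposition $E_G = A_G \oplus I_G$ by analyzing the standard basis of the double Burnside ring $E_G = RB(G,G)$ coming from the transitive biset presentation in Remark~\ref{elementaryBisetPresentation}. Recall that $B(G,G) = B(G \times G)$ has a $\bZ$-basis indexed by conjugacy classes of subgroups of $G \times G$, equivalently (by Goursat's lemma) by conjugacy classes of quintuples $(A,B,C,D,f)$ with $B \trianglelefteq A \leq G$, $D \trianglelefteq C \leq G$, and $f : C/D \xrightarrow{\sim} A/B$, each such datum corresponding to the transitive biset $\Ind^G_A \circ \Inf^A_{A/B} \circ \Iso(f) \circ \Def^C_{C/D} \circ \Res^G_C$. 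So first I would partition this basis into two sets: those quintuples where the section $A/B$ (equivalently $C/D$) has order strictly less than $|G|$, and those where $|A/B| = |G|$. The latter forces $A = G$, $B = 1$, $C = G$, $D = 1$, so $f \in \mathrm{Aut}(G)$ and the biset is just $\Iso(f)$; these basis elements span a free $R$-module $A_G$ of rank $|\mathrm{Aut}(G)|$, and I would identify it below with $R\,\mathrm{Out}(G)$.

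Next I would show $I_G$ is spanned by exactly the first set of basis elements (those with $|A/B| < |G|$), hence $E_G = A_G \oplus I_G$ as $R$-modules. One containment is immediate: a transitive biset with section $A/B$ of order $< |G|$ factors through the group $A/B$, which has order $< |G|$, so it lies in $I_G$. For the reverse, if an endomorphism factors as $\beta \circ \alpha$ through $H$ with $|H| < |G|$, then expanding $\alpha$ and $\beta$ in terms of transitive bisets and composing using the Mackey-type formulas, every transitive constituent of the composite has a section that is a subquotient of $H$ (composition of bisets can only shrink or preserve the "minimal section", and it must pass through a subquotient of $H$), hence of order $< |G|$; so $I_G$ is contained in the span of the first set. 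This also shows $I_G$ is a two-sided ideal, since composing any transitive biset on either side with one whose section has order $< |G|$ again yields bisets whose sections are subquotients of something of order $< |G|$.

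Then I would establish the algebra structure. First, $A_G$ is closed under composition: $\Iso(f) \circ \Iso(g) = \Iso(fg)$ for $f,g \in \mathrm{Aut}(G)$, and the identity $[G] = \Iso(\mathrm{id}_G)$ lies in $A_G$, so $A_G$ is an $R$-subalgebra. The map $\mathrm{Aut}(G) \to A_G$, $f \mapsto \Iso(f)$, extends $R$-linearly to a surjective algebra homomorphism $R\,\mathrm{Aut}(G) \to A_G$; its kernel is spanned by differences $\Iso(f) - \Iso(f')$ with $f' = c_g \circ f$ an inner twist, because $\Iso(f)$ and $\Iso(c_g \circ f)$ are conjugate (isomorphic) $(G,G)$-bisets and hence equal in $B(G,G)$, while distinct $\mathrm{Inn}(G)$-cosets give non-conjugate subgroups of $G \times G$ and hence $R$-linearly independent basis elements. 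Therefore $A_G \cong R\,\mathrm{Aut}(G) / \ker \cong R(\mathrm{Aut}(G)/\mathrm{Inn}(G)) = R\,\mathrm{Out}(G)$.

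The main obstacle is the careful bookkeeping in the second paragraph: one must verify rigorously — using the composition formula for transitive bisets (the Bouc "butterfly"/Goursat computation for $X \times_H Y$) together with the Mackey formula and the commutation relations in Remark~\ref{ebisetTranCom} — that composing two transitive bisets produces a sum of transitive bisets each of whose sections is subquotient-bounded by the sections of the factors, so that a factorization through a smaller group genuinely forces all sections to have order $< |G|$. Everything else (the Goursat parametrization of the basis, $\Iso(f)\circ\Iso(g) = \Iso(fg)$, conjugacy of inner twists) is standard and essentially bookkeeping; once the ideal is pinned down as a span of basis elements, the direct-sum decomposition and the identification of $A_G$ with $R\,\mathrm{Out}(G)$ follow formally.
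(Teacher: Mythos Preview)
The paper does not prove this proposition; it is quoted verbatim from \cite{B1}, Proposition~4.3.2, and used as a black box. Your outline is essentially the argument given in Bouc's book: partition the standard basis of $RB(G,G)$ (indexed by $G\times G$-conjugacy classes of subgroups of $G\times G$, equivalently Goursat quintuples) according to whether the associated section has order $|G|$ or strictly less, and use the composition formula for transitive bisets to see that $I_G$ is exactly the span of the ``small-section'' basis elements.

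One slip to fix: you write that the basis elements with $|A/B|=|G|$ span a free $R$-module of rank $|\mathrm{Aut}(G)|$. This is not right. The basis of $B(G,G)$ is indexed by \emph{conjugacy classes} of subgroups of $G\times G$, and for $f,f'\in\mathrm{Aut}(G)$ the graphs $\Delta_f(G)$ and $\Delta_{f'}(G)$ are $G\times G$-conjugate precisely when $f$ and $f'$ have the same image in $\mathrm{Out}(G)$. So there are already only $|\mathrm{Out}(G)|$ such basis elements, and $A_G$ is free of rank $|\mathrm{Out}(G)|$ from the outset. Your later paragraph, where you produce a surjection $R\,\mathrm{Aut}(G)\twoheadrightarrow A_G$ with nontrivial kernel, is the correct picture and in fact contradicts the earlier rank claim; once you drop the ``rank $|\mathrm{Aut}(G)|$'' sentence the argument is clean. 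The remaining point you flag as the ``main obstacle'' (that every transitive summand of a composite through $H$ has section a subquotient of $H$) is exactly Bouc's Lemma~2.3.24, and with that in hand the proof goes through.
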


\begin{rmk}\label{simpleParam}
We briefly recall the parametrization of simple biset functor on a replete subcategory $\D$ of $\C$. Full details can be found in chapter $4$ of \cite{B1}:\\
If $S$ is a simple object of $\F_{\D,R}$, then there is a minimal object $G$ of $\D$, with respect to order, such that $S(G)\neq \{0\}$ is a simple $E_G/I_G\cong R\text{Out}(G)$-module, 
and for any object $H$ in $\D$ such that $S(H)\neq\{0\}$, $G$ is isomorphic to a subquotient of $H$. Conversely, if $G$ is an object of $\D$ and $V$ is a simple $R\text{Out}(G)$-module, then
there is a simple object of $\F_{\D,R}$, which we denote by $S_{G,V}$, such that $S_{G,V}(G)\cong V$ and for any object $H$ in $\D$, $S_{G,V}(H)\neq 0$ implies that $G$ is isomorphic to a 
subquotient of $H$. Two simple objects in $\F_{\D,R}$, $S_{G,V}$ and $S_{G',V'}$ are isomorphic if and only if there is a group isomorphism $\varphi:G\to G'$ and an $R$-module isomorphism 
$\psi:V\to V'$ such that for all $v\in V$, and all $a\in \text{Out}(G)$, $\psi(a\cdot v)=(\varphi a\varphi^{-1})\cdot\psi(v).$
\end{rmk}

\subsection{Composition factors}
\hfill

\begin{defn}
Let $\D$ be a preadditive subcategory of $\C$. Suppose $F$ is an object of $\F_{\D,R}$. A simple functor $S$ is a \emph{composition factor} of $F$ if there are subfunctors 
$F''\subset F'\subset F$ on $R\D$, such that $F'/F''\cong S$.
\end{defn}

If $G$ is a finite group we let $\C_{\sqsubseteq_G}$ denote the full subcategory of $\C$ whose objects are subquotients of $G$.

\begin{defn}[\cite{We}]
Let $G$ be an object of $\D$ and $F$ an object of $\F_{\D,R}$. The functor $F$ has a \emph{composition series over $G$}
if there is a series of subfunctors
\[0=T_0\subseteq B_1\subset T_1\subseteq\cdots \subseteq B_m\subset T_m\subseteq B_{m+1}=F\]
such that
\begin{itemize}
\item $T_i/B_i$ is a simple functor, whose restriction to $\C_{\sqsubseteq_G}$ is nonzero for all $i=1\dots,m$.
\item $\Res^\D_{\C_{\sqsubseteq_G}}(B_{i+1}/T_i)=0$ for all $i=0,\dots,m$.
\end{itemize}

If such a composition series exists, we will call the set of simple functors $T_i/B_i$ together with their multiplicities the
\emph{composition factors of $F$ over $G$}.
\end{defn}

\begin{prop}[\cite{We}, Proposition 3.1]
Let $G$ be a fixed object of $R\D$ and $F$ a biset functor on $R\D$. If $F$ has a composition series over $G$, then any other composition series over $G$ of $F$ has the same length and the composition factors over $G$ (taken with multiplicities)
are the same.
\end{prop}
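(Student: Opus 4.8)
The statement is a relative version of the Jordan--H\"older theorem, and my plan is to reduce it to the ordinary one by transporting everything along the restriction functor into the subcategory $\C_{\sqsubseteq_G}$. Throughout, write $\Rest:=\Res^\D_{\C_{\sqsubseteq_G}}\colon\F_{\D,R}\to\F_{\C_{\sqsubseteq_G},R}$ for the restriction along the full subcategory inclusion $\C_{\sqsubseteq_G}\subseteq\D$. Because $(\Rest F)(K)=F(K)$ for every object $K$ of $\C_{\sqsubseteq_G}$, and because monomorphisms, epimorphisms, and exactness in these functor categories are detected objectwise, $\Rest$ is exact: it carries subfunctors to subfunctors, sends $0$ to $0$, and commutes with the formation of quotients.

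The first and main step is to verify that $\Rest$ converts a composition series over $G$ into an honest composition series of $\Rest F$. Given
\[0=T_0\subseteq B_1\subset T_1\subseteq\cdots \subseteq B_m\subset T_m\subseteq B_{m+1}=F,\]
apply $\Rest$ to the short exact sequences $0\to T_i\to B_{i+1}\to B_{i+1}/T_i\to 0$; since $\Rest(B_{i+1}/T_i)=0$ by hypothesis, we obtain $\Rest(B_{i+1})=\Rest(T_i)$ for $i=0,\dots,m$, so in particular $\Rest(B_1)=\Rest(T_0)=0$ and $\Rest(T_m)=\Rest(B_{m+1})=\Rest F$. Applying $\Rest$ to $0\to B_i\to T_i\to T_i/B_i\to 0$ gives $\Rest(T_i)/\Rest(B_i)\cong\Rest(T_i/B_i)$, which is nonzero by hypothesis, hence simple by Proposition~\ref{resSimpOrZero}. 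Since $\Rest(B_i)=\Rest(T_{i-1})$ for $i=1,\dots,m$, the chain
\[0=\Rest(T_0)\subseteq\Rest(T_1)\subseteq\cdots\subseteq\Rest(T_m)=\Rest F\]
has simple successive quotients $\Rest(T_i/B_i)$, $i=1,\dots,m$, so it is a composition series of $\Rest F$ of length $m$; in particular $\Rest F$ has finite length. Now the classical Jordan--H\"older theorem in the abelian category $\F_{\C_{\sqsubseteq_G},R}$ tells us that any two composition series of $\Rest F$ have the same length $m$ and the same multiset of simple subquotients. As any composition series over $G$ of $F$ restricts, by the preceding argument, to a composition series of $\Rest F$, it follows that all composition series over $G$ of $F$ have length $m$ and yield the same multiset of \emph{restricted} factors $\{\Rest(T_i/B_i)\}$.

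The remaining, and only genuinely delicate, point is to lift this equality of multisets of restricted factors back to an equality of multisets of the original simple functors $\{T_i/B_i\}$ on $\D$; equivalently, one must show that $\Rest$ is injective on isomorphism classes of simple functors with nonzero restriction. For this I would invoke the parametrization recalled in Remark~\ref{simpleParam}. If $S$ is a simple functor on $\D$ with minimal group $H$ and $S(H)=V$, then $\Rest S\neq0$ forces $H$ to be a subquotient of $G$, i.e.\ $H\in\C_{\sqsubseteq_G}$; in that case $H$ remains a minimal group for $\Rest S$ (any $K\in\C_{\sqsubseteq_G}$ with $(\Rest S)(K)=S(K)\neq0$ has $H$ as a subquotient) and $(\Rest S)(H)=V$, so $\Rest S\cong S^{\C_{\sqsubseteq_G}}_{H,V}$. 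The isomorphism criterion of Remark~\ref{simpleParam}, read in $\F_{\D,R}$ and in $\F_{\C_{\sqsubseteq_G},R}$, then shows $\Rest S_{H,V}\cong\Rest S_{H',V'}$ if and only if $S_{H,V}\cong S_{H',V'}$. Hence the multiset $\{\Rest(T_i/B_i)\}$ determines the multiset $\{T_i/B_i\}$ up to isomorphism, and the equality established above descends to the desired equality in $\F_{\D,R}$, completing the proof. Apart from the bookkeeping in this last paragraph (checking that minimality of the identifying group is preserved under $\Rest$), the argument is just exactness of $\Rest$ together with the ordinary Jordan--H\"older theorem.
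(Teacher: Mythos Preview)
The paper does not supply its own proof of this proposition; it is simply cited from Webb \cite{We}. Your argument is correct and is essentially the standard approach (and the one Webb uses): restrict along the exact functor $\Rest=\Res^\D_{\C_{\sqsubseteq_G}}$, so that a composition series over $G$ collapses to an honest composition series of $\Rest F$ via Proposition~\ref{resSimpOrZero}, apply the classical Jordan--H\"older theorem in the abelian category $\F_{\C_{\sqsubseteq_G},R}$, and then use the parametrization of simples (Remark~\ref{simpleParam}) to see that $\Rest$ is injective on isomorphism classes of simple functors with nonzero restriction.

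One small bookkeeping remark: your appeals to Proposition~\ref{resSimpOrZero} and Remark~\ref{simpleParam} implicitly assume that $\D$ is replete (so that $\C_{\sqsubseteq_G}\subseteq\D$ and the parametrization of simples is available in both categories). This is the standing hypothesis in the paper, but the proposition as stated does not name it; you may want to flag that assumption at the outset.
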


\begin{theorem}[\cite{Ba}, Theorem 2.42]
\label{compSeriesExist}
If $R$ is a field then every biset functor on $R\D$ with values in the category of finitely generated $R$-modules $\,_R\mathrm{mod}$ 
has a composition series over $G$, for all objects $G$ in $R\D$.
\end{theorem}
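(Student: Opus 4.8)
The plan is to induct on the composition length $\ell(F)$ of the restriction $\Res(F):=\Res^\D_{\C_{\sqsubseteq_G}}(F)$, regarded as an object of $\F_{\C_{\sqsubseteq_G},R}$ (note that $\C_{\sqsubseteq_G}$ is a full subcategory of $\D$, so this makes sense). The first point is that $\ell(F)$ is finite: there are only finitely many subquotients of $G$ up to isomorphism, and each morphism space $B(H,K)=B(H\times K)$ is a free $\bZ$-module of finite rank, so the $R$-algebra $\Lambda:=\bigoplus_{i,j}\Hom_{R\C_{\sqsubseteq_G}}(G_i,G_j)$ (the sum running over representatives $G_i$ of the isomorphism classes of objects of $\C_{\sqsubseteq_G}$, with unit $\sum_i[G_i]$) is finite dimensional, and $\F_{\C_{\sqsubseteq_G},R}$ with values in $\,_R\mathrm{mod}$ is equivalent to the category of finitely generated $\Lambda$-modules. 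Since $R$ is a field and $F$ has finitely generated values, $\Res(F)$ corresponds to a finite-dimensional $\Lambda$-module, hence has finite length $\ell(F)$ and satisfies both chain conditions on subfunctors. I will also use two elementary facts: $\Res$ is exact, and it commutes with arbitrary intersections and sums of subfunctors (computed objectwise). In particular the sum of all subfunctors of $F$ annihilated by $\Res$ is again annihilated by $\Res$, so there is a largest one, which I denote $j(F)$; and if $j(F)=0$ then $F$ has no nonzero subfunctor with vanishing restriction.

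If $\ell(F)=0$ then $\Res(F)=0$ and $0=T_0\subseteq B_1=F$ is a composition series over $G$. Suppose $\ell(F)\geq 1$ and set $B_1:=j(F)$, so $\Res(B_1/T_0)=0$. Passing to $\bar F:=F/B_1$, exactness of $\Res$ gives $\Res(\bar F)\cong\Res(F)$, so $\ell(\bar F)=\ell(F)\geq 1$, and maximality of $B_1$ ensures $\bar F$ has no nonzero subfunctor with vanishing restriction. Now apply Zorn's lemma to the nonempty family of subfunctors $Q\subseteq\bar F$ with $\Res(Q)\neq 0$, ordered by reverse inclusion: for a descending chain, the restriction of its intersection equals the intersection of the restrictions, which is nonzero because a chain of subobjects of the finite-length object $\Res(\bar F)$ is finite and each of its members is nonzero. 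Let $P$ be a minimal element. Every proper subfunctor of $P$ then has vanishing restriction, hence so does their sum; since $\Res(P)\neq 0$, that sum is proper, so it is the unique maximal subfunctor of $P$, and being a subfunctor of $\bar F$ with vanishing restriction it is contained in $j(\bar F)=0$, hence it is zero. Therefore $P$ is a simple functor in $\F_{\D,R}$ with $\Res(P)\neq 0$ (indeed $\Res(P)$ is simple by Proposition~\ref{resSimpOrZero}, though only nonvanishing is needed). Let $T_1\subseteq F$ be the preimage of $P$ under $F\twoheadrightarrow\bar F$, so that $B_1\subset T_1$, the quotient $T_1/B_1\cong P$ is simple with $\Res(T_1/B_1)\neq 0$, and $\Res(T_1)\cong\Res(P)\neq 0$; the exact sequence $0\to\Res(T_1)\to\Res(F)\to\Res(F/T_1)\to 0$ then shows $\ell(F/T_1)<\ell(F)$. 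By the induction hypothesis $F/T_1$ has a composition series over $G$; pulling it back along $F\twoheadrightarrow F/T_1$ and prepending $0=T_0\subseteq B_1\subset T_1$ gives a chain of subfunctors of $F$. Its quotients above $T_1$ are identified, via the third isomorphism theorem, with the corresponding quotients of the series of $F/T_1$, and $\Res$ respects these identifications, so the simplicity and vanishing conditions hold for those steps; meanwhile $\Res(B_1/T_0)=\Res(j(F))=0$ and $T_1/B_1\cong P$ is simple with nonzero restriction. This is the desired composition series over $G$ of $F$.

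The main obstacle, and the place where the hypotheses are genuinely used, is the first paragraph together with the minimality argument in the second: finiteness of $\ell(F)$, and the fact that $\Res(\bar F)$ satisfies the descending chain condition (needed to produce the simple subfunctor $P$). Both rely essentially on $R$ being a field and on $F$ having finitely generated values, and fail otherwise; everything after the production of $P$ is bookkeeping with the exactness of $\Res$ and third-isomorphism-theorem identifications.
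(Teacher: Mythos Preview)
The paper does not give its own proof of this statement; it is quoted verbatim from \cite{Ba} (Baumann's thesis, Theorem~2.42) and used as a black box. So there is nothing in the paper to compare against, and the relevant question is whether your argument stands on its own.

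It does. The key observation---that $\F_{\C_{\sqsubseteq_G},R}$ with values in $\,_R\mathrm{mod}$ is equivalent to finite-dimensional modules over the finite-dimensional algebra $\Lambda$, so $\Res(F)$ has finite length---is exactly right, and the induction on $\ell(F)$ is cleanly set up. The two nontrivial steps (existence of a largest subfunctor $j(F)$ with vanishing restriction, and the Zorn-type extraction of a simple subfunctor $P\subseteq \bar F$ with nonvanishing restriction) are both justified: restriction is computed objectwise, so it commutes with arbitrary sums and intersections of subfunctors; and a chain of nonzero subobjects of a finite-length object has only finitely many distinct members, so its intersection is nonzero. Your verification that the unique maximal proper subfunctor of $P$ must lie in $j(\bar F)=0$ is correct and is the heart of the argument. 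The splicing step is routine.

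Two minor remarks. First, Zorn's lemma is more than you need: since $\Res(\bar F)$ has finite length, you can simply pick any $Q\subseteq\bar F$ with $\Res(Q)$ simple (such $Q$ exist by pulling back a simple sub of $\Res(\bar F)$ along the surjection from the subfunctor it generates), then take $P=j(Q)$'s complement-by-minimality inside $Q$ as you do. But your formulation is perfectly valid. Second, you are implicitly using that $\C_{\sqsubseteq_G}$ sits inside $\D$ as a full subcategory, which is guaranteed when $\D$ is replete (the standing hypothesis throughout the paper); this is worth saying explicitly.
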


Suppose $R$ is a field and $F$ is a biset functor on $\D$ with values in $\,_R\text{mod}$. If $S$ is a composition factor of $F$, then there is an object $G$ of $R\D$ and simple $R\text{Out}(G)$-
module $V$, such that $S\cong S_{G,V}$. It follows from Theorem~\ref{compSeriesExist} that $S$ will be a composition factor of $F$ over $G$. We 
can then define the \emph{multiplicity of $S$} as a composition factor of $F$ to be the number of times it shows up as a composition factor of $F$ over 
$G$. Note that choosing a different parametrization for $S\cong S_{G',V'}$ would result in the same multiplicity, since $G\cong G'$ and thus 
$\C_{\sqsubseteq_G}$ and $\C_{\sqsubseteq_{G'}}$ are the same subcategory of $\C$.

\section{Idempotents of RB(G,G)}
For this section, we will assume $\D$ is a replete subcategory of $\C$.
 
\begin{defn}[\cite{B1}, Definition 6.2.4]
Let $G$ be a finite group. If $N$ is a normal subgroup of $G$, let $f^G_N$ denote the element of $RB(G,G)$ defined by
\[f^G_N=\sum_{N\leq M\unlhd G}\mu_{\unlhd G}(N,M)[\Inf^G_{G/N}\times_{G/N}\Def_{G/N}^G]\]
where $\mu_{\unlhd G}$ is the M{\"o}bius function of the poset of normal subgroups of $G$.
\end{defn}

\begin{prop}[\cite{B1}, Proposition 6.2.7]
Let $G$ be a finite group. Then the elements $f^G_N$, for $N\unlhd G$, are mutually orthogonal idempotents of $RB(G,G)$ and
\[\sum_{N\unlhd G}f_N^G=\mathrm{Id}_G.\]
\end{prop}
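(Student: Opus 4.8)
The plan is to reduce the statement to one biset identity and then feed it into a Möbius-inversion computation over the lattice $\mathcal{N}(G)$ of normal subgroups of $G$. For $N\unlhd G$ write $m^G_N:=[\Inf^G_{G/N}\times_{G/N}\Def^G_{G/N}]\in RB(G,G)$, so that $f^G_N=\sum_{N\leq M\unlhd G}\mu_{\unlhd G}(N,M)\,m^G_M$ and, by Möbius inversion, $m^G_N=\sum_{N\leq M\unlhd G}f^G_M$; note that $m^G_1=[\Inf^G_{G/1}\times_{G/1}\Def^G_{G/1}]=[G]=\mathrm{Id}_G$. The crux is the multiplicativity relation
\[
m^G_N\times_G m^G_M=m^G_{NM}\qquad(N,M\unlhd G),
\]
where $NM$ is the join of $N$ and $M$ in $\mathcal{N}(G)$. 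To prove it I would expand the left side, using associativity of $\times$, as $\Inf^G_{G/N}\times_{G/N}\big(\Def^G_{G/N}\times_G\Inf^G_{G/M}\big)\times_{G/M}\Def^G_{G/M}$, compute the inner composite $\Def^G_{G/N}\times_G\Inf^G_{G/M}$ by describing it through the $G$-orbits on $G/N\times G/M$, and --- here using that $N$ and $M$ are normal --- identify it up to the canonical isomorphisms with $\Inf^{G/N}_{G/NM}\times_{G/NM}\Def^{G/M}_{G/NM}$. Substituting and then applying the transitivity of inflation and of deflation (Remark~\ref{ebisetTranCom}) collapses the product to $\Inf^G_{G/NM}\times_{G/NM}\Def^G_{G/NM}=m^G_{NM}$.

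Granting this, the rest is formal. First,
\[
\sum_{N\unlhd G}f^G_N=\sum_{M\unlhd G}m^G_M\!\!\sum_{1\leq N\leq M}\!\!\mu_{\unlhd G}(N,M)=m^G_1=\mathrm{Id}_G,
\]
since the inner sum over the whole interval from the trivial subgroup up to $M$ vanishes unless $M$ is trivial. For orthogonality and idempotency, multiply out using $m^G_A\times_G m^G_B=m^G_{AB}=\sum_{AB\leq C\unlhd G}f^G_C$ and the fact that, for normal subgroups, $AB\leq C$ if and only if $A\leq C$ and $B\leq C$:
\[
f^G_N\times_G f^G_M=\sum_{C\unlhd G}f^G_C\!\!\sum_{\substack{N\leq A\leq C\\ M\leq B\leq C}}\!\!\mu_{\unlhd G}(N,A)\,\mu_{\unlhd G}(M,B).
\]
The inner double sum factors as $\big(\sum_{N\leq A\leq C}\mu_{\unlhd G}(N,A)\big)\big(\sum_{M\leq B\leq C}\mu_{\unlhd G}(M,B)\big)=\delta_{N,C}\,\delta_{M,C}$, so $f^G_N\times_G f^G_M=\delta_{N,M}\,f^G_N$, as required.

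The one genuinely nontrivial ingredient is therefore the biset identity $m^G_N\times_G m^G_M=m^G_{NM}$, and within it the determination of $\Def^G_{G/N}\times_G\Inf^G_{G/M}$ as an inflation followed by a deflation through $G/NM$; normality of $N$ and $M$ is used precisely here, and again --- now visibly --- through the equivalence $AB\leq C\iff A\leq C\text{ and }B\leq C$ that makes $NM$ the join in $\mathcal{N}(G)$. Everything else is bookkeeping with the Möbius function of a finite lattice. Equivalently, one can read the multiplicativity relation as saying that $N\mapsto m^G_N$ is a homomorphism from the idempotent commutative monoid $(\mathcal{N}(G),\vee)$ into $(RB(G,G),\times_G)$, hence extends to a unital $R$-algebra map from the monoid algebra $R[\mathcal{N}(G),\vee]$, in which the elements $g_N:=\sum_{N\leq M\unlhd G}\mu_{\unlhd G}(N,M)[M]$ are the familiar orthogonal idempotents (their images under the evaluation embedding $R[\mathcal{N}(G),\vee]\hookrightarrow\prod_{L\unlhd G}R$ are the standard ones); pushing these forward along the algebra map recovers the $f^G_N$.
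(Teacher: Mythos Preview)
The paper does not supply its own proof of this proposition; it is quoted from \cite[Proposition~6.2.7]{B1}. Your argument is correct and is essentially the proof given there: Bouc establishes the same multiplicativity $m^G_N\times_G m^G_M=m^G_{NM}$ (via the identification of $\Def^G_{G/N}\times_G\Inf^G_{G/M}$ with the biset $G/NM$) and then performs the identical M\"obius-inversion bookkeeping over the lattice of normal subgroups.
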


\begin{nota}[\cite{B1}, Definition and Notation 6.3.1]
Let $F$ be a biset functor, and let $G$ be an object of $\D$. The set of \emph{faithful elements} of $F(G)$ is the $R$-submodule
\[\partial F(G)=f^G_1F(G)\]
of $F(G)$.
\end{nota}

If $F$ is a biset functor, then for any object $G$ of $\D$, we can use these idempotents to decompose $F(G)$ as an $R$-module. In particular, we have the following results.

\begin{lemma}[\cite{B1}, Proposition 6.3.2]
\label{faithEltChar}
Let $F$ be a biset functor and $G$ an object of $\D$.
\begin{enumerate}
\item If $1<N\unlhd G$, and $u\in F(G/N)$, then $f_1^G\Inf_{G/N}^Gu=0$.
\item $F(G)=\partial F(G)\oplus \sum_{1<N\unlhd G}\Im\Inf_{G/N}^G$.
\item $\partial F(G)=\bigcap_{1<N\unlhd G}\Ker\Def_{G/N}^G.$
\end{enumerate}
\end{lemma}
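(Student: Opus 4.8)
The plan is to obtain all three assertions from the orthogonal idempotent decomposition $\mathrm{Id}_G=\sum_{N\unlhd G}f_N^G$ already established, together with two pieces of bookkeeping. Writing $j_M:=\Inf_{G/M}^G\times_{G/M}\Def_{G/M}^G\in RB(G,G)$, the definition of $f_N^G$ is precisely $f_N^G=\sum_{N\leq M\unlhd G}\mu_{\unlhd G}(N,M)\,j_M$, and Möbius inversion over the poset of normal subgroups of $G$ gives the inverse relation $j_N=\sum_{N\leq M\unlhd G}f_M^G$. On a functor $F$ the operator $j_M$ acts on $F(G)$ as $x\mapsto\Inf_{G/M}^G(\Def_{G/M}^G x)$, so its image lies in $\Im\,\Inf_{G/M}^G$ and its kernel contains $\Ker\,\Def_{G/M}^G$. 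Since the $f_N^G$ are orthogonal idempotents summing to the identity, I start from the $R$-module decomposition $F(G)=\bigoplus_{N\unlhd G}f_N^G F(G)$, one of whose summands is $\partial F(G)=f_1^G F(G)$.

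For (1), I would use the elementary-biset relation $\Def_{G/N}^G\circ\Inf_{G/N}^G=\mathrm{Id}_{G/N}$ to see that $\Inf_{G/N}^G u$ is fixed by $j_N$; hence $f_1^G\Inf_{G/N}^G u=f_1^G\,j_N(\Inf_{G/N}^G u)=\sum_{N\leq M\unlhd G}f_1^G f_M^G(\Inf_{G/N}^G u)$, and because $N>1$ every $M$ occurring here is $\neq 1$, so orthogonality annihilates each term. Part (2) is then almost immediate: the decomposition above already reads $F(G)=f_1^G F(G)\oplus\bigoplus_{1<N\unlhd G}f_N^G F(G)$, so I only need to match $\sum_{1<N\unlhd G}f_N^G F(G)$ with $\sum_{1<N\unlhd G}\Im\,\Inf_{G/N}^G$. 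The inclusion ``$\subseteq$'' holds because $f_N^G$ (for $N>1$) is an $R$-combination of the $j_M$ with $M\geq N>1$, each having image in $\Im\,\Inf_{G/M}^G$; for ``$\supseteq$'', given $u\in F(G/N)$ with $N>1$ I would write $\Inf_{G/N}^G u=\sum_{M\unlhd G}f_M^G\Inf_{G/N}^G u$ and discard the $M=1$ summand by (1).

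For (3), the inclusion $\partial F(G)\subseteq\bigcap_{1<N\unlhd G}\Ker\,\Def_{G/N}^G$ I would get as the mirror of (1): applying the opposite-biset anti-automorphism of $RB(G,G)$ — which interchanges $\Inf_{G/N}^G$ with $\Def_{G/N}^G$ and fixes each $f_N^G$, the latter because the opposite of $j_M$ is $j_M$ itself — to the relation $f_1^G\Inf_{G/N}^G=0$ from (1) yields $\Def_{G/N}^G f_1^G=0$, so $\Def_{G/N}^G$ annihilates $\partial F(G)$. For the reverse inclusion I would take $x$ with $\Def_{G/N}^G x=0$ for all $N>1$, expand $x=\sum_{N\unlhd G}f_N^G x$, and observe that for $N>1$ one has $f_N^G x=\sum_{N\leq M\unlhd G}\mu_{\unlhd G}(N,M)\,\Inf_{G/M}^G(\Def_{G/M}^G x)=0$, since every such $M$ is $>1$; hence $x=f_1^G x\in\partial F(G)$.

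The only non-formal ingredients are the elementary-biset relations $\Def_{G/N}^G\circ\Inf_{G/N}^G=\mathrm{Id}_{G/N}$ and the effect of the opposite-biset involution on inflation and deflation, both standard and recorded in \cite{B1}; if one prefers to avoid the involution, the inclusion $\partial F(G)\subseteq\bigcap_{1<N}\Ker\Def_{G/N}^G$ can instead be obtained from the direct expansion $\Def_{G/N}^G f_1^G=\sum_{M\unlhd G}\mu_{\unlhd G}(1,M)\,\Inf^{G/N}_{G/NM}\circ\Def^G_{G/NM}$ (via the $\Def\circ\Inf$ commutation and transitivity of deflation), regrouping by $L=NM$ and invoking the one-line Möbius identity $\sum_{M\,:\,MN=L}\mu_{\unlhd G}(1,M)=0$ for $N>1$. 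I expect the one genuine care-point to be the orthogonality bookkeeping: checking at each step that every normal subgroup able to appear in the relevant sum is forced to be $>1$ as soon as $N>1$, which is precisely what makes the cross terms $f_1^G f_M^G$ (and the deflations $\Def_{G/M}^G x$) vanish.
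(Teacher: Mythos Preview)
Your argument is correct. The paper does not supply its own proof of this lemma but simply cites it from \cite{B1}, Proposition~6.3.2; your approach via the Möbius-inverted identity $j_N=\sum_{N\leq M}f_M^G$, the relation $\Def_{G/N}^G\circ\Inf_{G/N}^G=\mathrm{Id}_{G/N}$, and the opposite-biset involution is exactly the standard one and matches what is done in Bouc's book.
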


\begin{prop}[\cite{B1}, Proposition 6.3.3]
\label{normChar}
Let $F$ be a biset functor, and $G$ be an object of $\D$. Then the map $\delta:F(G)\to\oplus_{N\unlhd G}\partial F(G/N)$
defined by
\[\delta(u)=\bigoplus_{N\unlhd G}f_1^{G/N}\Def^G_{G/N}u\]
is an isomorphism of abelian groups. The inverse isomorphism is the map $\iota$ defined by
\[\iota\left(\bigoplus_{N\unlhd G}v_N\right)=\sum_{N\unlhd G}\Inf_{G/N}^Gv_N.\]
\end{prop}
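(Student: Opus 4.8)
The plan is to verify directly that $\delta$ and $\iota$ are mutually inverse homomorphisms of abelian groups. Both maps are visibly additive, being assembled from the additive operators $f_1^{G/N}$, $\Def^G_{G/N}$, $\Inf^G_{G/N}$ acting on $F$, and $\delta(u)$ clearly lies in $\bigoplus_{N\unlhd G}f_1^{G/N}F(G/N)=\bigoplus_{N\unlhd G}\partial F(G/N)$; so it suffices to prove $\iota\circ\delta=\mathrm{Id}_{F(G)}$ and $\delta\circ\iota=\mathrm{Id}$. First I would isolate two identities that already hold in the double Burnside ring (and hence act correctly on $F$): the relation $\Def^G_{G/N}\circ\Inf^G_{G/N}=\mathrm{Id}_{G/N}$, immediate from the definitions of the elementary bisets (the $G$-orbits of $(G/N)\times(G/N)$ are indexed by the product map to $G/N$); and the relation $f^G_N=\Inf^G_{G/N}\circ f_1^{G/N}\circ\Def^G_{G/N}$, where $f_1^{G/N}$ denotes the idempotent $f_1$ of the group $G/N$. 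For the latter I would expand $f_1^{G/N}$ by its Möbius-theoretic definition as a combination of the elements $\Inf^{G/N}_{(G/N)/(M/N)}\circ\Def^{G/N}_{(G/N)/(M/N)}$ over normal subgroups $M/N$ of $G/N$, then apply the transitivity relations $\Inf^G_{G/N}\circ\Inf^{G/N}_{G/M}=\Inf^G_{G/M}$ and $\Def^{G/N}_{G/M}\circ\Def^G_{G/N}=\Def^G_{G/M}$ of Remark~\ref{ebisetTranCom}, using that the interval $[N,M]$ in the poset of normal subgroups of $G$ is isomorphic to $[1,M/N]$ in that of $G/N$ so that the relevant values of $\mu_{\unlhd G}$ and $\mu_{\unlhd G/N}$ agree; this reproduces exactly the defining sum for $f^G_N$.

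Granting these, $\iota\circ\delta=\mathrm{Id}_{F(G)}$ is a one-line computation: for $u\in F(G)$,
\[
\iota(\delta(u))=\sum_{N\unlhd G}\Inf^G_{G/N}\,f_1^{G/N}\,\Def^G_{G/N}\,u=\sum_{N\unlhd G}f^G_N\,u=\Bigl(\sum_{N\unlhd G}f^G_N\Bigr)u=u .
\]
For $\delta\circ\iota=\mathrm{Id}$, given $\bigoplus_M v_M$ with $v_M\in\partial F(G/M)$ I would compute the $N$-component of $\delta\bigl(\iota(\bigoplus_M v_M)\bigr)$, namely $\sum_{M\unlhd G}f_1^{G/N}\,\Def^G_{G/N}\,\Inf^G_{G/M}\,v_M$. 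The $M=N$ summand equals $f_1^{G/N}v_N=v_N$ by the first identity above together with $v_N\in f_1^{G/N}F(G/N)$. To kill the off-diagonal summands I would invoke the commutation formula $\Def^G_{G/N}\circ\Inf^G_{G/M}=\Inf^{G/N}_{(G/N)/(MN/N)}\circ\Iso\circ\Def^{G/M}_{(G/M)/(MN/M)}$ (with the canonical isomorphism $(G/N)/(MN/N)\cong(G/M)/(MN/M)$), which like the first identity can be checked directly from the biset definitions, and split into cases: if $N\not\leq M$ then $MN/M\neq 1$, so $\Def^{G/M}_{(G/M)/(MN/M)}$ annihilates $v_M\in\partial F(G/M)$ by Lemma~\ref{faithEltChar}(3); if $N\leq M$ and $N\neq M$, the deflation factor is trivial and the summand becomes $f_1^{G/N}$ applied to an isomorphic copy of a proper inflation $\Inf^{G/N}_{(G/N)/(M/N)}$ into $F(G/N)$, which vanishes by Lemma~\ref{faithEltChar}(1) since $M/N\neq 1$. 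As $M\neq N$ always falls into one of these two cases, every off-diagonal term is zero and $\delta\circ\iota=\mathrm{Id}$.

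The step I expect to be the main obstacle is the operator identity $f^G_N=\Inf^G_{G/N}f_1^{G/N}\Def^G_{G/N}$: it is the one point where the Möbius-function definition of the idempotents must be combined with several transitivity relations at once, and it requires honest bookkeeping of the canonical isomorphisms $(G/N)/(M/N)\cong G/M$ that the notation suppresses. Everything else is short formal manipulation. Alternatively, one could repackage that identity together with $\Def^G_{G/N}\Inf^G_{G/N}=\mathrm{Id}$ and the orthogonality of the $f^G_N$ into the statement that $\Inf^G_{G/N}$ and $\Def^G_{G/N}$ restrict to mutually inverse isomorphisms between $\partial F(G/N)$ and $f^G_N F(G)$, whereupon $\delta$ and $\iota$ become simply the two directions of the induced isomorphism $F(G)=\bigoplus_{N\unlhd G} f^G_N F(G)\cong\bigoplus_{N\unlhd G}\partial F(G/N)$.
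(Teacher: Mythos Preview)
The paper does not supply its own proof of this proposition: it is quoted verbatim from Bouc's book (\cite{B1}, Proposition 6.3.3), and no argument appears in the present paper. So there is nothing in the paper to compare your proof against.

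That said, your proposal is a correct and complete proof, and it is essentially the standard argument. The two operator identities you isolate are exactly the right ones: $\Def^G_{G/N}\circ\Inf^G_{G/N}=\mathrm{Id}_{G/N}$ is immediate, and $f^G_N=\Inf^G_{G/N}\circ f_1^{G/N}\circ\Def^G_{G/N}$ follows by expanding $f_1^{G/N}$ and applying the transitivity relations of Remark~\ref{ebisetTranCom} together with the poset isomorphism $[N,M]_{\unlhd G}\cong[1,M/N]_{\unlhd G/N}$, as you indicate. (Note that the displayed definition of $f^G_N$ in the paper has a visible typo: the summand should read $\Inf^G_{G/M}\times_{G/M}\Def^G_{G/M}$, varying with $M$; you are tacitly using the correct formula.) Your computation of $\iota\circ\delta$ then reduces to $\sum_N f^G_N=\mathrm{Id}_G$, and your treatment of $\delta\circ\iota$ via the commutation relation $\Def^G_{G/N}\circ\Inf^G_{G/M}\cong\Inf^{G/N}_{G/MN}\circ\Iso\circ\Def^{G/M}_{G/MN}$ together with the two parts of Lemma~\ref{faithEltChar} is exactly right and the case split ($N\not\leq M$ versus $N<M$) is exhaustive. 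Your alternative repackaging at the end, recognizing $\Inf^G_{G/N}$ and $\Def^G_{G/N}$ as inverse isomorphisms between $\partial F(G/N)$ and $f^G_N F(G)$, is also how Bouc organizes the argument in \cite{B1}.
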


\section{The unit group of the Burnside ring as a biset functor}

If $R$ is a commutative ring and $\D$ is a replete subcategory, we define the \emph{Burnside functor} on $R\D$ to be $RB:=\Hom_{R\D}(1,-)$. If $R=\bZ$, then $B$ is equivalent to the functor
that takes an object $G$ of $\D$ to its Burnside ring.

Recall that a consequence of Burnside's Theorem is that the Burnside ring of a finite group $G$, $B(G)$, has an embedding into $\bZ^{|\sS_G|}$ with finite cokernel, 
where $\sS_G$ is a representative set of conjugacy classes of subgroups of $G$. This embedding is induced by the map that takes a finite $G$-set $X$, to the column vector 
$(|X^H|)_{H\in \sS_G}$. We abusively use the notation $|a^H|$ when $a\in B(G)$ and $H\in\sS_G$, to denote the value of this embedding in the $H$ entry. This embedding is called the 
\emph{ghost map} of $B(G)$ and its codomain, $Z^{|\sS_G|}$, is called \emph{the ghost ring} of $B(G)$. Throughout this paper, we identify $B(G)$ with its image in the ghost ring. 
When determining elements of $B(G)$, it is often useful to work inside the ghost ring.

It is useful to recall that if $H$ and $S$ are subgroups of $G$, then $|[G/H]^S|=0$ unless $gSg^{-1}\leqslant H$ for some $g\in G$ and
$|[G/H]^H|=[N_G(H):H]$.

One consequence of the embedding of $B(G)$ into its ghost ring is that $B(G)^\times$ is an elementary abelian $2$-group. Although, determining the exact number of elements is difficult, in general. However,
we may view the unit group of the Burnside ring as a biset functor over $\D$.  This gives us various maps between unit groups of Burnside rings.
For the paper at hand we will only need an explicit formulation of how these maps work between ghost rings of Burnside rings of finite groups. The theoretical groundwork for this biset functor can be found in Section $11.2$ of \cite{B1}.

\begin{defn}[\cite{B1}, 11.2.17]
Let $G$ and $H$ be finite groups and $U$ be a finite $(H,G)$-biset. For $u\in U$ and $L\leqslant H$, let 
\[L^u:=\{g\in G|\,\,\exists\,\, l\in L, \,\,l\cdot u=u\cdot g\}.\]
Then $L^u$ is a subgroup of $G$. If $a\in B(G)$, then define
\[T_U(a)=\left(\prod_{u\in[L\backslash U/G]}|a^{L^u}|\right)_{L\in \sS_H}\]
where $[L\backslash U/G]$ is a set of representatives of $L\backslash U/G$.
\end{defn}

\begin{theorem}[\cite{B1}, Corollary 11.2.21]
There exists a unique biset functor $B^\times$ on $\C$ such that $B^\times (G)=B(G)^\times$, for any finite group $G$, and 
\[B^\times(U)=T_U:B^\times(G)\to B^\times(H)\]
for any finite groups $G$ and $H$, and any finite $(H,G)$-biset $U$.
\end{theorem}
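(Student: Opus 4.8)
The plan is to construct the functor by hand and then read off existence and uniqueness. On objects, set $B^\times(G):=B(G)^\times$. On morphisms, note that $\Hom_\C(G,H)=B(H,G)$ is $\bZ$-free on the transitive $(H,G)$-bisets, so it suffices to define $B^\times(U):=T_U$ on a transitive biset $U$ and extend $\bZ$-linearly. To see this produces a biset functor with the prescribed values I must verify: (i) $T_U$ depends only on the isomorphism class of $U$ (clear, since the defining data $[L\backslash U/G]$ and the subgroups $L^u$ are isomorphism invariants); (ii) $T_U$ restricts to a group homomorphism $B^\times(G)\to B^\times(H)$ — this is the real content; (iii) $T_{U\sqcup U'}=T_U\cdot T_{U'}$, so $T$ is additive in $U$ once $B^\times$ is written additively as an $\bF_2$-vector space, legitimizing the linear extension and handling signs (all groups in sight have exponent $2$, so $T_{-[U]}=T_{[U]}$); (iv) $T_{[G]}=\mathrm{id}$; and (v) the composition law $T_{U\times_H V}=T_U\circ T_V$. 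Granting (i)--(v), $B^\times$ is a biset functor with $B^\times(U)=T_U$ by construction; and it is \emph{unique} because every morphism of $\C$ is a $\bZ$-combination of composites of the five elementary bisets (Remark~\ref{elementaryBisetPresentation}), so both the object map and the elementary-biset maps are forced, and a biset functor is determined by these together with functoriality.

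I would dispatch the formal items (ii$'$), (iii), (iv), (v) first. Multiplicativity of $T_U$ in its argument is immediate from $|(ab)^S|=|a^S|\,|b^S|$ and $|1^S|=1$; hence $T_U$ is in any case a homomorphism between the unit groups of the \emph{ghost} rings, $\{\pm1\}^{\sS_G}\to\{\pm1\}^{\sS_H}$, and the only remaining question in (ii) is whether $T_U(a)$ lands in $B(H)$ rather than merely its ghost ring. For (iv): taking $u=1$ one gets $L^1=L$ and $[L\backslash G/G]$ a single double coset, so $T_{[G]}(a)_L=|a^L|$. For (iii): $[L\backslash(U\sqcup U')/G]$ is the disjoint union of $[L\backslash U/G]$ and $[L\backslash U'/G]$, so the defining products split. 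For (v): for $w=(u,_Hv)$ in $U\times_H V$ and $L\leqslant K$ one unwinds the definitions to get $L^w=(L^u)^v$, and a transversal of $[L\backslash(U\times_H V)/G]$ is obtained by letting $u$ run over $[L\backslash U/H]$ and then $v$ over $[L^u\backslash V/G]$; substituting into the product formula yields $T_U(T_V(a))=T_{U\times_H V}(a)$. This is the biset-functor analogue of the fixed-point formula for composition of bisets. Finally, because each $T_U$ is a group homomorphism, composition of these maps is $\bZ$-bilinear in the additive notation, so (v) on transitive bisets propagates to all of $B(H,G)$; likewise (iii) lets the "preserves units" property propagate from transitive bisets to arbitrary elements.

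The crux is (ii), i.e. that $T_U$ carries a unit of $B(G)$ to an \emph{integral} element, one actually lying in $B(H)$. Since $T$ is multiplicative in the argument, additive in $U$, and multiplicative under composition, and since every transitive biset factors as a composite of the five elementary bisets (Remark~\ref{elementaryBisetPresentation}), it is enough to check $T_U(B^\times(G))\subseteq B^\times(H)$ for $U\in\{\Res_H^G,\Ind_H^G,\Inf_{G/N}^G,\Def_{G/N}^G,\Iso(f)\}$. Direct computation of the sets $L^u$ identifies $T_{\Res_H^G}$ with ordinary restriction and $T_{\Iso(f)}$ with transport of structure, both ring homomorphisms and hence unit-preserving; and $T_{\Inf_{G/N}^G}(a)_L=|a^{LN/N}|$, so $T_{\Inf}$ is the inflation ring homomorphism, again unit-preserving. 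That leaves $\Ind_H^G$, where $T_{\Ind}(a)_L=\prod_{u\in[L\backslash G/H]}|a^{H\cap\,{}^{u^{-1}}L}|$ is the mark vector of multiplicative (tensor) induction, and $\Def_{G/N}^G$, where $T_{\Def}(a)_L=|a^{\widehat L}|$ with $\widehat L$ the preimage of $L$ in $G$ — this "deflation of units" is genuinely different from the Burnside-ring deflation. For each of these two I must show the displayed mark vector is that of an honest element of $B(H)$ when $a\in B^\times(G)$: I would do this via the congruence criterion for membership in the Burnside ring inside its ghost ring (the marks of $a$ all being $\pm1$ keeps the congruences tractable), or for tensor induction by realizing the element through a $\mathrm{Map}_H(G,-)$-type construction applied to a virtual $H$-set representing $a$. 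I expect this last point — confining tensor induction and unit-deflation to $B(H)$ rather than its ghost ring — to be the only real obstacle; everything else is bookkeeping. This is exactly the route taken in Section~11.2 of \cite{B1}.
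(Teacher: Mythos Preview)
The paper does not prove this theorem; it is imported verbatim from \cite{B1}, Corollary~11.2.21, and the surrounding sentence (``The theoretical groundwork for this biset functor can be found in Section~11.2 of \cite{B1}'') explicitly defers the proof to that source. Your outline is essentially the argument carried out there and is sound: the additivity (iii), identity (iv), and composition (v) checks are straightforward unwindings, and reducing integrality (ii) to the five elementary bisets via Remark~\ref{elementaryBisetPresentation} is exactly the right move.

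One correction worth flagging: you group $\Def_{G/N}^G$ with $\Ind_H^G$ as a case where landing in $B(H)$ rather than its ghost ring is a genuine obstacle. It is not. As the paper records in Remark~\ref{tensorInductionComp}, $T_{\Def}$ is the $N$-fixed-points map $b\mapsto b^N$, and fixed points preserve both disjoint unions and cartesian products, so this is a \emph{ring} homomorphism $B(G)\to B(G/N)$ and hence carries units to units automatically. The only elementary biset for which integrality requires real work is $\Ind_H^G$, i.e.\ multiplicative (tensor) induction; that is the content of Proposition~11.2.20 in \cite{B1}. Also, your discussion of uniqueness is slightly overcomplicated: since the statement prescribes $B^\times(U)=T_U$ on every honest biset $U$ and a biset functor must be additive on $\Hom$-groups, the extension to virtual bisets is forced, so uniqueness is immediate once existence is established.
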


We then, of course, restrict $B^\times$ to the replete subcategory $\D$.
It is worth computing the maps $T_U$ when $U$ is an elementary biset.

\begin{rmk}\label{tensorInductionComp}
Suppose $G$ is a finite group, $H$ is a subgroup of $G$, and $N$ is a normal subgroup of $G$. If $U$ is either 
$\Res^G_{H}$ or  $\Inf^G_{G/N}$, then
$T_U$ is equal to the maps $B(U)$ restricted to their respective unit groups. For $U=\Ind^G_H$, we denote $\Ten^G_H:=T_U$. If $L$ is a subgroup of $G$ and
$x\in G$ then, viewing $x$ as a point in the $(G,H)$-biset $G$, we have $L^x=x^{-1}Lx\cap H$ and
\begin{equation}\label{tenInduct}
\Ten^G_H(a)=\left(\prod_{x\in[L\backslash G/H]}|a^{(x^{-1}Lx\cap H)}|\right)_{L\in \sS_G}
\end{equation}
for any $a\in B^\times(H)$. If $V=\Def_{G/N}^G$, then for any $b\in B^\times(G)$ and any subgroup $K/N$ of $G/N$, we have
\[|T_V(b)^{K/N}|=|b^K|=|(b^N)^{K/N}|,\]
where $b^N$ denotes the element in $B^\times(G/N)$ induced by taking $N$-fixed points on $G$-sets. 
Thus $T_V:B^\times(G)\to B^\times(G/N)$ is the map induced by taking $N$-fixed points on $G$-sets. In particular,
if $u\in \partial B^\times (G)$, then by Lemma~\ref{faithEltChar}, $u$ is in kernel of all nontrivial deflation maps. This happens if and only if $|u^K|=1$ for all
subgroups $K$ in $G$ that contain a nontrivial normal subgroup of $G$.
\end{rmk}

\section{groups with abelian subgroups of index $1$ or $2$}

Since $B(G)$ can be embedded into $\bZ^{|\sS_G|}$ and this embedding has finite cokernel, extending scalars by $\bQ$ we have
\[\bQ B(G):=\bQ\otimes_{\bZ} B(G)\cong\bQ^{|\sS_G|},\]
The following theorem gives a formula for
the primitive idempotents of $\bQ B(G)$ in terms of basis elements of $B(G)$. It was originally proved by Gluck (\cite{G}) and independently by Yoshida (\cite{Yo2}).

\begin{theorem}[\cite{B1}, Theorem 2.5]
\label{primidemQBG}
Let $G$ be a finite group. If $H$ is a subgroup of $G$, denote by $e^G_H$ the element of $\bQ B(G)$ defined by
\[e^G_H= \dfrac{1}{|N_G(H)|}\sum_{K\leq H}|K|\mu(K,H)[G/K],\]
where $\mu$ is the M\"obius function of the poset of subgroups of $G$. Then, $e^G_H$=$e^G_K$ if and only if $H=_GK$, and the elements
$e^G_H$ for $H\in \sS_G$ are the primitive idempotents of the $\bQ$-algebra $\bQ B(G)$. Moreover, for $H,K\in \sS_G$ one has $(e^G_H)^K=1$ if $K=H$
and $(e^G_H)^K=0$ if $H\neq K$.
\end{theorem}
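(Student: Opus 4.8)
The plan is to prove all three parts at once by a single computation of mark vectors inside the ghost ring. After extending scalars to $\bQ$ as recalled above, the ghost map is a $\bQ$-algebra isomorphism $\bQ B(G)\to\bQ^{|\sS_G|}$ carrying $a$ to $(|a^K|)_{K\in\sS_G}$, and the primitive idempotents of $\bQ^{|\sS_G|}$ are precisely the standard basis vectors $\varepsilon_K$, one for each $K\in\sS_G$. Hence it suffices to prove the final, mark-value assertion of the theorem: that the mark vector of $e^G_H$ equals the standard basis vector indexed by the conjugacy class of $H$. Granting this, each $e^G_H$ maps to a primitive idempotent of $\bQ^{|\sS_G|}$ and so is a primitive idempotent of $\bQ B(G)$; the ghost map being injective, $e^G_H=e^G_K$ holds iff the two basis vectors coincide iff $H=_GK$; and as $H$ ranges over $\sS_G$ we obtain every $\varepsilon_K$, hence every primitive idempotent of $\bQ B(G)$.

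The substance of the proof is thus the evaluation of $(e^G_H)^K=|(e^G_H)^K|$ for a subgroup $K\leq G$. I would start from the elementary identity
\[|[G/L]^K|=\frac{1}{|L|}\,\bigl|\{g\in G : g^{-1}Kg\leq L\}\bigr|,\]
valid for all subgroups $K,L\leq G$, which holds because $g\mapsto gL$ sends $\{g : g^{-1}Kg\leq L\}$ onto $[G/L]^K$ with every fibre of size $|L|$ (for $K=L=H$ it recovers $|[G/H]^H|=[N_G(H):H]$). Substituting the definition of $e^G_H$ into the $K$-th mark, the factors $|L|$ cancel and one obtains
\[(e^G_H)^K=\frac{1}{|N_G(H)|}\sum_{L\leq H}\mu(L,H)\,\bigl|\{g\in G : g^{-1}Kg\leq L\}\bigr|.\]
Interchanging the two sums, for each fixed $g$ with $g^{-1}Kg\leq H$ the inner sum runs over the interval $[\,g^{-1}Kg,\,H\,]$ of the subgroup lattice of $G$, and the defining relation of the M\"obius function gives $\sum_{A\leq L\leq H}\mu(L,H)=\delta_{A,H}$. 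So only those $g$ with $g^{-1}Kg=H$ contribute, leaving
\[(e^G_H)^K=\frac{1}{|N_G(H)|}\,\bigl|\{g\in G : g^{-1}Kg=H\}\bigr|.\]
If $K$ is not $G$-conjugate to $H$ this set is empty, so $(e^G_H)^K=0$; if $K=_GH$ it is a left coset of $N_G(H)$, hence of cardinality $|N_G(H)|$, so $(e^G_H)^K=1$. This is exactly the asserted mark vector, and the theorem follows from the first paragraph.

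I do not expect a genuine obstacle here; the argument is one careful bookkeeping computation. The two points demanding attention are invoking the correct M\"obius identity --- it is the one with the upper endpoint fixed, $\sum_{A\leq L\leq H}\mu(L,H)=\delta_{A,H}$, that is needed --- and keeping the translation between counting elements $g\in G$ and counting cosets and conjugates precise, so that the orders of $N_G(H)$ cancel exactly and no spurious index such as $[N_G(H):H]$ creeps in.
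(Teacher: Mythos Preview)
Your proof is correct. Note, however, that the paper does not prove this theorem at all: it is quoted from Bouc's book \cite{B1} and attributed there to Gluck \cite{G} and Yoshida \cite{Yo2}. Your argument---computing $(e^G_H)^K$ directly from the mark formula $|[G/L]^K|=|\{g\in G: g^{-1}Kg\leq L\}|/|L|$, interchanging sums, and collapsing the inner sum via the M\"obius relation $\sum_{A\leq L\leq H}\mu(L,H)=\delta_{A,H}$---is exactly the standard proof found in those references, so there is nothing to compare against in the present paper.
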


Notice that for any element $u\in \bQ B(G)$, one has $ue^G_H=|u^H|e^G_H$, hence $u=\sum_{H\in \sS_G}|u^H|e^G_H$.
For $u\in B^\times(G)$, we set $[F_u]:=\{S\in\sS_G||u^H|=-1\}.$
If $u=[G/G]$, then $[F_u]$ is the empty set. We then can write
\begin{equation}\label{unitBreakdown}
u=[G/G]-\sum_{K\in [F_u]}2e^G_K.
\end{equation}

\begin{lemma}[\cite{B2}, Lemma 6.8]
\label{faithCentBound}
Let $G$ be a finite group with $|Z(G)|>2$, then $\partial B^\times(G)$ is trivial.
\end{lemma}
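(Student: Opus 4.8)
The plan is to show that a faithful unit $u \in \partial B^\times(G)$ must be the identity element $[G/G]$ whenever $Z(G)$ has order greater than $2$. By Remark~\ref{tensorInductionComp}, the condition $u \in \partial B^\times(G)$ means precisely that $|u^K| = 1$ for every subgroup $K \leqslant G$ that contains a nontrivial normal subgroup of $G$. I would therefore aim to argue that every subgroup $K$ with $|u^K| = -1$ leads to a contradiction with $|Z(G)| > 2$.

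First I would recall how $T_U$ interacts with restriction: restriction along an inclusion is a ring homomorphism on Burnside rings, and on the level of the ghost ring it is just the map $(|a^S|)_{S \in \sS_G} \mapsto (|a^S|)_{S \in \sS_H}$ for $H \leqslant G$. The key move is to restrict $u$ to the subgroup $H := \langle K, Z(G) \rangle$ (or simply to a subgroup containing both $K$ and a central element of order $>2$), or better yet to exploit centrality directly. Here is the cleaner route: if $K \leqslant G$ and $z \in Z(G)$, then $Kz^i$ (cosets, but more usefully the subgroups $\langle K, z\rangle$) — the real point is that a central subgroup $Z \leqslant Z(G)$ with $|Z| > 2$ is normal in $G$, so for any subgroup $K$ with $Z \not\leqslant K$ we can consider $KZ$. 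I would use the fact that for a central element $z$ of order $>2$, the subgroup $\langle z \rangle$ is normal, hence if $\langle z\rangle \leqslant K$ then $|u^K|=1$ by faithfulness. So I only need to handle subgroups $K$ not containing such central subgroups.

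The heart of the argument should be a fixed-point counting identity. Suppose $|u^K| = -1$ for some $K$, so $K \in [F_u]$ and by \eqref{unitBreakdown} we have $u = [G/G] - \sum_{L \in [F_u]} 2e^G_L$. I would pick $K \in [F_u]$ minimal (with respect to inclusion, or conjugation-inclusion) in $[F_u]$; then I would restrict attention to the subgroup $H = KZ(G)$ and use the multiplicativity of $u$ under products of $G$-sets together with the formula $|[G/L]^S| = 0$ unless $S$ is $G$-subconjugate into $L$. More concretely, the standard trick (as in Bouc's proof for $p$-groups) is to evaluate $u$ at $H$ and at $HK' $-type subgroups and use that $u^2 = [G/G]$ forces the vector of signs $(|u^S|)_S$ to be "multiplicative" in a way incompatible with a large center: specifically, one shows that if $z \in Z(G)$ has order $>2$ then the assignment $S \mapsto |u^S|$ must be constant on the family $\{S, S\langle z\rangle, S\langle z^2\rangle, \dots\}$ for cyclic-shift reasons coming from $\langle z \rangle$ being normal, yet $S\langle z\rangle$ contains the nontrivial normal subgroup $\langle z \rangle$ and hence has sign $+1$ — forcing $|u^S| = 1$ too once one controls the $\langle z \rangle$-orbit structure on $G/S$.

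\textbf{Main obstacle.} The delicate step is the last one: passing from "$|u^{S\langle z\rangle}| = 1$" back to "$|u^S| = 1$." This requires a genuine fixed-point computation relating $|u^S|$ and $|u^{S\langle z \rangle}|$ — one expects a congruence of the form $|u^S| \equiv |u^{S\langle z\rangle}|^{\,[{\langle z\rangle}:{\langle z\rangle \cap S}]}$ or an Euler-characteristic-style identity modulo $2$, and the hypothesis $|Z(G)| > 2$ (rather than $>1$) is exactly what is needed to get the index or the relevant orbit count to be even, killing the sign. I would look to mimic Lemma 6.8's original proof in \cite{B2}, checking that the only place $2$-group-ness was used is in this congruence and that it goes through verbatim using only that $\langle z\rangle$ is a nontrivial normal subgroup of order $>2$; the risk is that some step implicitly used that $G$ (or $\langle z \rangle$) is a $2$-group, in which case I would need to separately handle the case where $z$ has odd order versus order a power of $2$ exceeding $2$, using in the odd case that an odd-order normal subgroup forces even orbit counts on $\langle z\rangle \backslash G/S$ for parity reasons, and in the $2$-power case the index $[\langle z\rangle : \langle z\rangle \cap S] \geqslant 2$ whenever $\langle z \rangle \not\leqslant S$.
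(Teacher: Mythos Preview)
The paper does not give its own proof of this lemma; it simply cites Bouc \cite{B2}. Bouc's argument there is much shorter than what you are sketching, and in fact you have all the ingredients for it in your second paragraph before you abandon them. After writing $u=[G/G]-2\sum_{L\in[F_u]}e^G_L$ via \eqref{unitBreakdown}, pick $K\in[F_u]$ \emph{maximal} (not minimal): by Theorem~\ref{primidemQBG} the coefficient of $[G/K]$ in $u$ is then exactly $-2/[N_G(K):K]$, since no other $e^G_L$ with $L\in[F_u]$ contributes to $[G/K]$. Because $K\in[F_u]$ contains no nontrivial normal subgroup of $G$, we have $K\cap Z(G)=\{1\}$; but $Z(G)\leqslant N_G(K)$, so $[N_G(K):K]\geqslant |Z(G)|>2$ and the coefficient is not an integer. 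This contradiction forces $[F_u]=\emptyset$ and $u=[G/G]$. (The paper uses this exact manoeuvre later, in the proof of Proposition~\ref{abelianSubgroup}.) Your choice of a \emph{minimal} $K$ is the wrong direction: minimality does not isolate a single idempotent contribution to $[G/K]$.

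Your alternative congruence route can be made rigorous, but not with the vague congruence you wrote down. The clean version is: for any $S\leqslant G$ and any central $z$, set $T=S\langle z\rangle$; then $S\unlhd T$ and $\Def^{T}_{T/S}\circ\Res^G_{T}(u)$ is a unit of the cyclic group $T/S\cong\langle z\rangle/(\langle z\rangle\cap S)$ whose mark at the subgroup $X/S$ is $|u^X|$. If $\langle z\rangle$ has odd order then $B^\times(T/S)=\{\pm1\}$, so all marks agree and $|u^S|=|u^{T}|=1$. If $\langle z\rangle\cong C_4$ then either $\langle z^2\rangle\leqslant S$ (and already $|u^S|=1$) or $T/S\cong C_4$, and one checks directly that every unit of $B^\times(C_4)$ has equal marks at the trivial subgroup and the subgroup of order~$2$, giving $|u^S|=|u^{S\langle z^2\rangle}|=1$. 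This works, but it is longer than Bouc's three-line integrality argument, and your proposal as written stops short of actually identifying either of these two cases.
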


We immediately obtain the following corollary.

\begin{cor}
\label{trivialFaithAb}
Let $G$ be an abelian group, then $\partial B^\times(G)$ is trivial if $|G|>2$.
\end{cor}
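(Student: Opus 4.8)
The plan is to deduce Corollary~\ref{trivialFaithAb} directly from Lemma~\ref{faithCentBound}, since for an abelian group the center is the whole group. First I would observe that if $G$ is abelian then $Z(G) = G$, so the hypothesis $|G| > 2$ is exactly the hypothesis $|Z(G)| > 2$ required by Lemma~\ref{faithCentBound}. Applying that lemma immediately yields that $\partial B^\times(G)$ is trivial, which is the claim.

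The only case deserving a separate word is when $|G| \le 2$, but the corollary does not assert anything there, so there is nothing to check; the statement is vacuous outside the range $|G| > 2$. Thus the entire argument is: \emph{abelian} $\Rightarrow$ $G = Z(G)$, hence $|G| > 2 \Rightarrow |Z(G)| > 2$, hence $\partial B^\times(G) = \{1\}$ by Lemma~\ref{faithCentBound}.

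There is essentially no obstacle here — the corollary is a one-line specialization. If one wanted to be slightly more self-contained (in case a reader wants to avoid invoking Lemma~\ref{faithCentBound}'s proof), an alternative route would be to use Remark~\ref{tensorInductionComp}: for $u \in \partial B^\times(G)$ we have $|u^K| = 1$ for every subgroup $K$ containing a nontrivial normal subgroup of $G$; when $G$ is abelian \emph{every} nontrivial subgroup is normal, so $|u^K| = 1$ for all $K \neq 1$, and then $|u^1| = 1$ as well because the ghost-map image of a unit has all coordinates $\pm 1$ and the coordinates at $\{1\}$ and at any cyclic subgroup are forced to agree via the relation coming from the virtual $G$-set structure (equivalently, $|u^1|$ equals the ordinary cardinality mod $2$ of the underlying set, which is determined by the other fixed-point counts through the congruences defining $B(G)$ inside its ghost ring). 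But invoking Lemma~\ref{faithCentBound} is cleaner, so that is the route I would take for the write-up.
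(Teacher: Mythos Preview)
Your main argument is correct and is exactly the paper's approach: the corollary is stated as an immediate consequence of Lemma~\ref{faithCentBound}, using $Z(G)=G$ for abelian $G$. The alternative route you sketch is unnecessary (and its final step about $|u^1|$ is a bit handwavy), but since you correctly identify Lemma~\ref{faithCentBound} as the intended one-line proof, there is nothing to add.
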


We will make use of Lemma~\ref{faithCentBound} to determine $\partial B^\times(G)$ for groups $G$
with abelian subgroups of index $2$. We begin with a reduction lemma.

\begin{lemma}
\label{C2action}
Suppose $C_2$ acts on an abelian group $G$ by automorphisms.
Let $x$ denote the generator for $C_2$. Suppose $|G^{C_2}|\leq2$. Then $x$ acts
by inversion on the odd part $G_{2'}$ of $G$, and one of the following hold:
\begin{enumerate}
\item[(i)] $G_2$ is cyclic and $x$ acts by inversion on $G_2$.
\item[(ii)] $G_2$ is cyclic of order $2^n\geq8$ and if $g\in G_2$, then $\,^xg=g^{2^{n-1}-1}$.
\item[(iii)] $G_2\cong C_{2^n}\times C_2=\langle a,b| a^{2^n}=b^2=[a,b]=1\rangle$, with $^xb=a^{2^{n-1}}b$ and $^xa=a^eb^\varepsilon$, with 
$e\in\{2^{n}-1,2^{n-1}-1\}$ and $\varepsilon\in\{0,1\}$.
\end{enumerate}
\end{lemma}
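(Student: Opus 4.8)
The plan is to reduce to the prime-by-prime structure of $G$ and handle the $2$-part by a case analysis on which abelian groups of small rank admit a $C_2$-action with a small fixed subgroup. First I would split $G=G_2\times G_{2'}$ into its Sylow $2$-subgroup and its $2$-complement (which exist and are characteristic since $G$ is abelian), and observe that the $C_2$-action respects this decomposition, so $G^{C_2}=(G_2)^{C_2}\times(G_{2'})^{C_2}$. The hypothesis $|G^{C_2}|\le 2$ then forces $(G_{2'})^{C_2}=1$. For an automorphism $\sigma=\,^x(-)$ of order (dividing) $2$ of an abelian group $A$ of odd order with no nonzero fixed points, one shows $\sigma$ is inversion: the endomorphism $1+\sigma$ of $A$ has image in $A^{C_2}=0$ (since $\sigma(a+\sigma a)=\sigma a + a$), so $\sigma a=-a$ for all $a$. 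This disposes of the odd part and the sentence ``$x$ acts by inversion on $G_{2'}$.''

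Next I would analyze the $2$-group $P:=G_2$ with its order-$\le 2$ automorphism $x$ satisfying $|P^{C_2}|\le 2$. The key structural input is that an abelian group with an automorphism of order $2$ whose fixed points form a group of order at most $2$ must have very small rank. Indeed, if $P=\prod_{i} C_{2^{a_i}}$ with at least two nontrivial factors of order $\ge 4$, then I claim one can produce too many fixed points: after passing to $P/2^{\min a_i -1}P$-type quotients or to the subgroup of elements of order dividing some $2^k$, a rank-$\ge 3$ elementary abelian section appears on which any involutory automorphism fixes a subgroup of order $\ge 4$ (an involution on $\mathbb{F}_2^3$ always has a fixed space of dimension $\ge 2$, being unipotent; more generally an involution on $\mathbb{F}_2^n$ fixes a subspace of dimension $\ge \lceil n/2\rceil$). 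So the rank of $P$ is at most $2$, and if the rank is $2$ the second factor can only be $C_2$. This isolates exactly the three shapes in the statement: $P$ cyclic (cases (i),(ii)), or $P\cong C_{2^n}\times C_2$ (case (iii)).

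Then I would enumerate, in each of these two shapes, the involutory automorphisms with small fixed subgroup. For $P=C_{2^n}=\langle g\rangle$: $\mathrm{Aut}(P)\cong (\mathbb{Z}/2^n)^\times$, and an automorphism $g\mapsto g^e$ has order dividing $2$ iff $e^2\equiv 1\pmod{2^n}$, i.e. $e\in\{1,\,2^n-1,\,2^{n-1}-1,\,2^{n-1}+1\}$ (for $n\ge 3$; fewer for small $n$). Computing $|P^{\langle x\rangle}|=\gcd(e-1,2^n)$ rules out $e=1$ and $e=2^{n-1}+1$ (the latter gives fixed subgroup of order $2^{n-1}\ge 4$ once $n\ge 3$), leaving inversion $e=2^n-1$ (case (i)) and $e=2^{n-1}-1$, which needs $n\ge 3$ to be distinct from inversion and has $\gcd(2^{n-1}-2,2^n)=2$, giving case (ii). For $P=\langle a,b\mid a^{2^n}=b^2=[a,b]=1\rangle$: an involutory automorphism is determined by $\,^xa=a^eb^\varepsilon$, $\,^xb=a^{2^{n-1}\delta}b$ (the only nontrivial element of order $2$ in $\langle a\rangle$ being $a^{2^{n-1}}$, and $\,^xb$ must be an involution not in $\langle a\rangle$ unless $\,^xb=b$); imposing $x^2=1$ forces $e^2\equiv 1$ and a relation between $e,\varepsilon,\delta$, and imposing $|P^{C_2}|\le 2$ forces $\delta=1$ and $e\in\{2^n-1,2^{n-1}-1\}$, $\varepsilon\in\{0,1\}$, which is exactly case (iii). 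I expect the main obstacle to be the bookkeeping in this last case — verifying that $\,^xb=a^{2^{n-1}}b$ (rather than $\,^xb=b$, which would give $|P^{C_2}|\ge 2\cdot|{\langle a\rangle}^{\langle x\rangle}|$, too big once combined with any fixed points of $x$ on $\langle a\rangle$) and that no further constraint from $x^2=\mathrm{id}$ is lost — together with cleanly handling the small-$n$ boundary cases ($n\le 2$) so that they fold into case (i). The rank bound argument via involutions on $\mathbb{F}_2$-spaces is the conceptual crux that keeps the case analysis finite.
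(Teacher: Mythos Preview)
Your overall plan matches the paper's: split off the odd part using the observation that $g\cdot{}^xg$ is always a fixed point (you phrase this additively as $\mathrm{Im}(1+\sigma)\subseteq G^{C_2}$), bound the rank of $G_2$ via the induced involution on the socle $G_2[2]$, then enumerate involutory automorphisms case by case. The socle argument correctly gives $2$-rank at most $2$, and your treatment of the cyclic and $C_{2^n}\times C_2$ cases is sound (if more computational than necessary).

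The gap is the sentence ``if the rank is $2$ the second factor can only be $C_2$.'' Your justification is that two cyclic factors of order at least $4$ force a rank-$3$ elementary abelian section, but this is false: $C_4\times C_4$ has $2$-rank $2$ and every elementary abelian section of it has rank at most $2$. So the socle argument alone does not rule out $G_2\cong C_{2^n}\times C_{2^m}$ with $n,m\ge 2$. The paper closes this by reusing the very norm-map idea you already applied to $G_{2'}$: for any $g$, the element ${}^xg\cdot g$ lies in $G^{C_2}$ and hence squares to $1$. Applied to a generator $a$ of the $C_{2^n}$ factor this gives $({}^xa\cdot a)^2=1$, which forces ${}^x(a^2)=a^{-2}$; in particular the involution $a^{2^{n-1}}\in\langle a^2\rangle$ is fixed once $n\ge 2$. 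The same reasoning on a generator $b$ of the other factor shows $b^{2^{m-1}}$ is fixed once $m\ge 2$. These two independent involutions would give $|G^{C_2}|\ge 4$, so $m\le 1$. With this in hand your case~(iii) analysis goes through, and the same constraint $({}^xa\cdot a)^2=1$ already yields $e\in\{2^n-1,\,2^{n-1}-1\}$ without ever listing $\mathrm{Aut}(P)$.
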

\begin{proof}
Let $g\in G$ and set $z=\,^xgg$. Note that $z\in G^{C_2}$. If $g$ has odd order, then so does $z$. Thus $x$ acts by inversion on $g$, otherwise $|G^{C_2}|>2$. If $G_2$ is cyclic of order $2^n$, choose $g$ to be a generator and write $^xg=g^d$, with $1\leq d\leq 2^n$. 
We may assume that $z^2=1$, hence $2^n$ divides $2(d+1)$. This implies that $d=2^{n}-1$ or $d=2^{n-1}-1$, which correspond to cases $(i)$ and $(ii)$ respectively.

Now assume that $G_2$ is not cyclic. If $G_2\cong V_4$ then the statement follows by the constraint that $|G^{C_2}|\leq 2$. Assume then
that $G_2\ncong V_4$. Let $P$ be the subgroup of $G_2$ generated by elements of order $2$. In particular, $P$ is an $\bF_2$ vector space
of dimension equal to the $2$-rank of $G_2$. Let $T_x$ denote the linear transformation on $P$ induced by the action of $x$ on $G$. Then 
$T_x$ has minimal polynomial dividing $(\lambda+1)^2$. Thus, if $P$ has dimension greater than $2$, the eigenspace for $\lambda=1$ has
at least dimension $2$, thus $|G^{C_2}|>2$. So we may assume that 
$G_2= C_{2^n}\times C_{2^m}=\langle a,b \,|\, a^{2^n}=b^{2^m}=[a,b]=1\rangle$, with
$n>1$. Write $^xa=a^eb^\varepsilon$. If $z=\,^xaa$, then we may again assume that $z^2=1$. Thus $e\in\{2^{n}-1,2^{n-1}-1\}$, and 
$b^{2\varepsilon}=1$. Thus $^x(a^2)\in \langle a^2\rangle$ and $a^{2^{n-1}}\in G^{C_2}$. Similarly, we get $b^{2^{m-1}}\in G^{C_2}$. Thus if
$m>1$ or $^xb=b$, we have $|G^{C_2}|>2$. The result follows.
\end{proof}

Recall from the end of Remark~\ref{tensorInductionComp} that an element $u\in B^\times(G)$ is a faithful element if and only if $|u^K|=1$ for any $K\leqslant G$ that contains a nontrivial normal subgroup of $G$. We will use this without citation through the end of the section.

\begin{lemma}
\label{PhiLemma}
Let $G=C_2\ltimes N$ where $N$ is an abelian group of order greater than $2$. Suppose $C_2$ acts on $N$ by inversion and that $N_2$ is cyclic.
Then $\partial B^\times(G)$ has $\bF_2$-dimension $1$. More precisely: If $N_2$ is trivial, then $\partial B^\times(G)$
is generated by the element
\[\Phi_{G}=[G/G]+[G/1]-2[G/I],\]
where $I$ is a representative of the unique conjugacy class of subgroups of order $2$ in $G$.
If $N_2$ is nontrivial, then $\partial B^\times(G)$ is generated by the element
\[\Phi_{G}=[G/G]+[G/1]-[G/I]-[G/J],\]
where $I$ and $J$ are representatives of the two non-central conjugacy classes of subgroups of order $2$.

\end{lemma}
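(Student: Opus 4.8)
The plan is to compute the ghost vector of an arbitrary $u\in\partial B^\times(G)$ by hand, combining the faithfulness criterion recalled just after Remark~\ref{tensorInductionComp} (namely $|u^K|=1$ whenever $K$ contains a nontrivial normal subgroup of $G$) with the congruences that cut out $B(G)$ inside its ghost ring. Note first that $Z(G)$ consists of the elements of $N$ that are inverted, i.e. $Z(G)=N[2]$, which has order $1$ or $2$ since $N_2$ is cyclic; so Lemma~\ref{faithCentBound} does not apply and a direct argument is needed. I would begin by describing the subgroups of $G=\langle x\rangle\ltimes N$. Every subgroup of $N$ is normal in $G$, since conjugation by $x$ inverts $N$ and hence stabilizes each subgroup of $N$. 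Because $x$ inverts $N$ we have $(xn)^2=1$ for every $n\in N$, so every element of $G\setminus N$ is an involution and every subgroup $H\not\leq N$ has the form $H=(H\cap N)\rtimes\langle xn\rangle$. Conjugating $\langle xn\rangle$ by $n'\in N$ gives $\langle xnn'^{-2}\rangle$ and conjugating by $x$ gives $\langle xn^{-1}\rangle$; hence, under the bijection $n\mapsto\langle xn\rangle$, the $G$-conjugacy classes of order-$2$ subgroups outside $N$ correspond to the cosets of $N^2$ in $N$, of which there are $|N:N^2|=|N_2:N_2^2|$, namely one when $N_2$ is trivial and two, say $I$ and $J$, when $N_2$ is cyclic nontrivial (these being exactly the non-central classes of order-$2$ subgroups, the central one, when $N_2\neq 1$, being $N[2]\leq N$). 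Since $|N|>2$ we have $N^2\neq 1$, so these subgroups are not normal and contain no nontrivial normal subgroup of $G$, whereas every other subgroup does: those inside $N$ are themselves normal, $G$ contains $N$, and an $H\not\leq N$ with $|H|>2$ has $1\neq H\cap N\unlhd G$. Therefore, for $u\in\partial B^\times(G)$, one has $|u^K|=1$ for every $K\in\sS_G$ except possibly $K=1$ and $K\in\{I,J\}$ (resp. $K=1$ and $K=I$).

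Next I would prove the upper bound $|\partial B^\times(G)|\leq 2$ using the orbit-counting congruence $\sum_{g\in G}|u^{\langle g\rangle}|\equiv 0\pmod{|G|}$, valid for any element of $B(G)$. The contributions are $|u^1|$ from $g=1$, a total of $|N|-1$ from the nontrivial elements of $N$, and, since the $|N|$ elements of $xN$ give order-$2$ subgroups outside $N$ whose $G$-orbits are the $|N:N^2|$ cosets of $N^2$ (each of size $|N^2|$), a total of $|N|\,|u^I|$ when $N_2=1$ and $\tfrac{|N|}{2}(|u^I|+|u^J|)$ when $N_2\neq 1$. Working modulo $|G|=2|N|$ and using $|u^I|,|u^J|\in\{\pm 1\}$, one has $|N|\,|u^I|\equiv|N|$, and $\tfrac{|N|}{2}(|u^I|+|u^J|)$ is $\equiv|N|$ if $|u^I|=|u^J|$ and $\equiv 0$ otherwise. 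Feeding this back into the congruence forces $|u^1|=1$ in every case; moreover, when $N_2\neq 1$, the possibility $|u^I|\neq|u^J|$ would give $|u^1|+|N|-1\equiv 0\pmod{2|N|}$, which is impossible for $|N|>2$, so in fact $|u^I|=|u^J|$. Thus $u$ is determined by a single sign, and $|\partial B^\times(G)|\leq 2$.

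For the matching lower bound I would check directly that $\Phi_G$ is a faithful unit distinct from $[G/G]$. As $\Phi_G$ is visibly a $\bZ$-combination of transitive $G$-sets it already lies in $B(G)$, so it suffices to compute its marks, using $|[G/1]^K|=|G|$ if $K=1$ and $0$ otherwise, $|[G/L]^K|=|G:L|$ if $K=1$, $|[G/L]^K|=0$ unless a conjugate of $K$ lies in $L$, and $|[G/L]^K|=|N_G(L):L|$ when $K=_G L$, together with $N_G(I)=C_G(x)=\langle x\rangle\times N[2]$ (and the analogous computation for $J$). A short calculation then shows that the ghost vector of $\Phi_G$ equals $1$ at every $K\in\sS_G$ except $-1$ at the class $I$ (and at $J$ when $N_2\neq 1$); hence $\Phi_G$ is a unit, is faithful, and is not $[G/G]$. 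Combined with the previous paragraph this gives $\partial B^\times(G)=\langle\Phi_G\rangle$, of $\bF_2$-dimension $1$. The only real friction I anticipate is the modular bookkeeping in the congruence step — in particular excluding $|u^I|\neq|u^J|$ when $N_2\neq 1$ — which in the end turns entirely on the hypothesis $|N|>2$.
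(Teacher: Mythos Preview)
Your argument is correct. Both proofs begin with the same subgroup analysis, showing that for a faithful unit $u$ the mark $|u^K|$ can be $-1$ only at $K=1$ and at the noncentral involution classes. From there, however, the routes diverge. The paper invokes the Gluck--Yoshida idempotent formula (Theorem~\ref{primidemQBG} and Equation~(\ref{unitBreakdown})) to write $u=[G/G]+\alpha[G/1]+\beta[G/I]\,(+\gamma[G/J])$ with integer coefficients, and then solves the small system obtained by evaluating marks at $I$, $J$, and $1$; this simultaneously produces the upper bound and the explicit generator. You instead bound $|\partial B^\times(G)|$ from above using only the Cauchy--Frobenius congruence $\sum_{g\in G}|u^{\langle g\rangle}|\equiv 0\pmod{|G|}$ at the trivial subgroup, and then verify the lower bound by a direct mark computation for $\Phi_G$. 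Your approach is a bit more elementary in that it avoids the rational idempotents entirely; the paper's approach is more systematic and ties directly into the machinery (Theorem~\ref{primidemQBG}) already set up. The modular bookkeeping you flag as the delicate point---ruling out $|u^I|\neq|u^J|$ when $N_2\neq 1$---is handled correctly and, as you note, rests exactly on $|N|>2$.
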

\begin{proof}
Let $x$ be the generator of $C_2$ and let $u\in \partial B^\times(G)$. Then $|u^S|=1$ for any subgroup $S\leqslant G$ which contains a nontrivial normal subgroup. 
Note that any subgroup of $N$ is normal in $G$. Let $S\leqslant G$ with $|u^S|=-1$. Then $S\cap N$ is trivial.
This means that $S$ is a noncentral subgroup of order $2$ or trivial.

Suppose $N_2$ is trivial. By Sylow's theorem, $G$ has a unique conjugacy class of subgroups of order $2$. Let $I=\langle x\rangle$. By 
Theorem~\ref{primidemQBG} and Equation~\ref{unitBreakdown} we may write
\[u=[G/G]+\alpha[G/1]+\beta[G/I],\] 
with $\alpha,\beta\in \bZ$. 
Since $Z(G)$ is trivial, $I$ is self-normalizing. Taking fixed points of $I$ and the trivial subgroup, we get $\beta+1=\pm1$ and $\frac{|G|}{2}\beta+|G|\alpha+1=\pm1$. 
The only integer solutions are $\alpha=\beta=0$ and $\alpha=1$ and $\beta=-2$.

Suppose that $N_2$ is nontrivial and cyclic. Let $y$ denote the generator of $N_2$. Set $I=\langle x\rangle$ and $J=\langle xy\rangle$.
Since $I\not=_GJ$,
we know that $G$ has at least two conjugacy classes of noncentral subgroups of order $2$. Further, since $N_G(I)=Z(G)I$, $N_G(J)=Z(G)J$,
and $|Z(G)|=2$, we have $|N_G(J)|=|N_G(I)|=4$. Thus $[G:N_G(J)]=[G:N_G(I)]=\frac{|N|}{2}$. Since every noncentral subgroup of order $2$ is of the form $\langle xn\rangle$
with $n\in N$, we see that $G$ has exactly two conjugacy classes of noncentral subgroups of order $2$. 
Again by 
Theorem~\ref{primidemQBG} and Equation~\ref{unitBreakdown}, we may write
\[u=[G/G]+\alpha[G/1]+\beta[G/I]+\gamma[G/J],\]
where $\alpha,\beta,\gamma\in \bZ$.
Since $|N_G(I)/I|=|N_G(J)/J|=2$,  taking fixed points of $I,J$ and the trivial subgroup, we have $2\beta+1=\pm1$, $2\gamma+1=\pm1$, 
and $\frac{|G|}{2}\beta+\frac{|G|}{2}\gamma+|G|\alpha+1=\pm1$. The only integer solutions are
$\alpha=\beta=\gamma=0$ and $\alpha=1, \beta=\gamma=-1$ and the result follows.
\end{proof}

\begin{notadef}\label{pseudodihedralDefinition}
We will refer to groups as described in Lemma~\ref{PhiLemma} as \emph{pseudodihedral}.
We will frequently abuse this definition and say a group is pseudodihedral if it is isomorphic to a pseudodihedral group.
If $G$ is trivial, isomorphic to a cyclic group of order $2$, or isomorphic to an pseudodihedral group, 
then $\partial B^\times(G)$ is generated by a unique nontrivial element. We will denote this element as $\Phi_G$.
\end{notadef}

\begin{rmk}\label{PhiCharacterization}
If $G$ is the trivial group, then $\Phi_G=-1$. If $G=C_2$, then $\Phi_G$ can be characterized by its image in the ghost ring, which is 
given by $|\Phi_G^{\{1\}}|=1$ and $|\Phi_G^{C_2}|=-1$.
Similarly, if $G$ is pseudodihedral, then Lemma~\ref{PhiLemma} tells us that $|\Phi_G^X|=-1$ if and only if $X$ is a noncentral subgroup of $G$, of order $2$.
\end{rmk}

It is straightforward to verify that any subgroup or any quotient group of a pseudodihedral group will be either pseudodihedral or abelian.
Equivalently, this means that any subquotient of a pseudodihedral group is either abelian or pseudodihedral. 
This is analogous to a property dihedral groups have. 

Suppose $G=C_2\ltimes N$ is pseudodihedral and $x$ is a generator for $C_2$. With the exception where $G\cong D_8$, the subgroup $N$
is the unique abelian subgroup with index $2$. However, for the case of $D_8$, the subgroup $N$ can be specified as the unique cyclic subgroup
of index $2$. 
Notice that every element of $G-N$ is of the form $xn$ for some $n\in N$. Thus, every element of $G-N$
conjugates elements of $N$ by inversion and every element of $G-N$ has order $2$.

\begin{lemma}\label{quotientSubConnection}
Suppose $G$ is abelian or pseudodihedral. If $S\unlhd G$ is a normal subgroup of $G$, then there is some $X\leqslant G$ such that
$G/S\cong X$. Moreover, for every pseudodihedral subgroup of $Y\leqslant G$, there is a quotient of $G$ isomorphic to $Y$.
\end{lemma}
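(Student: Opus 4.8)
The plan is to treat the abelian case and the pseudodihedral case separately, since in both the structure is explicit enough to exhibit the required subgroups and quotients directly.

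\textbf{The abelian case.} If $G$ is abelian, every subgroup is normal and $G$ is a direct product of cyclic groups. For any $S\unlhd G$, the quotient $G/S$ is again a finite abelian group, so by the structure theorem it decomposes into cyclic factors; one then observes that $G$ itself contains a subgroup isomorphic to $G/S$. The cleanest way is to work prime by prime: it suffices to show that for each prime $p$, the Sylow $p$-subgroup of $G/S$, which is $G_p/S_p$ with $G_p$ the Sylow $p$-subgroup of $G$ and $S_p = S\cap G_p$, embeds into $G_p$. This is the standard fact that a quotient of a finite abelian $p$-group embeds into it (compare partitions: the type of $G_p/S_p$ is dominated coordinatewise by the type of $G_p$). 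Assembling these embeddings over all primes gives $X\leqslant G$ with $X\cong G/S$. There is no pseudodihedral subgroup to worry about in this case, so the ``moreover'' clause is vacuous.

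\textbf{The pseudodihedral case.} Write $G = C_2\ltimes N$ with $x$ a generator of $C_2$ acting by inversion on $N$ and $N_2$ cyclic (using Notation and Definition~\ref{pseudodihedralDefinition}). Let $S\unlhd G$. The remark preceding this lemma tells us every subquotient of $G$ is abelian or pseudodihedral. If $S\leqslant N$, then $N/S$ is abelian with cyclic Sylow $2$-subgroup, $x$ still acts by inversion on it, and $G/S\cong C_2\ltimes(N/S)$ is pseudodihedral (or abelian, when $N/S$ has order $\le 2$); by the abelian-case embedding $N/S\cong N'\leqslant N$ for a suitable $N'$ with $x$-stable image (any subgroup of $N$ is $x$-stable since $x$ inverts), so $G/S\cong C_2\ltimes N'\leqslant G$. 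If $S\not\leqslant N$, then $S$ contains some element $xn$; since $S\cap N\unlhd G$ and $(xn)$ inverts $N$, conjugating shows $S = (S\cap N)\langle xn\rangle$ has index $2\,[N:S\cap N]$ divided appropriately, and $G/S\cong N/(S\cap N)$ is abelian — hence isomorphic to a subgroup of $N\leqslant G$ by the abelian case. Either way $G/S$ is isomorphic to a subgroup of $G$.

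\textbf{The ``moreover'' clause for pseudodihedral $G$.} Let $Y\leqslant G$ be a pseudodihedral subgroup. Then $Y\not\leqslant N$ (as $N$ is abelian), so $Y$ contains an element $xn$ inverting $Y\cap N$, and $Y = (Y\cap N)\langle xn\rangle$ with $Y\cap N$ having cyclic Sylow $2$-subgroup. Set $M = Y\cap N$; this is a subgroup of $N$, hence normal in $G$, and $G/M\cong C_2\ltimes(N/M)$. I claim $N/M$ contains a subgroup isomorphic to... rather, I want a quotient of $G$ isomorphic to $Y$, so I instead want to realize $M$ as a complemented-type piece. The right move: since $N$ is abelian with cyclic $N_2$, for the subgroup $M\leqslant N$ there is (by the abelian structure theory, reversing the embedding direction) a subgroup $M'\leqslant N$ with $N/M'\cong M$; as $M'$ is $x$-stable and normal in $G$, the quotient $G/M'\cong C_2\ltimes(N/M')\cong C_2\ltimes M\cong Y$, since $x$ acts by inversion on both sides and $M$ has cyclic Sylow $2$-subgroup. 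This completes the proof.

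\textbf{Main obstacle.} The one genuinely nontrivial ingredient is the abelian fact in both directions: that for a finite abelian group $A$ and subgroup $B$, the quotient $A/B$ embeds in $A$, \emph{and} that there exists $B'\leqslant A$ with $A/B'\cong B$. Both reduce to a combinatorial statement about partitions (Young-diagram containment) at each prime, together with the self-duality of finite abelian groups ($A\cong\Hom(A,\bQ/\bZ)$ interchanges subgroups and quotients). I expect this duality bookkeeping — making sure the chosen subgroups are $x$-stable and that the cyclicity of the $2$-part is preserved under passing between $M$ and $M'$ — to be the part requiring the most care, though none of it is deep.
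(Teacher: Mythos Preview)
Your proposal is correct and follows essentially the same route as the paper: reduce to the self-duality of finite abelian groups (every quotient is isomorphic to a subgroup and conversely), and then use that two inversion semidirect products $C_2\ltimes A$ and $C_2\ltimes B$ are isomorphic once $A\cong B$. The one cosmetic difference is that the paper proves any normal $S$ with $|G/S|>2$ must already lie inside $N$, whereas you handle the case $S\not\leqslant N$ directly via the second isomorphism theorem; both arguments are fine, and your worry about $x$-stability and cyclicity of the $2$-part is unfounded since every subgroup of $N$ is automatically inverted by $x$ and inherits a cyclic Sylow $2$-subgroup.
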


\begin{proof}
This is trivial if $G$ is abelian, so we consider the case where $G$ is pseudodihedral.
If $S$ is equal to $G$, or trivial, or if $G/S$ has order $2$, then the result is clear. Assume that $S$ is nontrivial and that $|G/S|>2$. 
Since $G$ is pseudodihedral, we let $N$ denote the unique abelian subgroup (in the case $G\cong D_8$, we let $N$ denote the unique cyclic subgroup) of index $2$ in $G$.
We first prove that $S<N$. For the sake of contradiction, suppose that $x\in S$, such that $x\not\in N$.  
Notice that $x$ will have order $2$, conjugation by $x$ inverts elements of $N$, and $S=\langle x,N\cap S\rangle$. 
Because of our assumption that $[G:S]>2$, we can deduce
that $[N:S\cap N]>2$. Thus, there is some $n\in N$ such that $n^2\not\in S\cap N$. The subgroup $S$ is normal in $G$, so we have that $n^{-1}xn=xn^2\in S$ 
and so $x(xn^2)=n^2\in S$, which is a contradiction. So $S$ is contained in $N$.

Since $N$ is abelian, there exists 
$X'\leqslant N$ such that $N/S=\overline{N}\cong X'$, where we use the notation $\overline{g}$ to denote the image of $g\in G$ in the canonical projection of 
$G\to G/S$, and set $X=\langle x, X'\rangle$, where $x$ is any element not in $N$. Write $G/S=\langle\overline{x},\overline{N}\rangle$. We see that $G/S$ is pseudodihedral or isomorphic to
$C_2\times C_2$, since conjugation by 
$\overline{x}$ inverts elements of $\overline{N}$, the subgroup $\overline{N}$ is abelian, the two part $\overline{N}_2$ of $\overline{N}$ is cyclic, and $\langle \overline{x}\rangle\cap\overline{N}$ is trivial. 
Similarly one verifies $X$ is pseudodihedral or 
isomorphic to $C_2\times C_2$. Since $\overline{N}\cong X'$, then $G/S\cong X$.

To prove the last statement, consider that if $Y$ is a pseudodihedral subgroup of $G$, then $Y=\langle xn, Y\cap N\rangle$, for some $n\in N$. 
Since $N$ is abelian, there is some subgroup $M\leqslant N$, such that $Y\cap N\cong N/M$. Moreover, $x$ acts by inversion on $N$, hence
$M\unlhd G$. Thus $G/M\cong Y$. 
\end{proof}

\begin{prop}
\label{abelianSubgroup}
Suppose $G$ is a finite group. If $N<G$ is an abelian subgroup with $[G:N]=2$, then exactly one of the following hold:
$\partial B^\times (G)$ is trivial,
$G$ is isomorphic to $C_2$, or
$G$ is isomorphic to a pseudodihedral group.

\end{prop}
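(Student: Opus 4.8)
The plan is to dispatch the ``exactly one'' clause and the abelian case directly, and then to study a putative faithful element through the $\langle x\rangle$-orbit structure supplied by Lemma~\ref{C2action}. For mutual exclusivity: a pseudodihedral group is non-abelian, so not isomorphic to $C_2$, and by Notation and Definition~\ref{pseudodihedralDefinition} (cf.\ Lemma~\ref{PhiLemma}) both $C_2$ and every pseudodihedral group have $\partial B^\times\neq 0$; hence the three alternatives are pairwise incompatible, and it suffices to show that if $N<G$ is abelian of index $2$ and $\partial B^\times(G)\neq 0$, then $G\cong C_2$ or $G$ is pseudodihedral. If $G$ is abelian, Corollary~\ref{trivialFaithAb} together with $N<G$ forces $|G|=2$. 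So assume $G$ is non-abelian and fix $x\in G\setminus N$. Since $N$ is abelian and $G$ is non-abelian, $C_G(N)=N$, conjugation induces a faithful action of $G/N\cong C_2$ on $N$, and $Z(G)=C_N(x)$. If $|Z(G)|>2$ then $\partial B^\times(G)=0$ by Lemma~\ref{faithCentBound}, contrary to hypothesis; so $|Z(G)|\le 2$, and Lemma~\ref{C2action} applies to this action, placing us in one of its three cases, in all of which $x$ inverts the odd part $N_{2'}$.

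In case (i), $x$ inverts all of $N$ and $N_2$ is cyclic. Then every subgroup of $N$ is $x$-invariant, hence normal in $G$, and $(xn)^2=x^2\in Z(G)$ for all $n\in N$. If $x^2=1$ the sequence $1\to N\to G\to C_2\to 1$ splits, so $G=C_2\ltimes N$ with $C_2$ acting by inversion and $N_2$ cyclic; since $G$ is non-abelian, $|N|>2$ and $G$ is pseudodihedral, as desired. If $x^2\neq 1$, then $G$ has no involution outside $N$; for $0\neq u\in\partial B^\times(G)$ and $K\le G$ with $|u^K|=-1$, the characterization of faithful elements recalled at the end of Remark~\ref{tensorInductionComp}, applied to the normal subgroup $K\cap N\unlhd G$, forces $K\cap N=1$, so $|K|\le2$, so $K=1$; then $u=[G/G]+\alpha[G/1]$ by~(\ref{unitBreakdown}), and evaluating at the trivial subgroup gives $1+\alpha|G|=\pm1$ with $|G|\ge 8$, whence $u=[G/G]$ --- a contradiction.

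Cases (ii) and (iii) I would treat by the same mechanism: reduce the support of $u$ to the trivial subgroup together with finitely many noncentral involutions, compute their conjugacy classes and the indices $[N_G(K):K]$, expand $u$ accordingly via~(\ref{unitBreakdown}), and show the resulting system of equations $|u^K|=\pm1$ over $\bZ$ has only the trivial solution --- unless $G\cong D_8$, which is pseudodihedral via its cyclic index-$2$ subgroup. In case (ii), $N_2$ is cyclic, so again all subgroups of $N$ are normal in $G$, forcing $|K|\le2$; the noncentral involutions lie outside $N$ and form a single conjugacy class with $[N_G(K):K]=2$, and the equations at $K$ and at the trivial subgroup, with $|G|\ge 16$, leave only $u=[G/G]$. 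In case (iii), $N_2\cong C_2\times C_2$ and the $C_2$ acts on it as a transposition fixing the central involution $z$; here one shows instead that a subgroup of trivial $G$-core meets $N$ inside $N_2$, has trivial odd part, and omits $z$, and that its order is not $4$ (an order-$4$ subgroup meeting $N$ in an order-$2$ subgroup must contain $z$), leaving $|K|\le 2$; the noncentral involutions now fall into the class of $\langle s\rangle$ (inside $N$, normalizer index $2|N_{2'}|$) and the class of an involution outside $N$ (normalizer index $2$), and the resulting $\bZ$-system is inconsistent unless $|N_{2'}|=1$, i.e.\ $G\cong D_8$.

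The step I expect to be the main obstacle is case (iii): the clean reduction of cases (i) and (ii) --- ``$K\cap N$ is normal in $G$, so $|K|\le2$'' --- fails because $C_2\times C_2$ has non-normal subgroups of order $2$, so the enumeration of subgroups of trivial $G$-core, the identification of the two conjugacy classes of noncentral involutions with their normalizers, and the check that the associated integer system forces $G\cong D_8$, all have to be done by hand. The remaining points are routine bookkeeping: the small degenerate cases ($G=1$ is excluded since $N<G$ has index $2$; $|N|\le2$ makes $G$ abelian) and the ``exactly one'' verification as above.
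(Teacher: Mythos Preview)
Your strategy---a direct case split along Lemma~\ref{C2action}, enumerating all subgroups of trivial $G$-core and solving the resulting integer system from~(\ref{unitBreakdown})---is different from the paper's, which instead picks a \emph{maximal} $S\in[F_u]$, uses Theorem~\ref{primidemQBG} to force $[N_G(S):S]\le2$, and argues uniformly from there. Your cases (i) and (ii) are correct.

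The genuine gap is in case (iii). Lemma~\ref{C2action}(iii) allows $N_2\cong C_{2^n}\times C_2$ for every $n\ge1$, but you have written ``$N_2\cong C_2\times C_2$ and the $C_2$ acts on it as a transposition,'' which is only the $n=1$ case. For $n\ge2$ your bookkeeping no longer matches: the normalizer index of the noncentral class inside $N$ is $|N|/2=2^n|N_{2'}|$, not $2|N_{2'}|$; there are four subcases $(e,\varepsilon)$ for the action on $\langle a\rangle$; and the argument that an order-$4$ subgroup with trivial core cannot exist needs the observation that any subgroup of $N_2$ meeting $\langle a\rangle$ nontrivially already contains the central involution $a^{2^{n-1}}$. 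Your method does extend (for $|N|>4$ the equation at $\langle b\rangle$ forces $\beta=0$, collapsing the system as in your case (ii)), but you have not carried it out, and nothing in your sketch signals awareness that $n\ge2$ must be handled. The paper's maximal-$S$ trick sidesteps all of this: once $[N_G(S):S]\le2$, the case $S\le N$ forces $S$ to have index at most $2$ in $N$, hence (for $n\ge2$) to contain $a^{2^{n-1}}$, a contradiction; so $S$ is a complement of $N$, and a single conjugacy-class count finishes cases (ii) and (iii) together.
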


\begin{proof}
If $N$ is trivial or of order $2$, then $G$ is abelian and we are done by Corollary~\ref{trivialFaithAb}. We will now suppose that $|N|>2$.
Let $x\in G$ be an element whose image 
generates $G/N$, then $\langle x, N\rangle=G$. Denote $x^2=z$ and note that $z\in N$, in fact $z\in Z(G)$. We may assume that $z^2=1$, otherwise 
the result holds by Lemma~\ref{faithCentBound}. We may also assume $x\not \in Z(G)$, otherwise the result holds by Corollary~\ref{trivialFaithAb}. 
Since $N$ is abelian, conjugation by $x$ induces an action of $G/N$ on $N$. Any element of $N$ fixed by this action will be in the center of $G$. Therefore, by Lemma~\ref{faithCentBound}, we may assume that we are in one of the three cases described in Lemma~\ref{C2action}. Note that Lemma~\ref{C2action} also allows us to assume
$N_2$ is nontrivial, otherwise $z=1$ and $G=\langle x\rangle \ltimes N$ is pseudodihedral.

Our first goal is to show that if $\partial B^\times(G)$ is nontrivial, then $G$ is isomorphic to $D_8$ or $N$ has a complement in $G$.
Suppose $u\in \partial B^{\times}(G)$ is any nontrivial element. By Equation~\ref{unitBreakdown} we can write

\[u=[G/G]-\sum_{K\in [F_u]}2e^G_K,\]
where $[F_u]=\{S\in \sS_G||u^S|=-1\}$. If $u$ is nontrivial, then $[F_u]$ is nonempty. Recall that for any $S\in [F_u]$, the subgroup $S$ does not contain a nontrivial normal subgroup of $G$. Choose $S$ to be a maximal element of $[F_u]$, then, by 
Theorem~\ref{primidemQBG}, the coefficient in front of $[G/S]$ in $\beta=\sum_{K\in [F_u]}2e^G_K$ is equal to $\frac{2}{[N_G(S):S]}$. Since 
$\beta\in B(G)$, $\frac{2}{[N_G(S):S]}$ is an integer. Thus,
$[N_G(S):S]\leq 2$. If $S$ is trivial, then $N_G(S)=G$ and $[N_G(S):S]>4$, since $N$ is nontrivial. Thus $S$ is not trivial.

If $S\leqslant N$, then $[N:S]\leq[N_G(S):S]\leq2$. The subgroup $S$ cannot be a normal subgroup, since $|u^S|=-1$. Thus $[N:S]=2$.
Since $S$ does not contain a nontrivial normal subgroup of $G$, we may assume that $N_{2'}$ is trivial, so $N$ must be a 2-group.
Consider each of the three cases in Lemma~\ref{C2action} applied here.
For the first two cases, when $N$ is cyclic, the unique subgroup of order two in $N$ is fixed by the action of $G/N$. Any maximal subgroup
of $N$ will contain this subgroup, which contradicts our assumption that $S$ contains no nontrivial normal subgroup of $G$. Now, consider
the third case of Lemma~\ref{C2action}, where $N=\langle a,b| a^{2^n}=b^2=[a,b]=1\rangle$, with $^xb=a^{2^{n-1}}b$ and $^xa=a^eb^\varepsilon$, with 
$e\in\{2^{n}-1,2^{n-1}-1\}$ and $\varepsilon\in\{0,1\}$. Assume for now that $n>1$. Thus $a^{2^{n-1}}\neq 1$ is fixed by the action of $G/N$. If 
$S\neq\langle a\rangle$, then $S\langle a\rangle=N$ implies that $S\cap \langle a\rangle$ is nontrivial since $|N|\geq8$. In either case, 
$\langle a^{2^{n-1}}\rangle< S$ and $S$ contains a nontrivial normal subgroup of $G$.

If $N\cong C_2\times C_2$, then $|G|=8$. Thus $G$ is either abelian or isomorphic to $D_8$, since the quaternion group does not contain a subgroup isomorphic to $C_2\times C_2$. Hence $G\cong D_8$ or $S\not\leqslant N$.

If $G\not\cong D_8$, then we can assume that $S\not\leqslant N$. Thus, there is some element in $S$ of the form $xa$ where $a\in N$. 
Recall that any element of $G-N$ is of the form $xb$ or $x^{-1}b$ for some $b\in N$. Note that since $xN=x^{-1}N$, so conjugation by $x$ and $x^{-1}$ induce the same
action.
Since $N$ is abelian, it follows that, for any $h\in N$, $^{xa}h=\,^{xb}h=\,^{x^{-1}b}h$.
This implies $S$ contains all $G$-conjugates 
of $N\cap S$. In particular, $^x(N\cap S)(N\cap S)\leqslant S$ is a normal subgroup of $G$.
Since $S$ contains no nontrivial normal subgroup of $G$, we may assume that $N\cap S=\{1\}$.
This implies that $|S|=2$ and thus, is a complement to $N$ and $G=S\ltimes N$.

We now prove that if $\partial B^\times (G)$ is nontrivial, then $G=S\ltimes N$ is pseudodihedral. Notice that in the previous paragraph, we showed that any maximal element of 
$[F_u]$ must be a noncentral subgroup of order $2$. 
Let $y$ be a generator of $S$. We only need to consider cases $(ii)$ and $(iii)$ from Lemma~\ref{C2action}, otherwise $G$ is pseudodihedral.
Note that $N_G(S)=C_G(y)S=Z(G)S$. 
This establishes that $S$ is contained in a conjugacy class of size $\frac{|G|}{4}$. Each noncentral subgroup of order $2$ is generated by an element of the form $ym\in G$ for some unique $m\in N$. 
Since each conjugacy class of noncentral subgroups of order $2$ must also have size $\frac{|G|}{4}=\frac{|N|}{2}$, there are at most two such classes. 
There will only be
two conjugacy classes of noncentral subgroups of order $2$
if every element of the form $ym\in G$, where $m\in N$ has order $2$.  It is easy to verify that this is not so for
each of the cases $(ii)$ and $(iii)$ from Lemma~\ref{C2action}.
Thus by Theorem~\ref{primidemQBG} we can write
\[u=[G/G]+\alpha[G/1]+\beta[G/S],\]
for some integers $\alpha$ and $\beta$.
Considering fixed points of $S$ and and the trivial group produces equations of the form $2\beta+1=\pm1$ and $\frac{|G|}{2}\beta+|G|\alpha+1=\pm1$. Because
$G$ is not abelian, $|G|>4$ and the only integer solutions are $\alpha=\beta=0$. The result follows.
\end{proof}

\begin{theorem}
\label{basisofab2}
Let $G$ be a finite group with an abelian subgroup of index $1$ or $2$. Let $\mathcal{N}$ denote the set of normal subgroups
of $G$ such that $G/N$ is trivial, isomorphic to $C_2$, or isomorphic to a pseudodihedral group. Then $\{\Inf_{G/N}^G(\Phi_{G/N})\}_{N\in \mathcal{N}}$ is a basis for 
$B^\times(G)$.
\end{theorem}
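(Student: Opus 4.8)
The plan is to obtain the theorem from the canonical decomposition of $B^\times(G)$ recorded in Proposition~\ref{normChar}. Applied to the biset functor $F=B^\times$, that proposition provides an isomorphism of abelian groups
\[\iota\colon\bigoplus_{N\unlhd G}\partial B^\times(G/N)\;\xrightarrow{\ \sim\ }\;B^\times(G),\qquad \iota\Bigl(\bigoplus_{N}v_N\Bigr)=\sum_{N\unlhd G}\Inf_{G/N}^G v_N .\]
Thus it is enough to identify every summand $\partial B^\times(G/N)$ and then carry a basis of the left-hand side across $\iota$.

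First I would note that the hypothesis passes to quotients: if $A\leqslant G$ is abelian with $[G:A]\in\{1,2\}$ and $N\unlhd G$, then $AN/N\cong A/(A\cap N)$ is an abelian subgroup of $G/N$ whose index $[G:AN]$ divides $[G:A]$ and so again lies in $\{1,2\}$. Hence, for every $N\unlhd G$, the group $G/N$ falls under the scope of the earlier results: if $G/N$ is abelian of order $>2$ then $\partial B^\times(G/N)=0$ by Corollary~\ref{trivialFaithAb}; if $G/N$ is non-abelian then it has an abelian subgroup of index exactly $2$, so Proposition~\ref{abelianSubgroup} forces either $\partial B^\times(G/N)=0$ or $G/N$ pseudodihedral; while if $G/N$ is trivial, $C_2$, or pseudodihedral then $\partial B^\times(G/N)$ is nonzero (for the trivial group $\partial B^\times(1)=B^\times(1)=\{\pm1\}$, and the other two cases are covered by Lemma~\ref{PhiLemma} and Notation and Definition~\ref{pseudodihedralDefinition}). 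Assembling these, for $N\unlhd G$ one has $\partial B^\times(G/N)\neq 0$ precisely when $N\in\mathcal{N}$, and in that case $\partial B^\times(G/N)$ is the one-dimensional $\bF_2$-space spanned by its unique nontrivial element $\Phi_{G/N}$.

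It then remains to transport the obvious basis. By the previous step,
\[\bigoplus_{N\unlhd G}\partial B^\times(G/N)=\bigoplus_{N\in\mathcal{N}}\partial B^\times(G/N)=\bigoplus_{N\in\mathcal{N}}\bF_2\,\Phi_{G/N},\]
a space whose standard $\bF_2$-basis consists of the tuples having $\Phi_{G/N}$ in the $N$th slot and $0$ elsewhere. Applying $\iota$ to such a tuple yields exactly $\Inf_{G/N}^G(\Phi_{G/N})$. Since $\iota$ is an isomorphism of abelian groups — equivalently of $\bF_2$-vector spaces, as $B^\times$ takes values in elementary abelian $2$-groups — the images $\{\Inf_{G/N}^G(\Phi_{G/N})\}_{N\in\mathcal{N}}$ form a basis of $B^\times(G)$, which is the assertion.

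The genuinely hard input has already been supplied, namely Proposition~\ref{abelianSubgroup} (and, underlying it, Lemmas~\ref{C2action} and~\ref{PhiLemma}); within the present argument the only point needing care is to fuse Corollary~\ref{trivialFaithAb}, Proposition~\ref{abelianSubgroup}, Lemma~\ref{PhiLemma}, and Notation and Definition~\ref{pseudodihedralDefinition} into the single clean equivalence ``$\partial B^\times(G/N)\neq 0\iff N\in\mathcal{N}$, with the nonzero summands one-dimensional over $\bF_2$ and generated by $\Phi_{G/N}$''. Everything else is a direct unwinding of the isomorphism $\iota$ of Proposition~\ref{normChar}.
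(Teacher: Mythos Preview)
Your proposal is correct and follows essentially the same approach as the paper's proof: observe that the hypothesis is inherited by all quotients $G/N$, combine Corollary~\ref{trivialFaithAb}, Lemma~\ref{PhiLemma}, and Proposition~\ref{abelianSubgroup} to pin down each summand $\partial B^\times(G/N)$, and then invoke the decomposition of Proposition~\ref{normChar}. Your write-up is simply a more explicit unpacking of the paper's three-sentence argument.
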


\begin{proof}
Note that factor groups of $G$ will also have abelian subgroups of index $1$ or $2$. If $N$ is normal in $G$, then Corollary~\ref{trivialFaithAb}, Proposition~\ref{PhiLemma}, and Proposition~\ref{abelianSubgroup} altogether imply that $\partial B^\times(G/N)$ is either trivial or generated by $\Phi_{G/N}$. We can apply Proposition~\ref{normChar} to $G$ and the result follows.
\end{proof}

We remark that this generalizes and unifies some of Matsuda's results on the order of the unit group of the Burnside ring for abelian groups and dihedral groups (\cite{Ma}, Examples $4.5$ and $4.8$). We can extend this result to a larger class of groups by the following proposition due to Bouc. We remark that the full proof of this result requires the Feit-Thompson Odd Order Theorem.

\begin{prop}[\cite{B2}, Proposition 6.5]\label{oddIndexIso}
Let $G$ be a finite group, and $N$ be a normal subgroup of odd index in $G$. Then the group $G/N$ acts on $B^\times(N)$ and the maps 
$\Res^G_{N}$ and $\Ten_{N}^G$ induce mutual inverse isomorphisms between $B^\times(G)$ and $B^\times(N)^{G/N}$.
\end{prop}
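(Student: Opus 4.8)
The plan is to argue by induction on the odd index $[G:N]$, the essential content being the case where this index is an odd prime. First, two facts that hold for any normal subgroup and use only that $B^\times$ is a biset functor. For $g\in G$ the automorphism $c_g$ of $N$ induces an operator $B^\times(\Iso(c_g))$ on $B^\times(N)$; these compose correctly and give a left action of $G$ on $B^\times(N)$, and since $\Iso(c_g)$ is isomorphic to the identity biset of $N$ when $g\in N$, the action factors through $G/N$. Next, $\Res^G_N$ sends $B^\times(G)$ into $B^\times(N)^{G/N}$: for $b\in B^\times(G)$, $g\in G$, and $H\leqslant N$ one has $g^{-1}Hg\leqslant N$ and $|b^{g^{-1}Hg}|=|b^H|$ (marks of an element of $B(G)$ are constant on $G$-conjugacy classes), so the conjugate ${}^g(\Res^G_N(b))$ has the same marks as $\Res^G_N(b)$ and hence equals it. Thus both maps in the statement are defined, and $[G:N]=1$ is trivial.

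For the inductive step, $G/N$ has odd order $>1$, so by the Feit--Thompson theorem it is solvable, hence not perfect, hence has a normal subgroup of prime index; its preimage is a subgroup $M$ with $N\leqslant M\unlhd G$, $[G:M]=p$ an odd prime, and $[M:N]<[G:N]$. Transitivity of restriction and of induction bisets (Remark~\ref{ebisetTranCom}) together with functoriality of $B^\times$ give $\Res^G_N=\Res^M_N\circ\Res^G_M$ and $\Ten^G_N=\Ten^G_M\circ\Ten^M_N$, and the same marks argument as above shows $\Res^M_N$ commutes with $G$-conjugation, hence restricts to an isomorphism from $B^\times(M)^{G/M}$ onto $(B^\times(N)^{M/N})^{G/M}=B^\times(N)^{G/N}$. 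Granting the prime-index case for $(G,M)$ and the inductive hypothesis for $(M,N)$, these composites are mutually inverse isomorphisms $B^\times(G)\cong B^\times(N)^{G/N}$; so we may assume $[G:N]=p$ is an odd prime.

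Then, because $N\unlhd G$, the Mackey formula of Remark~\ref{ebisetTranCom} collapses to $\Res^G_N\circ\Ind^G_N=\sum_{x\in[G/N]}\Iso(\gamma_x)$, a disjoint union of conjugation bisets; since the defining formula for $T_U$ carries disjoint unions of bisets to products, $\Res^G_N\,\Ten^G_N(a)=\prod_{x\in[G/N]}{}^x a=a^{p}=a$ for $a\in B^\times(N)^{G/N}$, because $B^\times(N)$ is an elementary abelian $2$-group and $p$ is odd. For the other composite, $\Ten^G_N\circ\Res^G_N=B^\times(\Ind^G_N\times_N\Res^G_N)=T_U$ with $U=G\times_N G$; unwinding the definition of $T_U$ (see Remark~\ref{tensorInductionComp}) gives that for a unit $b$ the $L$-mark of $T_U(b)$ is $\prod_{x\in[L\backslash G/N]}|b^{\,x^{-1}Lx\cap N}|$. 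Here $x^{-1}Lx\cap N=x^{-1}(L\cap N)x$ is $G$-conjugate to $L\cap N$, so each factor equals $|b^{L\cap N}|$; and each $L$-orbit on $G/N$ has length $[L:L\cap N]$, so the number of factors $[G:N]/[L:L\cap N]$ is odd. Hence the $L$-mark of $\Ten^G_N\,\Res^G_N(b)$ is $|b^{L\cap N}|$, and $\Ten^G_N\circ\Res^G_N=\mathrm{id}$ is equivalent to the identity $|b^L|=|b^{L\cap N}|$ for all $b\in B^\times(G)$ and all $L\leqslant G$.

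Proving this identity is the crux, and it is precisely where passing to prime index helps. If $L\leqslant N$ it is trivial; otherwise $[L:L\cap N]=p$, so $L/(L\cap N)\cong C_p$, and by the description of deflation on marks in Remark~\ref{tensorInductionComp} the element $d:=\Def^L_{L/(L\cap N)}(\Res^G_L(b))\in B^\times(C_p)$ has mark $|b^L|$ at $L/(L\cap N)$ and mark $|b^{L\cap N}|$ at the trivial subgroup. But $C_p$ has only the subgroups $1$ and $C_p$, and $B^\times(C_p)$ is trivial: since $|Z(C_p)|=p>2$, Lemma~\ref{faithCentBound} gives $\partial B^\times(C_p)=0$, so $B^\times(C_p)\cong\partial B^\times(C_p)\oplus\partial B^\times(1)=\partial B^\times(1)$ by Proposition~\ref{normChar}. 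Hence $d=\pm[C_p/C_p]$, its two marks coincide, and $|b^L|=|b^{L\cap N}|$, closing the prime-index case and the induction. The one genuine obstacle is this last identity; the Mackey collapse, the evaluation of $T_U$, and the orbit count are routine, but the identity only becomes accessible once the index is a prime, and it is exactly in order to reduce the odd index to primes --- i.e.\ to know the relevant quotients are solvable --- that the Feit--Thompson theorem is needed.
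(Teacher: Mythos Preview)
The paper does not give its own proof of this proposition; it is quoted from \cite{B2} (Proposition 6.5) and used as a black box, with only the remark that the full proof requires the Feit--Thompson theorem. So there is nothing in this paper to compare your argument against.

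Your proof is correct and is essentially the argument Bouc gives in \cite{B2}. The induction on $[G:N]$ via Feit--Thompson to produce an intermediate normal subgroup $M$ of prime index, the Mackey-formula collapse yielding $\Res^G_N\Ten^G_N(a)=\prod_{x\in[G/N]}{}^xa=a^{p}=a$ on invariants, the mark computation showing the $L$-mark of $\Ten^G_N\Res^G_N(b)$ equals $|b^{L\cap N}|^{[G:N]/[L:L\cap N]}=|b^{L\cap N}|$, and the deflation to $B^\times(C_p)=\{\pm[C_p/C_p]\}$ to force $|b^L|=|b^{L\cap N}|$ are all sound. The inductive glue also checks out: since $\Res^M_N$ is $G$-equivariant (as you verify on marks), its inverse $\Ten^M_N$ carries $B^\times(N)^{G/N}$ into $B^\times(M)^{G/M}$, which is what is needed to compose with the prime-index isomorphism for $(G,M)$.
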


\begin{cor}\label{oddIndexCor}
Let $G$ be a finite group and $N$ a normal subgroup of odd index in $G$. 
Suppose $N$ has an abelian subgroup of index at most $2$. If $\mathcal{N}$ and
$\mathcal{L}=\{\Inf_{N/K}^N(\Phi_{N/K})\}_{K\in \mathcal{N}}$ are respectively the indexing set and basis of $B^\times(N)$ described in Theorem~\ref{basisofab2} 
applied to $N$, then $G/N$ acts on $\mathcal{L}$ and, denoting $L_1,\dots, L_k$ to be the orbit sums of this action, $\{\Ten^G_{N}L_i\}_{i=1}^k$
is a basis of $B^\times(G)$.
\end{cor}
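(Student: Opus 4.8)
The plan is to apply Theorem~\ref{basisofab2} to $N$ — which is legitimate since $N$ has an abelian subgroup of index at most $2$ — and then transport the resulting basis $\mathcal{L}$ of $B^\times(N)$ along Bouc's isomorphism from Proposition~\ref{oddIndexIso}. The only substantive point is to check that the $G/N$-action appearing in Proposition~\ref{oddIndexIso} permutes the set $\mathcal{L}$; everything after that is linear algebra over $\bF_2$.

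First I would recall that the action of $g\in G$ on $B^\times(N)$ in Proposition~\ref{oddIndexIso} is the one induced by the automorphism $c_g\colon N\to N$, $n\mapsto {}^gn$, and that it factors through $G/N$ since inner automorphisms of $N$ act trivially on biset functors. Fix $K\in\mathcal{N}$, so $K\unlhd N$ and $N/K$ is trivial, isomorphic to $C_2$, or pseudodihedral. Because $N\unlhd G$, we have ${}^gK\unlhd N$, and $c_g$ induces an isomorphism $N/K\cong N/{}^gK$; hence $N/{}^gK$ is again trivial, $C_2$, or pseudodihedral, i.e.\ ${}^gK\in\mathcal{N}$, so $G/N$ acts on $\mathcal{N}$ by conjugation. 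A straightforward computation with elementary bisets (or the uniqueness in Remark~\ref{elementaryBisetPresentation}) gives $\Iso(c_g)\circ\Inf_{N/K}^N=\Inf_{N/{}^gK}^N\circ\Iso(\bar c_g)$, where $\bar c_g\colon N/K\to N/{}^gK$ is the induced isomorphism. Since isomorphisms of groups induce isomorphisms between the faithful parts $\partial B^\times$, and $\Phi_{N/K}$, $\Phi_{N/{}^gK}$ are by definition the unique nontrivial elements of $\partial B^\times(N/K)$, $\partial B^\times(N/{}^gK)$ (Notation and Definition~\ref{pseudodihedralDefinition}), the map $\Iso(\bar c_g)$ sends $\Phi_{N/K}$ to $\Phi_{N/{}^gK}$. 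Therefore
\[
g\cdot\Inf_{N/K}^N(\Phi_{N/K})=\Inf_{N/{}^gK}^N(\Phi_{N/{}^gK}),
\]
so $G/N$ permutes the basis $\mathcal{L}$ via its conjugation action on $\mathcal{N}$.

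Next, since $B^\times(N)$ is an $\bF_2$-vector space on which $G/N$ acts by permuting the basis $\mathcal{L}$, the orbit sums $L_1,\dots,L_k$ form a basis of the fixed subspace $B^\times(N)^{G/N}$: they are $G/N$-fixed, they are supported on pairwise disjoint subsets of $\mathcal{L}$ and hence linearly independent, and conversely any $G/N$-fixed vector is an $\bF_2$-combination of elements of $\mathcal{L}$ whose support must be $G/N$-stable, hence a union of orbits, hence a sum of some of the $L_i$. Finally, by Proposition~\ref{oddIndexIso}, $\Ten^G_N$ restricts to an isomorphism from $B^\times(N)^{G/N}$ onto $B^\times(G)$, and an isomorphism carries a basis to a basis; thus $\{\Ten^G_N L_i\}_{i=1}^k$ is a basis of $B^\times(G)$. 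The step I expect to require the most care is the compatibility in the second paragraph: one must be sure that the $G/N$-action supplied by Proposition~\ref{oddIndexIso} is genuinely the conjugation action and that it intertwines inflation with the canonically defined generators $\Phi_{N/K}$ — that is, that $\Phi$ is natural with respect to isomorphisms — which is precisely what makes the phrase "$G/N$ acts on $\mathcal{L}$" meaningful. Once this naturality is recorded, the corollary follows formally.
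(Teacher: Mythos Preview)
Your proposal is correct and follows essentially the same argument as the paper: identify the $G/N$-action as conjugation via $\Iso(c_g)$, use the commutation $\Iso(c_g)\circ\Inf_{N/K}^N=\Inf_{N/{}^gK}^N\circ\Iso(\bar c_g)$, and invoke uniqueness of the nontrivial faithful element to conclude $\Iso(\bar c_g)(\Phi_{N/K})=\Phi_{N/{}^gK}$, so $\mathcal{L}$ is permuted and Proposition~\ref{oddIndexIso} finishes the job. You are in fact more explicit than the paper about why orbit sums over $\bF_2$ form a basis of the fixed space, which the paper leaves implicit.
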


\begin{proof}
$G$ acts on $N$ by conjugation, hence on $B(N)$ by ring automorphism, which restricts to an action on $B^\times(N)$ by group automorphism. The action of
$N$ is trivial, so we obtain an action of $G/N$ on $B^\times(N)$. By Proposition~\ref{oddIndexIso} it suffices to show that $\mathcal{L}$ is 
invariant under this action.
Choose $g\in G$ and let $c_g$ denote the automorphism of $N$ given by conjugation by $g$. Let 
$u\in B^\times(N)$, then $gN\cdot u=\Iso(c_g)(u)$. Hence, if $u=\Inf_{N/K}^N(\Phi_{N/K})$ for some $K\in \mathcal{N}$, then the action of
$gN$ on $u$ is equal to
\[\Iso(c_g)\circ\Inf_{N/K}^N(\Phi_{N/K})=\Inf^N_{N/^gK}\circ\Iso(\overline{c}_g)(\Phi_{N/K}),\]
where $\overline{c}_g$ is the isomorphism $N/K\to N/^gK$, induced by $c_g$. Since $\overline{c}_g$ is an isomorphism, $^gK\in \mathcal{N}$. Furthermore,
$\Iso(\overline{c}_g)(\Phi_{N/K})$ will be in the kernels of all nontrivial deflation maps, so it must be a faithful element of $B^\times(N/^gK)$, hence
$\Iso(\overline{c}_g)(\Phi_{N/K})=\Phi_{N/^gK}$. Thus $\mathcal{L}$ is invariant under the action of $G/N$.
\end{proof}

\begin{rmk}
The previous corollary is a generalization of Theorem~\ref{basisofab2}, though in general its application will involve a choice of a normal subgroup $N$ of odd index. 
The statement of the corollary implies that any choice of a normal subgroup $N$ satisfying the hypothesis will produce a basis for $B^\times(G)$, 
and different choices might result in 
different bases. However, we note that the intersection of such normal subgroups also satisfies the hypothesis, thus
we are able to designate a smallest such choice as the standard if needed.
\end{rmk}

\begin{rmknota}\label{specialSubcat}
It can be easily verified that the full subcategory of $\C$ whose objects are groups with normal subgroups $N$ of odd index, 
where $N$ has an abelian subgroup of index at most $2$, is a replete subcategory of $\C$. We will denote this subcategory by $\C'$.
\end{rmknota}

\begin{example}\label{unitGroupComputations}
Suppose $G$ is a finite group and $N\cong D_{2n}$ is a normal subgroup of 
$G$ with odd index, where $D_{2n}$ denotes the dihedral group of order $2n$. Then $N$ has
a unique cyclic subgroup $Z$, of index $2$. Let $d(n)$ denote the number of positive divisors of $n$.
Notice that if $K\unlhd N$, then $N/K$ is dihedral, cyclic of order $2$, or trivial, if and only if $|N/K|\neq 4$. 
Further, if $n$ is odd, every proper normal subgroup of $G$ is a subgroup of $Z$, and this corresponds exactly to the divisors of $n$. In the case where $n=2k$ for some integer $k>1$, we have three normal subgroups of index $2$, namely $Z$ and two that are isomorphic to $D_{2k}$. Every other proper normal subgroup of $N$ is a proper subgroup of $Z$ and exactly $d(n)-2$ of these have index other than $4$. By Theorem~\ref{basisofab2}, $|B^\times(N)|$ is equal to $2^{d(n)+1}$ if $n$ is odd and $2^{d(n)+2}$ if $n$ is even.

If $n$ is odd, every normal subgroup of $N$ is characteristic. In the case where $n=2k$, every normal subgroup is characteristic
except for the two normal subgroups isomorphic to $D_{2k}$. However, since $G/N$ has odd order,
it stabilizes these two subgroups. Thus, by
Corollary~\ref{oddIndexCor} $\Ten_N^G:B^\times(N)\to B^\times(G)$ is an isomorphism. 
\end{example}

We end this section with a pair of lemmas which will be helpful in the later sections.

\begin{lemma}\label{abelianGeneratedBy-1}
Let $G$ be an abelian group. Then
\[B^\times(G)=B(G,\{1\})\Phi_{\{1\}}=\langle \Ten_H^G(-1)\rangle_{H\leqslant G}.\]
\end{lemma}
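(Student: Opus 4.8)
The plan is to prove the two equalities in turn, working inside the ghost ring throughout. Note first that $B(G,\{1\}) = B(G\times\{1\}) = B(G)$ acts on $B^\times(\{1\}) = \{\pm 1\}$ via the biset functor structure; since every element of $B(G)$ is a $\bZ$-linear combination of transitive sets $[G/H]$, $H\leqslant G$, and since each $[G/H]$, viewed as a $(G,\{1\})$-biset, is precisely $\Ind_H^G\circ \Res^H_{\{1\}}\circ\,(\text{the point})$ — more plainly, $[G/H]$ as a $(G,\{1\})$-biset factors as $\Ind_H^G$ applied to the trivial $H$-set — the image $B(G,\{1\})\Phi_{\{1\}}$ is generated as a group by the elements $\Ten_H^G(-1)$, $H\leqslant G$, where $\Ten_H^G$ is the tensor-induction map $T_{\Ind_H^G}$ from Remark~\ref{tensorInductionComp} and $\Phi_{\{1\}} = -1$ by Remark~\ref{PhiCharacterization}. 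This gives the second equality essentially by unwinding definitions (one must check that $T$ is multiplicative on disjoint unions so that arbitrary elements of $B(G,\{1\})$ act through products of the $\Ten_H^G(-1)$, which is exactly the content of the formula for $T_U$ on a biset $U$ that is a disjoint union of transitive pieces).

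For the first equality $B^\times(G) = B(G,\{1\})\Phi_{\{1\}}$, the inclusion $\supseteq$ is immediate since $B^\times$ is a biset functor and $\Phi_{\{1\}}\in B^\times(\{1\})$. For $\subseteq$, I would use Theorem~\ref{basisofab2}: an abelian group certainly has an abelian subgroup of index $1$, so $B^\times(G)$ has basis $\{\Inf_{G/N}^G(\Phi_{G/N})\}_{N\in\mathcal N}$, where $\mathcal N$ consists of the normal subgroups $N$ with $G/N$ trivial or $\cong C_2$ (a quotient of an abelian group is abelian, hence never pseudodihedral, and by Corollary~\ref{trivialFaithAb} only the quotients of order $\leqslant 2$ contribute). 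So it suffices to show each $\Inf_{G/N}^G(\Phi_{G/N})$ lies in $B(G,\{1\})\Phi_{\{1\}}$. When $G/N$ is trivial this element is $[G/G] = 1 = \Ten_G^G(-1)\cdot(\text{something})$ — more carefully, $1\in B^\times(G)$ is the image of $-1$ under $\Ten$ composed with itself an even number of times, or simply $1 = \Ten_G^G(1)$; in any case the identity lies in the subgroup generated by the $\Ten_H^G(-1)$ because that subgroup contains $(\Ten_G^G(-1))^2 = 1$. When $G/N\cong C_2$, pick $H\leqslant G$ with $HN/N = G/N$ and $H\cap N = 1$ if possible; more robustly, I would compute the ghost-ring image of $\Ten_H^G(-1)$ for a well-chosen $H$ using Equation~\eqref{tenInduct} and match it against the ghost-ring image of $\Inf_{G/N}^G(\Phi_{G/N})$, which by Remark~\ref{PhiCharacterization} (combined with the fact that inflation in the ghost ring reads off fixed points under the quotient map) is $-1$ exactly on those subgroups $K\leqslant G$ with $KN = G$.

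The cleanest route to $\subseteq$, and the one I would actually pursue, avoids casework: induct on $|G|$. For $G$ abelian with $|G|>2$ every element $u\in B^\times(G)$ decomposes via Proposition~\ref{normChar} as $u = \sum_{1<N\unlhd G}\Inf_{G/N}^G(v_N)$ with $v_N\in\partial B^\times(G/N)$, since $\partial B^\times(G)$ is trivial by Corollary~\ref{trivialFaithAb}. By induction each $v_N\in B^\times(G/N)$ lies in $B(G/N,\{1\})\Phi_{\{1\}}$, i.e. is a product of terms $\Ten_{\bar H}^{G/N}(-1)$; applying $\Inf_{G/N}^G$ and using the transitivity/commutativity relations for elementary bisets (Remark~\ref{ebisetTranCom}, specifically $\Ind\circ\Inf = \Inf\circ\Ind$, which passes to tensor induction by functoriality of $B^\times$) converts $\Inf_{G/N}^G\Ten_{\bar H}^{G/N}(-1)$ into $\Ten_{H}^{G}(-1)$ for the preimage $H\leqslant G$ of $\bar H$. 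Hence $u\in B(G,\{1\})\Phi_{\{1\}}$, and the base cases $G$ trivial and $G\cong C_2$ are immediate from Remark~\ref{PhiCharacterization}.

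\emph{Main obstacle.} The one genuinely delicate point is the compatibility $\Inf_{G/N}^G\circ\Ten_{\bar H}^{G/N} = \Ten_{H}^{G}$ on unit groups: this needs the commutativity relation $\Ind_H^G\circ\Inf_{H/N}^H = \Inf_{G/N}^G\circ\Ind_{H/N}^{G/N}$ from Remark~\ref{ebisetTranCom}(2)(b) together with the fact that $B^\times$ is a functor, so that $T$ respects composition of bisets; one must be slightly careful that $\bar H = HN/N$ has preimage $H$ containing $N$, which is automatic. Everything else is bookkeeping with the explicit formula \eqref{tenInduct} and the ghost-ring description of the $\Phi$'s.
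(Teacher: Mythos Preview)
Your proposal is correct and aligns with the paper's approach. The paper's proof is extremely terse---it says only that the base cases $G$ trivial and $G\cong C_2$ are a direct computation, and that the general case follows from Theorem~\ref{basisofab2} together with ``transitivity of induction''---and your write-up essentially fills in what the paper leaves implicit: for abelian $G$ the basis from Theorem~\ref{basisofab2} reduces to $\Inf_{G/N}^G(\Phi_{G/N})$ with $|G/N|\le 2$, and each such element equals $\Ten_N^G(-1)$ via the biset relation $\Ind_H^G\circ\Inf_{H/N}^H=\Inf_{G/N}^G\circ\Ind_{H/N}^{G/N}$ (your ``main obstacle,'' which is indeed the only nontrivial step). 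Your induction-on-$|G|$ variant using Proposition~\ref{normChar} and Corollary~\ref{trivialFaithAb} is logically equivalent, since Theorem~\ref{basisofab2} in the abelian case \emph{is} exactly that combination; so the two routes you sketch are really one route told twice, and it is the paper's route.

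One small cleanup: when $G/N$ is trivial you overcomplicate things---$\Inf_{G/G}^G(\Phi_{\{1\}})=-1=\Ten_G^G(-1)$ is already one of the generators, no squaring needed. Also, the paper's phrase ``transitivity of induction'' is a bit loose; what is actually used, as you correctly identify, is the commutation of induction with inflation from Remark~\ref{ebisetTranCom}(2)(b).
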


\begin{proof}
This can be seen by straightforward calculation when $G$ is trivial or $C_2$. The general case follows from Theorem~\ref{basisofab2} and the
transitivity of induction.
\end{proof}

\begin{lemma}
\label{PhiRestriction}
Let $G$ be a pseudodihedral group and $H\leqslant G$. If $H$ is trivial or a noncentral group of order $2$, then $\Res^G_H(\Phi_G)=-\Phi_H$. If
$H$ is isomorphic to a pseudodihedral group then $\Res^G_H(\Phi_G)=\Phi_H$. Moreover, for any subquotient, $S$, of $G$ we have
\[B(S,G)\Phi_G=B^\times(S).\]
\end{lemma}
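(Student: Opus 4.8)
The plan is to split the statement into two parts. First I would prove the restriction formulas for $\Phi_G$ by comparing ghost-ring coordinates, using the characterization of $\Phi_G$ recalled in Remark~\ref{PhiCharacterization} together with the explicit description of $T_U$ for a restriction biset (Remark~\ref{tensorInductionComp}): if $U=\Res^G_H$, then $T_U=B(U)$ restricted to units, so $|\Res^G_H(\Phi_G)^K|=|\Phi_G^K|$ for every $K\leqslant H$. So I just need to read off, for each relevant $H$, whether $|\Phi_G^K|=-1$ exactly on the noncentral order-$2$ subgroups $K$ of $H$. When $H$ is trivial this is immediate ($\Res$ sends $\Phi_G\mapsto -1=-\Phi_{\{1\}}$ by Remark~\ref{PhiCharacterization}). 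When $H$ is a noncentral subgroup of order $2$, the only subgroups of $H$ are $\{1\}$ and $H$ itself; $|\Phi_G^{\{1\}}|=1$ and $|\Phi_G^H|=-1$ since $H$ is noncentral of order $2$ in $G$, which matches $-\Phi_H$ (the $C_2$ case of Remark~\ref{PhiCharacterization} with the sign flipped). When $H$ is pseudodihedral, I must check that a subgroup $K\leqslant H$ is noncentral of order $2$ in $G$ iff it is noncentral of order $2$ in $H$; the forward direction is clear, and the reverse needs that an order-$2$ subgroup of $H$ which is central in $G$ would have to lie in $Z(G)\cap H$, and since $H$ is pseudodihedral its unique abelian index-$2$ subgroup contains $Z(G)\cap H$ in a way compatible with $Z(H)$ — more simply, any $K\le H$ central in $G$ is central in $H$, and a pseudodihedral group has at most one central subgroup of order $2$, so such $K$ does not lie in $[F_{\Phi_H}]$; conversely a noncentral-in-$H$ order-$2$ subgroup consists of elements of $H\setminus N$ (using the description of pseudodihedral groups, every element outside the index-$2$ abelian subgroup has order $2$ and is noncentral), hence is noncentral in $G$ as well. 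This gives $\Res^G_H(\Phi_G)=\Phi_H$.

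For the second part, $B(S,G)\Phi_G=B^\times(S)$ for every subquotient $S$ of $G$, I would argue as follows. The inclusion $B(S,G)\Phi_G\subseteq B^\times(S)$ is clear since $B^\times$ is a biset functor and $\Phi_G\in B^\times(G)$. For the reverse inclusion, I use the presentation of transitive bisets (Remark~\ref{elementaryBisetPresentation}): every morphism in $B(S,G)$ is a $\bZ$-combination of composites $\Ind^S_A\circ\Inf^A_{A/B}\circ\Iso(f)\circ\Def^D_{C/D}\circ\Res^G_C$ with $A/B\cong C/D$ a subquotient of $G$. Applying $\Res^G_C$ to $\Phi_G$ lands in $B^\times(C)$, and since $C$ is a subquotient of a pseudodihedral group it is itself abelian or pseudodihedral (the remark after Notation and Definition~\ref{pseudodihedralDefinition}). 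So it suffices to show: for every $C$ that is abelian or pseudodihedral, $\Res^G_C(\Phi_G)$ generates — after applying further inflation, deflation, iso, induction — all of $B^\times(S)$ for every subquotient $S$ of $C$, i.e. it is enough to show $B(S',C)\cdot\Res^G_C(\Phi_G)=B^\times(S')$ whenever $S'$ is a subquotient of $C$.

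This reduces everything to the case where $G$ itself is abelian or pseudodihedral and we must show $B(S,G)\Phi_G=B^\times(S)$. By Theorem~\ref{basisofab2}, $B^\times(S)$ has basis $\{\Inf^S_{S/N}(\Phi_{S/N})\}$ as $S/N$ ranges over quotients that are trivial, $C_2$, or pseudodihedral; so it is enough to produce each $\Phi_{S/N}$ (for such a quotient) inside $B(S,G)\Phi_G$, since then inflation brings it into $B(S,G)\Phi_G$ as well. Now $S$ is a subquotient of $G$, so $S/N$ is also a subquotient of $G$, hence (being abelian or pseudodihedral, indeed of one of the three listed types) by Lemma~\ref{quotientSubConnection} applied to $G$ we can realize $S/N$ as a \emph{subgroup} $X\leqslant G$ when $G$ is pseudodihedral — and directly when $G$ is abelian. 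For that subgroup $X$, the first part of this lemma (when $X$ is pseudodihedral) or Remark~\ref{PhiCharacterization} (when $X$ is trivial or $C_2$) gives $\Res^G_X(\Phi_G)=\pm\Phi_X$; since $B^\times$ is a $2$-group the sign is irrelevant, so $\Phi_X\in B(X,G)\Phi_G$. When $G$ is abelian every subquotient is abelian, and here one instead invokes Lemma~\ref{abelianGeneratedBy-1}: $B^\times(S)=\langle\Ten^S_H(-1)\rangle_H=B(S,\{1\})\Phi_{\{1\}}$, and $\Phi_{\{1\}}=-1=\Res^G_{\{1\}}(\Phi_G)$ lies in $B(\{1\},G)\Phi_G$, so $B^\times(S)=B(S,\{1\})B(\{1\},G)\Phi_G\subseteq B(S,G)\Phi_G$. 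Composing $\Res^G_X$ with the appropriate $\Iso$, $\Inf$, (and, to pass from $S/N$ back to $S$, a section of the projection is not available, but inflation $\Inf^S_{S/N}$ is) exhibits $\Inf^S_{S/N}(\Phi_{S/N})$ as an element of $B(S,G)\Phi_G$. Ranging over all $N$ gives the whole basis, hence $B^\times(S)\subseteq B(S,G)\Phi_G$.

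The main obstacle I anticipate is the bookkeeping in the last paragraph: making sure that when $S$ is a genuine subquotient (not just a subgroup) of $G$, the quotients $S/N$ appearing in the Theorem~\ref{basisofab2} basis really are subquotients of $G$ of one of the three admissible types, and that Lemma~\ref{quotientSubConnection} is being applied to the right group to turn each such quotient into an honest subgroup on which the restriction formula for $\Phi$ is available. The abelian case needs separate (easy) handling via Lemma~\ref{abelianGeneratedBy-1} since pseudodihedral arguments don't apply there. Everything else is a ghost-ring coordinate check or an application of the biset presentation, both routine.
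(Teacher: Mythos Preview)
Your approach to the restriction formulas via ghost-coordinate comparison is the same as the paper's, and the argument for pseudodihedral $H$ (checking that order-$2$ subgroups noncentral in $H$ are exactly those noncentral in $G$) is fine.

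The second part has a genuine gap. Your plan is to obtain each basis element $\Inf^S_{S/N}(\Phi_{S/N})$ of $B^\times(S)$ by realizing $S/N$ as a subgroup $X\leqslant G$ and applying $\Inf^S_{S/N}\circ\Iso\circ\Res^G_X$ to $\Phi_G$. This succeeds for $S/N$ pseudodihedral and for $S/N\cong C_2$ (choosing $X$ noncentral), but it fails for the basis element with $N=S$, namely $\Inf^S_{S/S}(\Phi_{\{1\}})=-1$: since $|\Phi_G^{\{1\}}|=1$, one has $\Res^G_{\{1\}}(\Phi_G)=1$, not $-1$, contrary to what you assert. Your remedy ``since $B^\times$ is a $2$-group the sign is irrelevant'' conflates the $\bF_2$-linear structure with multiplication by the ring element $-1$; in $B^\times(X)$ the elements $\Phi_X$ and $-\Phi_X=(-1)\cdot\Phi_X$ are distinct, and having one in a subgroup does not yield the other unless $-1$ is already known to lie there. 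More to the point, any composite built from restriction, isomorphism, inflation, and tensor induction applied to $\Phi_G$ keeps ghost coordinate $+1$ at the trivial subgroup, so no map of that shape can ever produce $-1$: a deflation is required. The paper supplies exactly this step: for a noncentral $\langle x\rangle\leqslant G$ of order $2$,
\[
\Def^{\langle x\rangle}_{\langle x\rangle/\langle x\rangle}\circ\Res^G_{\langle x\rangle}(\Phi_G)=\Phi_{\{1\}},
\]
and once $-1\in B(\{1\},G)\Phi_G$ the abelian case follows at once from Lemma~\ref{abelianGeneratedBy-1}, while in the pseudodihedral case the possible signs in $\Iso\circ\Res^G_X(\Phi_G)=\pm\Phi_X$ become harmless. (Your opening ``reduction'' via the biset presentation is also unnecessary, since $G$ is pseudodihedral by hypothesis; and note that Lemma~\ref{quotientSubConnection} as stated realizes \emph{quotients} of a pseudodihedral group as subgroups, so to realize a general subquotient $S/N$ inside $G$ you should apply it to a pseudodihedral subgroup containing $S$, not directly to $G$.)
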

\begin{proof}
We may write $G=\langle x, N\rangle$, where $N$ is abelian and $x$ acts by inversion on $N$.
If $H$ is trivial or a noncentral group of order $2$, it is straightforward to verify that $\Res^G_H(\Phi_G)=-\Phi_H$. 
If $H$ is pseudodihedral, then $H=\langle xy, N\cap H\rangle$
for some $y\in N$. 
Notice that for any $S\leqslant H$, we have $|\Res^G_H(\Phi_G)^S|=-1$ if and only if $S$ is a noncentral subgroup of $H$, of order $2$ for some. 
Thus $|\Res^G_H(\Phi_G)^S|=|\Phi_H^S|$ for any $S\leqslant H$, by Remark~\ref{PhiCharacterization}. It follows that $\Res^G_H(\Phi_G)=\Phi_H$.

To prove the last statement, notice that $S$ is either abelian or pseudodihedral.
If $S$ is abelian, this follows easily from Lemma~\ref{abelianGeneratedBy-1} as long as we can prove it for $S=\{1\}$. For this, it is sufficient
to show that $-1=\Phi_{\{1\}}\in B(\{1\},G)\Phi_G=B^\times(\{1\})$. For this, it is straightforward to verify that 
\[\Def^{\langle x\rangle}_{\langle x\rangle/\langle x\rangle}\circ\Res^G_{\langle x\rangle}(\Phi_G)=\Phi_{\langle x\rangle/\langle x\rangle}.\]

In the case when $S$ is pseudodihedral, we first show the lemma holds in the case where $S=G$.
Lemma~\ref{quotientSubConnection} tells us that each factor group of $G$ is isomorphic to a subgroup of 
$G$. If $N$ is a normal subgroup of $G$ and $\partial B^\times(G/N)$ is nontrivial, then let $T\leqslant G$ be isomorphic to $G/N$
and $f:T\to G/N$ is an isomorphism. If $G/N$ has order $2$, then choose $T$ to be noncentral. Then $\Iso(f)\circ\Res^G_T(\Phi_G)=\epsilon*\Phi_{G/N}\in B^\times(G/N)$ (here $``*"$ is used to denote the group law of $B^\times(G/N)$), 
where $\epsilon=-1\in B^\times(G/N)$ if $T$ is trivial or cyclic of order $2$ and $\epsilon=1\in B^\times(G/N)$ otherwise. Thus 
\[\Inf_{G/N}(\Phi_{G/N})\in B(G,G)\Phi_G\]
 and by Theorem~\ref{basisofab2}, $B(G,G)\Phi_G=B^\times(G)$.

If $S$ is any pseudodihedral subquotient of $G$, then $S=X/M$ for some $X\leqslant G$ and $M\unlhd X$. 
By the first part of the lemma and Lemma~\ref{quotientSubConnection}, we have 
\[B(X/M,X/M)\Phi_{X/M}\subseteq B(X/M,G)\Phi_G\] 
and the result follows by replacing $G$ in the second paragraph with $X/M$.
\end{proof}

\section{Residual groups for $B^\times$}

Suppose $R$ is a commutative ring with identity and $\D$ is a replete subcategory of $\C$.

\begin{notadef}
Let $F$ be a biset functor on $\D$ with values in $_R\text{Mod}$. For any object $G$ of $\D$, we set $F^<(G)$, to be the $E_G$-submodule
\[F^<(G):=\sum_{|H|<|G|}\Hom_{R\D}(H,G)F(H)\subseteq F(G),\]
where $H$ runs through objects of $\D$ which have smaller cardinality than $G$. If $G$ is an object of $\D$, such that 
$F^<(G)\subsetneq F(G)$, then we say that $G$ is \emph{residual with respect to $F$}.\\

It follows by Remark~\ref{elementaryBisetPresentation} that 
\[F^<(G)=\sum_{H}\Hom_{R\D}(H,G)F(H)\]
where $H$ runs over subquotients of $G$.
Thus, if $\D$ and $\D'$ are replete subcategories of $\C$ such that $\D\subset \D'$, then the residual objects of $\D$ are just the residual objects of $\D'$ that are in $\D$.
 
We denote by $\mathcal{R}_{\D,F}$ a set of representatives, up to isomorphism, of objects in $\D$ which are residual with respect to $F$
and call $\mathcal{R}_{\D,F}$ a \emph{complete set of residuals} for $F$ in $\D$.
\end{notadef}
In the next proposition, we use the notation $``H\prec G"$, which means $H$ is isomorphic to a subquotient of $G$.

\begin{prop}\label{completeSetRes}
Suppose $\D$ is a replete subcategory of $\C$ and $F$ is a biset functor on $R\D$. Let $\mathcal{R}_{\D,F}$ and $\mathcal{R}'_{\D,F}$ be complete sets of residuals for $F$ in $\D$. If $G\in \mathcal{R}_{\D,F}$ then $\mathcal{R}'_{\D,F}$ has an object $G'$ which is isomorphic to $G$. Furthermore, for
any object $G\in \D$,
\[F(G)=\sum_{\underset{H\prec G}{H\in \mathcal{R}_{\D,F}}}\Hom_{R\D}(H,G)F(H).\]
\end{prop}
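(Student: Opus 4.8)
The plan is to get the first assertion essentially for free from the definitions and then to prove the displayed formula by strong induction on $|G|$.

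For the first assertion, I would note that whether an object $G$ is residual with respect to $F$ is an intrinsic property of the pair $(\D,F)$ — it is the condition $F^<(G)\subsetneq F(G)$ — and makes no reference to a choice of complete set of residuals. By definition both $\mathcal{R}_{\D,F}$ and $\mathcal{R}'_{\D,F}$ are sets of representatives, up to isomorphism, of exactly the residual objects of $\D$; since $G\in\mathcal{R}_{\D,F}$ is residual, there must be a $G'\in\mathcal{R}'_{\D,F}$ with $G'\cong G$.

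For the displayed formula I would set $F'(G):=\sum_{H\in\mathcal{R}_{\D,F},\,H\prec G}\Hom_{R\D}(H,G)F(H)\subseteq F(G)$ and prove the reverse inclusion $F(G)\subseteq F'(G)$ by strong induction on $|G|$. If $G$ is residual, choose $G_0\in\mathcal{R}_{\D,F}$ together with an isomorphism $f:G_0\to G$; then $\Iso(f)\in\Hom_{R\D}(G_0,G)$ is invertible with inverse $\Iso(f^{-1})$, so $F(\Iso(f))$ carries $F(G_0)$ isomorphically onto $F(G)$, and since $G_0\prec G$ this already gives $F(G)\subseteq\Hom_{R\D}(G_0,G)F(G_0)\subseteq F'(G)$. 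If $G$ is not residual, then $F(G)=F^<(G)$, and by the remark identifying $F^<(G)$ with a sum over subquotients of $G$ (which rests on the factorization of transitive bisets through subquotients, Remark~\ref{elementaryBisetPresentation}), $F(G)=\sum_{H}\Hom_{R\D}(H,G)F(H)$ with $H$ ranging over proper subquotients of $G$, in particular over objects with $|H|<|G|$. Applying the induction hypothesis to each such $H$ and composing morphisms, each term $\Hom_{R\D}(H,G)\Hom_{R\D}(K,H)F(K)$ with $K\in\mathcal{R}_{\D,F}$, $K\prec H$ lies in $\Hom_{R\D}(K,G)F(K)$, and $K\prec H\prec G$ forces $K\prec G$ by transitivity of $\prec$. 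Substituting gives $F(G)\subseteq F'(G)$, completing the induction; the trivial group provides the (vacuous) base case.

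The only step that I expect to need genuine care is the non-residual case: the groups $H$ appearing in the defining sum for $F^<(G)$ are a priori merely of smaller order, not subquotients of $G$, so one cannot pass from $K\prec H$ to $K\prec G$ without first replacing them by subquotients of $G$. That replacement is exactly what the presentation of $\C$ by elementary bisets supplies, and once it is in place everything else is routine bookkeeping with the functoriality of $F$ and the transitivity of $\prec$.
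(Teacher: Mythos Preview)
Your proposal is correct and follows essentially the same approach as the paper, which simply says the first statement holds because residuality is isomorphism-invariant and the second ``follows from easy induction on the order of $G$ and the definition of residual with respect to $F$.'' You have unpacked exactly that induction, correctly isolating the one nontrivial point (using Remark~\ref{elementaryBisetPresentation} to replace arbitrary smaller $H$ by proper subquotients of $G$ so that transitivity of $\prec$ applies).
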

\begin{proof}
If $G$ is a residual object of $\D$ with respect to $F$, then any group isomorphic to $G$ will also be residual with respect to $F$. The second statement follows from easy induction on the order of $G$ and the definition of residual with respect to $F$.
\end{proof}

Suppose $\D\subset \D'$. If $\mathcal{R}_{\D,F}$ is a complete set of residuals for $F$ in $\D$, then there is a complete set of residuals $\mathcal{R}_{\D',F}$ for $F$ in $\D'$
such that 
$\mathcal{R}_{\D,F}\subset \mathcal{R}_{\D',F}$. Also, for any object $G$ of $\D$, we always have that
\[\sum_{1\neq N\unlhd G}\text{Im}F(\Inf^{G}_{G/N})\subseteq I_GF(G)\subseteq F^{<}(G)\subseteq F(G),\]
as $R$-modules.

We now consider the biset functor $B^\times\in \F_{\C'}$ where $\C'$ is the subcategory of $\C$ from Remark~\ref{specialSubcat}.
To be consistent with the usual biset notation, for any finite groups $G$, we have opted to denote the group law of $B^\times(G)$ additively throughout the rest of the paper.

\begin{rmk}\label{codimension1}
Here we recall a particularly useful consequence of Lemma~\ref{PhiLemma}. Suppose $G$ is pseudodihedral. Then
\[B^\times(G)/(\sum_{1\neq N\unlhd G}\mathrm{Im}\Inf^G_{G/N})\]
is generated by the image of $\Phi_G$, as an $\bF_2$-space. In particular, $B^\times(G)/(\sum_{1\neq N\unlhd G}\mathrm{Im}\Inf^G_{G/N})$ has $\bF_2$-dimension $1$. 
\end{rmk}

\begin{prop}\label{residImpliesQuasiD}
Suppose $G\in\C'$.
\begin{enumerate} 
\item[(a)] If $G$ is residual with respect to $B^\times$ then $G$ is trivial or pseudodihedral. 
\item[(b)] If $G$ 
is pseudodihedral, then $G$ is residual with respect to $B^\times$ if and only if 
\[(B^\times)^<(G)=\sum_{1\neq N\unlhd G}\mathrm{Im}\Inf^G_{G/N}.\]
\end{enumerate}
\end{prop}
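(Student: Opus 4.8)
The plan is to use throughout the inclusions $\sum_{1\neq N\unlhd G}\Im\Inf^G_{G/N}\subseteq (B^\times)^<(G)\subseteq B^\times(G)$ recorded just before the proposition, together with the principle that $G$ cannot be residual once $B^\times(G)$ is already generated by the images of morphisms factoring through proper subquotients of $G$. Part (a) will then follow by combining this with Proposition~\ref{oddIndexIso} (the $\Res^G_N$/$\Ten^G_N$ isomorphism for $N$ of odd index), Lemma~\ref{faithEltChar}(2) (the decomposition $B^\times(G)=\partial B^\times(G)\oplus\sum_{1<N\unlhd G}\Im\Inf^G_{G/N}$), and the classification of Proposition~\ref{abelianSubgroup}; part (b) will follow from Remark~\ref{codimension1}.

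For part (a) I would proceed in four reductions. \emph{Step 1:} a residual $G$ has no proper normal subgroup of odd index. Indeed, if $N\unlhd G$ with $N\neq G$ and $[G:N]$ odd, then $|N|<|G|$, so $\Im\Ten^G_N=B^\times(\Ind^G_N)\bigl(B^\times(N)\bigr)\subseteq (B^\times)^<(G)$; but Proposition~\ref{oddIndexIso} makes $\Ten^G_N$ surjective onto $B^\times(G)$, forcing $(B^\times)^<(G)=B^\times(G)$, a contradiction. \emph{Step 2:} taking $N=1$, a nontrivial odd-order group is never residual, so either $G$ is trivial (and we are done) or $|G|$ is even; and since $G\in\C'$, some normal subgroup of odd index has an abelian subgroup of index at most $2$, which by Step~1 must be $G$ itself, so $G$ has an abelian subgroup of index $1$ or $2$. \emph{Step 3:} if $G$ is abelian then it is nontrivial, and Lemma~\ref{abelianGeneratedBy-1} gives $B^\times(G)=B(G,\{1\})\Phi_{\{1\}}$; that is, $B^\times(G)$ is the image of morphisms from the proper subgroup $\{1\}$, so $B^\times(G)=(B^\times)^<(G)$, again a contradiction. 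Hence $G$ is nonabelian with an abelian subgroup of index $2$. \emph{Step 4:} residuality forces $\partial B^\times(G)\neq 0$, for otherwise Lemma~\ref{faithEltChar}(2) would give $B^\times(G)=\sum_{1<N\unlhd G}\Im\Inf^G_{G/N}\subseteq (B^\times)^<(G)$. Applying Proposition~\ref{abelianSubgroup} to $G$ now leaves only the pseudodihedral alternative, the case $G\cong C_2$ being excluded by Step~3.

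For part (b), put $W:=\sum_{1\neq N\unlhd G}\Im\Inf^G_{G/N}$. By Remark~\ref{codimension1}, for pseudodihedral $G$ the space $B^\times(G)/W$ is one-dimensional over $\bF_2$, so $W$ is a maximal proper $\bF_2$-subspace of $B^\times(G)$; combined with $W\subseteq (B^\times)^<(G)\subseteq B^\times(G)$ this forces $(B^\times)^<(G)$ to equal either $W$ or $B^\times(G)$. Therefore $(B^\times)^<(G)=W$ is equivalent to $(B^\times)^<(G)\subsetneq B^\times(G)$, i.e.\ to $G$ being residual, which is exactly the claimed equivalence.

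The main point to get right is the interplay with Proposition~\ref{abelianSubgroup}: it also permits $\partial B^\times(G)\neq 0$ when $G\cong C_2$ (and vacuously when $G$ is trivial), so one must separately observe that $C_2$, and indeed any nontrivial abelian group, fails to be residual — which is precisely what Step~3 supplies via Lemma~\ref{abelianGeneratedBy-1} — while the nontrivial odd-order members of $\C'$ are eliminated in Step~1 rather than by any faithfulness consideration. Beyond that I do not expect a genuine obstacle; the remainder is formal manipulation with the elementary-biset presentation of Remark~\ref{elementaryBisetPresentation} and the inclusions above.
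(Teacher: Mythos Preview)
Your argument is correct and follows the same route as the paper's proof: reduce via the odd-index tensor-induction isomorphism to the case where $G$ itself has an abelian subgroup of index at most $2$, eliminate the abelian possibilities (including $C_2$) via Lemma~\ref{abelianGeneratedBy-1}, and invoke Proposition~\ref{abelianSubgroup} for the remaining case; for (b) both you and the paper simply read off the equivalence from the codimension-one remark. Your write-up is in fact more explicit than the paper's---you spell out that residuality forces $\partial B^\times(G)\neq 0$ via Lemma~\ref{faithEltChar}(2), a step the paper leaves implicit when it cites Proposition~\ref{abelianSubgroup}.
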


\begin{proof}
By Theorem~\ref{abelianSubgroup} and Corollary~\ref{oddIndexCor}, $G$ must be trivial, isomorphic to $C_2$,
or pseudodihedral group. But by Lemma~\ref{abelianGeneratedBy-1}, $C_2$ is not residual.

For any finite group $G$, we always have
\[\sum_{1\neq N\unlhd G}\text{Im}\Inf^G_{G/N}\subseteq (B^\times)^<(G) \subseteq B^\times(G).\]
When $G$ is pseudodihedral, it follows by Remark~\ref{codimension1} that $G$ is residual if and only if $(B^\times)^<(G)=\sum_{1\neq N\unlhd G}\text{Im}\Inf^G_{G/N}$.
\end{proof}

\begin{rmk}\label{quasiMax}
Suppose $G$ is a pseudodihedral group and let $N$ denote its unique abelian subgroup (the unique cyclic subgroup when $G\cong D_8$) of index $2$. 
If $H$ is any maximal subgroup of $G$ other than $N$, then $H=\langle x,N\cap H\rangle$, where $x\in G-N$ and $N\cap H$
is a maximal subgroup of $N$. Further, conjugation by $x$ inverts elements of $N\cap H$. So $H$ is pseudodihedral as long as $|N\cap H|>2$. 
Notice that $N\cap H$ is a maximal
subgroup of $N$. Therefore,
all maximal subgroups of $G$, other than $N$, are also pseudodihedral, as long as $\frac{|G|}{2}=|N|$ is not a prime or twice a prime.
\end{rmk}

\begin{prop}\label{tensoringReduction}
Let $G$ be a pseudodihedral group. Let $N<G$ denote the unique abelian subgroup (the unique cyclic subgroup when $G\cong D_8$) with index $2$. 
If $G$ is not residual with respect to $B^\times$ and $|N|$ is not prime or twice a prime, then there is some pseudodihedral maximal subgroup $H<G$,
for which $\Ten_H^G(\Phi_H)$ and $\Phi_G$ have the same image in 
$B^\times(G)/(\sum_{1\neq K\unlhd G}\mathrm{Im}\Inf^G_{G/K})$.
\end{prop}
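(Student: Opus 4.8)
The plan is to use the hypothesis that $G$ is not residual to pull the generator $\Phi_G$ back into the span of morphisms from proper subquotients, and then to analyze which subquotient can actually carry the relevant piece. Since $G$ is pseudodihedral and not residual, Proposition~\ref{residImpliesQuasiD}(b) tells us that $(B^\times)^<(G)$ strictly contains $\sum_{1\neq K\unlhd G}\mathrm{Im}\,\Inf^G_{G/K}$; in fact, by Remark~\ref{codimension1}, the quotient $Q:=B^\times(G)/\bigl(\sum_{1\neq K\unlhd G}\mathrm{Im}\,\Inf^G_{G/K}\bigr)$ is one-dimensional over $\bF_2$, generated by the image of $\Phi_G$, and $(B^\times)^<(G)$ must surject onto $Q$. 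So $\Phi_G$ is congruent, modulo the inflation part, to a sum of elements of the form $T_U(v)$ where $U$ ranges over transitive bisets factoring through proper subquotients $H\prec G$ and $v\in B^\times(H)$. Using the elementary-biset presentation of Remark~\ref{elementaryBisetPresentation}, each such $U$ factors as $\Ind^G_A\circ\Inf^A_{A/B}\circ\Iso(f)\circ\Def^D_{C/D}\circ\Res^H_C$; the deflation and inflation pieces in the middle produce, by Lemma~\ref{PhiRestriction} (last statement, $B(S,G)\Phi_G = B^\times(S)$ for subquotients $S$) and its analogue for the source, elements that are already in the image of inflation unless the subquotient involved is itself pseudodihedral and we land on a faithful element.

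The key reduction is then: modulo $\sum_{1\neq K\unlhd G}\mathrm{Im}\,\Inf^G_{G/K}$, the image of $\Phi_G$ is a sum of terms of the form $\Ten^G_A(\Phi_A)$ with $A$ a pseudodihedral proper subgroup of $G$ — any genuinely smaller subquotient appearing through a nontrivial deflation contributes, after composing with $\Ind^G_A$ and using Remark~\ref{ebisetTranCom}(2)(b) (induction commutes with inflation), something lying in $\mathrm{Im}\,\Inf^G_{G/K}$ for a suitable $1\neq K\unlhd G$, hence dies in $Q$. Since $Q$ is one-dimensional and the image of $\Phi_G$ is nonzero in it, at least one such $\Ten^G_A(\Phi_A)$ must have nonzero image in $Q$, i.e. must equal the image of $\Phi_G$ there. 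Finally, by transitivity of induction, $\Ten^G_A(\Phi_A) = \Ten^G_H\bigl(\Ten^H_A(\Phi_A)\bigr)$ for any intermediate $A\leq H\leq G$, and by Lemma~\ref{PhiRestriction} applied inside $H$ together with Lemma~\ref{abelianGeneratedBy-1}, $\Ten^H_A(\Phi_A)$ lies in $B^\times(H)$, which in turn (if $H\ne N$) is spanned by inflations of $\Phi$'s of its pseudodihedral quotients plus $\Ten^H_{H'}(-1)$-type terms; choosing $H$ to be a maximal subgroup of $G$ containing $A$ and invoking Remark~\ref{quasiMax} (maximal subgroups other than $N$ are pseudodihedral since $|N|$ is not prime or twice a prime), we may replace $A$ by such a pseudodihedral maximal $H$. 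Tracking which basis element of $B^\times(H)$ (in the sense of Theorem~\ref{basisofab2}) can survive to $Q$ under $\Ten^G_H$ shows the surviving term must be exactly $\Ten^G_H(\Phi_H)$.

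The main obstacle I anticipate is the bookkeeping in the previous paragraph: namely, verifying carefully that when one expands $\Phi_G$ via an arbitrary element of $(B^\times)^<(G)$ and pushes everything through the biset presentation, every contribution except possibly $\Ten^G_H(\Phi_H)$ for a pseudodihedral maximal $H$ genuinely lands in $\sum_{1\neq K\unlhd G}\mathrm{Im}\,\Inf^G_{G/K}$. This requires (i) handling the abelian subquotients uniformly — here Lemma~\ref{abelianGeneratedBy-1} says $B^\times$ of an abelian group is generated by tensor-inductions of $-1$, and $\Ten^G_L(-1)$ for $L\leq N$ will be shown to lie in the inflation part because $L$ is contained in the normal subgroup $N$; (ii) handling the case where the subquotient is pseudodihedral but the intermediate group $H$ one is forced to induce through is $N$ itself (abelian), which again falls under (i); and (iii) confirming, via Theorem~\ref{basisofab2} applied to $H$ and to $G$ and the explicit fixed-point description of $\Phi_G$ in Remark~\ref{PhiCharacterization}, that $\Ten^G_H(\Phi_H)$ does not itself lie in $\sum_{1\neq K\unlhd G}\mathrm{Im}\,\Inf^G_{G/K}$ — equivalently, that its image in $Q$ is the nonzero class. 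Once this compatibility is pinned down, the one-dimensionality of $Q$ forces the conclusion immediately.
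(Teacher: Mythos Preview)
Your overall strategy matches the paper's: reduce modulo inflations to tensor inductions from maximal subgroups, eliminate $H=N$ using $\partial B^\times(N)=0$, and for pseudodihedral maximal $H$ reduce an arbitrary $u\in B^\times(H)$ to $\Phi_H$. The paper's route is more direct than yours, however: rather than first reducing to $\Ten^G_A(\Phi_A)$ for an arbitrary pseudodihedral proper $A$ and then lifting to a maximal $H$, the paper immediately uses transitivity of tensor induction to pass to a maximal $H$ and then analyzes $B^\times(H)$ there. This avoids your case analysis of abelian subquotients entirely.

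There is one genuine technical step you gloss over. To show that $\Ten^G_H\bigl(\Inf^H_{H/S}(\Phi_{H/S})\bigr)$ lands in $\sum_{1\neq K\unlhd G}\mathrm{Im}\,\Inf^G_{G/K}$ for $1\neq S\unlhd H$, you invoke Remark~\ref{ebisetTranCom}(2)(b), but that commutation requires the normal subgroup to be normal in $G$, not merely in $H$. The missing observation (which the paper supplies) is that any nontrivial $S\unlhd H$ satisfies $S\cap N\neq\{1\}$: otherwise $S$ is a noncentral subgroup of order $2$, whose normalizer in $G$ is $Z(G)S$ of order at most $4$, contradicting $H\leqslant N_G(S)$ with $H$ pseudodihedral. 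Since $S\cap N\leqslant N$ it is normal in $G$, and one replaces $S$ by $S\cap N$ before applying the commutation.

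Your point (iii) is logically confused. You do \emph{not} need to verify directly that $\Ten^G_H(\Phi_H)$ is nonzero in $Q$; that is the \emph{conclusion}, and it follows because you have already shown $\Phi_G\equiv\sum_H c_H\,\Ten^G_H(\Phi_H)$ in $Q$ with the left side nonzero. Attempting to verify it directly for a given $H$ would in fact fail in general: Claim~(iv) in the proof of the next proposition shows that when $|N_2|>2$ one has $\Ten^G_{H_I}(\Phi_{H_I})$ equal to the trivial element of $B^\times(G)$, so its image in $Q$ is zero. The proposition only asserts existence of \emph{some} pseudodihedral maximal $H$ with the desired property, and that existence is forced by the one-dimensionality of $Q$ together with steps (i)--(ii), not by any separate computation.
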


\begin{proof}
Since $G$ is not residual, it follows from Remark~\ref{codimension1} that there is some proper subgroup
$H<G$ and an element $u\in B^\times(H)$, such that $\Ten^G_H(u)$ and $\Phi_G$ have the same image in
\[B^\times(G)/ (\sum_{1\neq K\unlhd G}\mathrm{Im}\Inf^G_{G/K}).\]
By the transitivity of induction, we can assume that $H$ is a maximal subgroup of $G$. If $H=N$, then 
$\partial B^\times(N)$ is trivial by Corollary~\ref{trivialFaithAb}, since $|N|>2$.
Thus by Lemma~\ref{faithEltChar} we can write
\[u=\sum_{i=1}^k\Inf^N_{N/N_i}(u_i),\]
where $N_1,\dots, N_k$ are nontrivial normal subgroup of $N$ and $u_i\in B^\times(N/N_i)$, for each $i=1,\dots, k$.
Each subgroup of $N$ is normal in $G$, since $G$ is pseudodihedral. So by Remark~\ref{ebisetTranCom}
\[\Ten^G_H(u)=\Ten^G_H(\sum_{i=1}^k\Inf^N_{N/N_i}(u_i))=\sum_{i=1}^k\Inf^G_{G/N_i}\Ten^{G/N_i}_{N/N_i}(u_i).\]
Hence $\Ten_H^G(u)$ is trivial in $B^\times(G)/ (\sum_{1\neq K\unlhd G}\Im\Inf^G_{G/K})$, a contradiction. We can now assume $H\neq N$ and so it
is pseudodihedral by assumption (see Remark~\ref{quasiMax}).

We want to show that $\Ten_H^G(u)$ and $\Ten_H^G(\Phi_H)$ have the same image 
modulo $\sum_{1\neq K\unlhd G}\mathrm{Im}\Inf^G_{G/K}$. To this end, we first show that $u\not\in \sum_{1\neq S\unlhd H}\Im\Inf^H_{H/S}$.
For the sake of contradiction, suppose $u\in \sum_{1\neq S\unlhd H}\Im\Inf^H_{H/S}$. If $S$ is any nontrivial normal subgroup of $H$, 
then $S\cap N$ cannot be trivial. Otherwise, $S$ is a non-central subgroup of order two and the only elements that normalize $S$ in $G$, other than $S$, will be central in $G$.
This would imply $|H|\leq4$ which contradicts $H$ being pseudodihedral. So, by the 
transitivity of inflation
\[u\in \sum_{1\neq S\unlhd H}\Im\Inf^H_{H/(S\cap N)},\]
where each $S\cap N$ is a nontrivial normal subgroup of $G$.
So again by Remark~\ref{ebisetTranCom}, we have
\[\Ten^G_H(u)\in\sum_{1\neq K\unlhd G}\Im\Inf^G_{G/K},\]
which contradicts that $\Ten_H^G(u)$ and $\Phi_G$ have the same image 
modulo $\sum_{1\neq K\unlhd G}\mathrm{Im}\Inf^G_{G/K}$.

This allows us to assume that $u=\Phi_H+u'$ where $u'\in \sum_{1\neq S\unlhd H}\Im\Inf^H_{H/S}$.
However, as above, we have
\[\Ten^G_H(u')\in\sum_{1\neq K\unlhd G}\Im\Inf^G_{G/K}.\]
This implies that $\Ten_H^G(u)$ and $\Ten_H^G(\Phi_H)$ have the same image 
modulo $\sum_{1\neq K\unlhd G}\mathrm{Im}\Inf^G_{G/K}$ and the result follows.
\end{proof}

Together with Proposition~\ref{residImpliesQuasiD}, the next proposition determines which objects in $\C'$ are residual with respect to $B^\times$. 
It will be used in the next section when we characterize the lattice of subfunctors of $B^\times$ over $\C'$ and find its composition factors.

\begin{rmk}\label{D8}
Before we prove the following proposition, we remark that $D_8$ is not residual. This follows from the unnecessarily strong machinery given by Theorem $8.5$ in \cite{B2}.
It can also be proved easily from straightforward calculations similar to the case in Proposition~\ref{residualGroupsAbIndex2}, where we
consider groups isomorphic to $D_{2p}$, with $p$ an odd prime. We consider this case settled and do not include it in the next proposition to simplify the characterization.
\end{rmk}

\begin{prop}\label{residualGroupsAbIndex2}
Suppose $G$ is a pseudodihedral group not isomorphic to $D_8$. Let $N$ denote the unique abelian subgroup of $G$ with index $2$.
\begin{enumerate}
\item[(A)] When $N\cong C_p$, the group $G$ is not residual with respect to $B^\times$ if and only if $p\equiv 3\, (\text{mod} \,\,4)$.
\item[(B)] When $N$ is not simple, the group $G$ is not residual with respect to $B^\times$ if and only if $|N_2|=2$ or $N$ has an element of order $p^2$ for some odd prime $p$.
\end{enumerate}
\end{prop}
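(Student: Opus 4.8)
The plan is to reduce everything to a computation inside $B^\times(G)/\bigl(\sum_{1\neq K\unlhd G}\mathrm{Im}\,\Inf^G_{G/K}\bigr)$, which by Remark~\ref{codimension1} is one-dimensional over $\bF_2$ and spanned by the image of $\Phi_G$, and to decide in each case whether $\Ten^G_H(\Phi_H)$ hits that class for some pseudodihedral maximal subgroup $H$. By Proposition~\ref{residImpliesQuasiD}(b) and Proposition~\ref{tensoringReduction}, $G$ is non-residual if and only if such an $H$ exists (when $|N|$ is not prime or twice a prime, which is exactly the regime where all maximal subgroups other than $N$ are pseudodihedral — see Remark~\ref{quasiMax} — and this includes all cases in (B) and the $p>3$ part of (A)); the leftover small cases $N\cong C_p$, $N\cong C_{2p}$, etc., must be handled by the direct calculation alluded to in Remark~\ref{D8}.

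For part (A), with $N\cong C_p$, $p$ odd, $G\cong D_{2p}$; the only proper subgroups are $C_p$, the trivial group, and the $p$ conjugate copies of $C_2=\langle x\rangle$. Since $\partial B^\times(C_p)$ is trivial and $\partial B^\times(\{1\})$, $\partial B^\times(C_2)$ are each generated by $\Phi$, the only candidate tensor-inductions are $\Ten^G_{C_2}(\Phi_{C_2})$ and $\Ten^G_{\{1\}}(-1)$; by transitivity the second factors through the first, so it suffices to compute $\Ten^G_{C_2}(\Phi_{C_2})$ via formula~\eqref{tenInduct} and read off its $\langle x\rangle$- and $\{1\}$-coordinates in the ghost ring. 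One finds the coefficient of $[G/1]$ in the Burnside expansion governed by whether $\frac{p-1}{2}$ is even or odd, i.e. by $p\bmod 4$: when $p\equiv 3\pmod 4$ the class of $\Ten^G_{C_2}(\Phi_{C_2})$ equals the class of $\Phi_G$ (so $G$ is non-residual), and when $p\equiv 1\pmod 4$ it lands in $\sum\mathrm{Im}\,\Inf$ (so $G$ is residual). This parity computation is the heart of (A) and I expect it to be a short but genuinely arithmetic check.

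For part (B), $N$ is abelian, not simple, with $N_2$ cyclic. First dispose of $|N|$ prime or twice a prime: $|N|$ prime is excluded since $N$ is not simple, and $|N|=2p$ forces $N_2\cong C_2$, which is one of the asserted non-residual cases and is again settled by the $D_{2p}$-type direct calculation as in Remark~\ref{D8}. So assume $|N|$ is neither, and apply Proposition~\ref{tensoringReduction}: $G$ is non-residual iff some pseudodihedral maximal $H<G$ with $H\neq N$ has $\Ten^G_H(\Phi_H)\equiv\Phi_G$. Such $H$ is of the form $\langle x, N'\rangle$ with $N'$ a maximal (index-2) subgroup of $N$ on which $x$ acts by inversion, hence $H$ is itself pseudodihedral with abelian part $N'$. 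The key technical point: I would compute, using~\eqref{tenInduct} together with Lemma~\ref{PhiRestriction} to understand $\Res$ of the relevant primitive idempotents, the image of $\Ten^G_H(\Phi_H)$ modulo inflations, and show it equals the $\Phi_G$-class precisely when one can choose $N'$ so that the Mackey/double-coset count produces an odd multiplicity on the noncentral order-2 subgroups of $G$. This odd-count condition should translate into: either $N_2\cong C_2$ (so $N=N_2\times N_{2'}$ and a suitable index-2 subgroup of $N_{2'}$ works), or $N$ has an element of order $p^2$ for an odd prime $p$ (so $N_{2'}$, hence $N$, has a subgroup $C_{p^2}\to C_p$ giving the needed non-characteristic index-$p$ structure) — matching the statement.

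The main obstacle is the case analysis inside part (B): one must show both directions, i.e. that \emph{every} pseudodihedral maximal $H\neq N$ gives a class inside $\sum\mathrm{Im}\,\Inf$ when $|N_2|>2$ and $N$ has no element of order $p^2$ (the "residual" direction — an exhaustion over all index-2 subgroups $N'$ of $N$, controlling the double-coset sum in~\eqref{tenInduct} for each), and conversely exhibit one good $H$ in the two listed cases. The first direction is where the structural hypotheses ($N_2$ cyclic of order $>2$, exponent of $N$ squarefree away from $2$) must be used to force the parity count to vanish; I expect this to require carefully tracking which subgroups $x^{-1}Lx\cap H$ arise as $L$ ranges over noncentral order-2 subgroups of $G$ and $x$ over double-coset representatives, and then invoking that $N$ decomposes compatibly with the $C_2$-action by Lemma~\ref{C2action}(i) (inversion) so that the combinatorics reduces to counting in the abelian group $N$ itself.
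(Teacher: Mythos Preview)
Your overall strategy---reduce to whether some $\Ten^G_H$ hits the class of $\Phi_G$ modulo inflations---is the paper's, but two steps would actually fail.

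In Part~(A) your reduction to $\Ten^G_{\langle x\rangle}(\Phi_{\langle x\rangle})$ alone is wrong. The element $-1\in B^\times(\langle x\rangle)$ is inflated from the trivial quotient, but $\langle x\rangle$ is \emph{not} normal in $D_{2p}$, so you cannot commute $\Ten^G_{\langle x\rangle}$ past that inflation; hence $\Ten^G_{\langle x\rangle}(-1)$ need not lie in $\sum_{1\neq K\unlhd G}\Im\Inf^G_{G/K}$. In fact a direct ghost-ring check gives $\Ten^G_{\langle x\rangle}(\Phi_{\langle x\rangle})=(1,-1,1,-1)$, which is always an inflation independently of $p$, while $\Ten^G_{\langle x\rangle}(-1)=(-1,(-1)^r,-1,-1)$ with $r=|\langle x\rangle\backslash G/\langle x\rangle|=1+\tfrac{p-1}{2}$ is precisely the element carrying the $p\bmod 4$ dependence. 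Your proposed computation would therefore declare every $D_{2p}$ residual. The paper avoids this by simply computing $\Ten^G_H(-1)$ for all four subgroups $H\leq G$ and reading off when they span $B^\times(G)$.

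In Part~(B), a pseudodihedral maximal subgroup $H\neq N$ satisfies $[G:H]=[N:H\cap N]$, which is an arbitrary prime dividing $|N|$, not just $2$; your repeated restriction to ``index-$2$ subgroups $N'$ of $N$'' misses every maximal subgroup of odd index (and ``index-$2$ subgroup of $N_{2'}$'' is impossible since $|N_{2'}|$ is odd). The paper's proof separates the two regimes. For the residual direction: if $H$ has odd prime index $p$ and $N_p$ is elementary abelian of rank $m$, one shows $\Ten^G_H(\Phi_H)=\sum_i\Inf^G_{G/Q_i}(\Phi_{G/Q_i})$ where $Q_1,\dots,Q_{p^{m-1}}$ are the complements of $H\cap N$ in $N$; if $H\in\{H_I,H_J\}$ has index $2$ and $|N_2|>2$, one shows $\Ten^G_H(\Phi_H)$ is the identity. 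For the non-residual direction: when $N$ has an element of order $p^2$ the witness $H$ has index $p$ (odd) and one gets $\Ten^G_H(\Phi_H)=\Phi_G$ exactly; when $|N_2|=2$ the witness is $H_I=\langle x,N_{2'}\rangle$ of index $2$, and one finds $\Ten^G_{H_I}(\Phi_{H_I})+\Inf^G_{G/N_2}(\Phi_{G/N_2})=\Phi_G$, so the tensor induction alone is not $\Phi_G$ but only congruent to it modulo an explicit inflation.
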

\begin{proof}
We frequently make use of (\ref{tenInduct}) from Remark~\ref{tensorInductionComp}. We write it again, for 
easy reference. If $H\leqslant G$ and $a\in B^\times(H)$ then
\begin{equation}\label{retensor}
\Ten^G_H(a)=\left(\prod_{g\in[L\backslash G/H]}|a^{(g^{-1}Lg\cap H)}|\right)_{L\in \sS_G},
\end{equation}
where $\sS_G$ is a set of representatives of conjugacy classes of subgroups of $G$.
Write $G=C_2\ltimes N$ and denote the generator of $C_2$ by $x$.
Since $G$ is pseudodihedral recall that $N_2$ is cyclic and $x$ acts on $N$ by inversion. 
We split the rest of the proof into the two cases based on whether or not $N$ is simple.\\

\noindent{\bf Part (A):}\\\\
We first consider the case where $N=C_p$ for some prime $p$. Then $p\neq2$ since $G$ is pseudodihedral. By Theorem~\ref{basisofab2},
$|B^\times(G)|=2^3$. By Lemma~\ref{abelianGeneratedBy-1} and Lemma~\ref{quotientSubConnection}, for any abelian group $L$, we have
\[B^\times(L)=B(L,\{1\})\Phi_{\{1\}}=\langle\Ten_H^L(-1)\rangle_{H\leqslant L}.\]
Since each proper subgroup of $G$ is abelian, and induction is transitive, the group $G$ is not residual if and only if
\[B^\times(G)=B(G,\{1\})\Phi_{\{1\}}=\langle\Ten_H^G(-1)\rangle_{H\leqslant G}.\]
$G$ has $4$ conjugacy classes of subgroups. Denote $\sS_G=\{\{1\}, \langle x\rangle, C_p,G\}$ a set of representatives of these classes. By 
$(3)$ (ordering $\sS_G$ by increasing cardinality) we compute
\[\Ten_{\{-1\}}^G(-1)=\begin{pmatrix}1\\-1\\1\\-1\end{pmatrix},\quad\Ten_{\langle x\rangle}^G(-1)=\begin{pmatrix}-1\\(-1)^r\\-1\\-1\end{pmatrix}\]
\[\Ten_{C_p}^G(-1)=\begin{pmatrix}1\\-1\\1\\-1\end{pmatrix},\quad\Ten_G^G(-1)=\begin{pmatrix}-1\\-1\\-1\\-1\end{pmatrix}\]
where $r$ is equal to the number of double cosets in $\langle x\rangle\backslash G /\langle x\rangle$. Explicitly, by $(3)$
\[|\Ten_{\langle x\rangle}^G(-1)^{\langle x\rangle}|=\prod_{g\in[\langle x\rangle\backslash G/\langle x\rangle]}(-1)=(-1)^{|\langle x \rangle \backslash G/\langle x\rangle|}\]
Thus $|B(G,\{1\})(-1)|=|B^\times(G)|$
if and only if $r=|\langle x \rangle \backslash G/\langle x\rangle|=1+\frac{p-1}{2}$ is even. This happens if and only if $p\equiv 3\, (\text{mod} \,\,4)$. Hence, $G$ is 
not residual with respect to $B^\times$ if and only if $p\equiv 3\, (\text{mod} \,\,4)$.\\

\noindent{\bf Part (B):}\\\\
We now consider the case where $N$ is not simple. When $|N_2|= 2$ or $N$ contains an element of order $p^2$ for some odd prime $p$, we will establish that there
is some proper subgroup $H<G$ and some $w\in B^\times(H)$, such that $\Ten_H^G(w)$ and $\Phi_G$ have the same image in 
$B^\times(G)/\sum_{1\neq K\unlhd G}\Im\Inf_{G/K}^G$. On the other hand, if
$|N_2|\neq2$ and $N$ has no element of order $p^2$, for any odd prime $p$,
then Remark~\ref{quasiMax} and Proposition~\ref{tensoringReduction} reduces proving $G$ is residual
to showing $\Ten_H^G(\Phi_H)$ is contained in $\sum_{1\neq K\unlhd G}\Im\Inf_{G/K}^G$, for all pseudodihedral maximal subgroups $H$.

For the rest of the proof, we let $z$ denote a generator of $N_2$ and set $I=\langle x\rangle$ and $J=\langle zx\rangle$. Recall that
if $N_2$ is nontrivial, then $I$ and $J$ are representatives of the two conjugacy classes of noncentral subgroups of order two in $G$.
In this case, the group $N$ has a unique maximal subgroup of index $2$, which we denote as $N'$. 
Further, the group $G$ has exactly two non-abelian maximal subgroups of index $2$. 
These are $H_I:=\langle x,N'\rangle$ and $H_J:=\langle xz,N'\rangle$. Note that since $G\not\cong D_8$, both $H_I$ and $H_J$ are pseudodihedral.
(Also note that in the case where $N_2$ is trivial, $I=J$, and $I$ is a representative of the unique conjugacy class of noncentral subgroups of order two in $G$)\\

\noindent{\bf Claim (i):} \emph{If $N$ has an element of order $p^2$ for some odd prime $p$, then $G$ is not residual with respect to $B^\times$.}\\

Fix $y\in N_p$ to have maximal order, $p^n$. Let $C$ be a complement of $\langle y\rangle $ in $N$. Set
$H'=\langle y^p\rangle\times C$ and $H=\langle x,H'\rangle$. By construction $H'$ has no complement in $N$. 
Since $H'$ is a maximal subgroup of $N$ we have that $H$ is a maximal subgroup of $G$. Fix a subgroup $X\leqslant G$. We need to establish
that $|\Ten_H^G(\Phi_H)^X|=|\Phi_G^X|$. 

Note again that $H'$ has no complement in $N$. Furthermore, any nontrivial subgroup $S\leqslant N$
will intersect with $H'$, hence $H$, nontrivially. Thus, for any $g\in G$, if $^gX\cap H$ is a noncentral subgroup of order $2$ in $H$, then
$X$ is already a noncentral subgroup of order $2$ of $G$. So by $(3)$, we have
\[|\Ten_H^G(\Phi_H)^X|=1\]
unless $X$ is a noncentral subgroup of order $2$. 

Since $H$ has index $p$ in $G$ and $p\neq 2$,
then both $I$ and $J$ are subgroups of $H$. Suppose $X=I$. Then $g^{-1}Xg\cap H\neq \{1\}$ if and only if $g^{-1}xg=xg^2\in H$. Because $H$ has odd index in $G$, the element
$xg^2\in H$ if and only if $g\in H$. Thus $g^{-1}Xg\cap H\neq \{1\}$ if and only
if $XgH=XH$ as double cosets in $X\backslash G/H$. Again by $(3)$,
\[|\Ten_H^G(\Phi_H)^X|=|\Phi_H^X|=-1.\]
When $X=J$, the argument is similar and we get
\[|\Ten_H^G(\Phi_H)^X|=|\Phi_H^X|=-1.\]
Thus, $|\Ten_H^G(\Phi_H)^X|=-1$ if and only if $X\leqslant G$ is a noncentral subgroup of order $2$.
By Remark~\ref{PhiCharacterization}, $|\Ten_H^G(\Phi_H)^X|=|\Phi_G^X|$ for any $X\leqslant G$. Thus $\Ten_H^G(\Phi_H)=\Phi_G$. Therefore $G$ is not residual.\\

\noindent{\bf Claim (ii):} \emph{If $|N_2|=2$, then $G$ is not residual with respect to $B^\times$.}\\

Since $N_2$ is not trivial $H_I$ and $H_J$ are defined and we set $v=\Ten_{H_I}^G(\Phi_{H_I})$. Note also that $N_{2'}$ is nontrivial, since $|N_2|=2$ and $|N|>2$ by $G$ being pseudodihedral. 
Fix a subgroup $X$ of $G$. We wish to compute $|v^X|$. 
If $X$ is trivial then it is clear by (\ref{retensor}) that $|v^X|=1$. Suppose $X$ is nontrivial and
$X\cap N_{2'}\neq\{1\}$. Since $X\cap N_{2'}=\,^g(X\cap N_{2'})\leqslant\,^gX\cap H$ for any $g\in G$, 
it follows that $^gX\cap H$ is not a noncentral subgroup of order $2$ of $H$, for any $g\in G$. So $|v^X|=1$, by $(\ref{retensor})$.

Now suppose $X$ is nontrivial and $X\cap N_{2'}=\{1\}$. Then $X$ is conjugate to $N_2$, $I$, $J$, or $JN_2=IN_2$. 
We apply (\ref{retensor}) to each of these subgroups. For $N_2$ it is easy to verify that
\[|v^{N_2}|=|\Phi_{H_I}^{(N_2\cap H_I)}|=|\Phi_{H_I}^{\{1\}}|=1.\]
For $I$, notice that for any $g\in G$, $g^{-1}Ig\cap H_I=g^{-1}Ig$, since $H_I$ is normal in $G$ and $I< H_I$.
Since $|I\backslash G/H_I|=|G/H_I|=2$, we have that $|v^I|=1$.
Considering $J$, notice that $g^{-1}Jg\cap H_I=\{1\}$ for any $g\in G$, since $J\not< H_I$, thus $|v^J|=1$.
Lastly, for $IN_2$, notice that $|IN_2\backslash G/H_I|=1$. Hence,
\[|v^{IN_2}|=|\Phi_{H_I}^{IN_2\cap H_I}|=|\Phi_{H_I}^{I}|=-1.\]
Therefore, $v$ is the unique element of
$B^\times(G)$ characterized by $|v^X|=-1$ if and only if $X\leqslant G$ is in the same conjugacy class as $IN_2$.

We now consider the element $u=\Inf_{G/N_2}^G(\Phi_{G/N_2})$ and show that $\Phi_G=u+v\in B^\times(G)$ (noting that $G/N_2$ is pseudodihedral, thus $\Phi_{G/N_2}$ is defined.) Recall
that inflation between unit groups of Burnside rings comes from restricting the inflation ring morphism between Burnside rings. 
Thus, for any $X\leqslant G$, we have $|u^X|=|\Phi_{G/N_2}^{(XN_2/N_2)}|=-1$ if and only if 
the image of $X$ through the canonical projection $G\to G/N_2$ is a noncentral subgroup of order $2$. 
In other words, $|u^X|=-1$ if and only if $X$ is conjugate to $I$, $J$, or $IN_2$. 
This implies that $v+u\in B^\times(G)$ is the unique element characterized by $|(u+v)^X|=-1$ if and only if $X$ is a noncentral subgroup of order $2$. 
It follows that $u+v=\Phi_{G}$, by Remark~\ref{PhiCharacterization}. Therefore $G$ is not residual with respect to $B^\times$.
(We note that $u+v=\Phi_{G}$ would still be true if we had set $v=\Ten_{H_J}^G(\Phi_{H_J})$.)\\

\noindent{\bf Claim (iii):} \emph{If $p$ is an odd prime that divides $N_{2'}$ and $N$ has no element of order $p^2$, 
then $\Ten_H^G(\Phi_H)$ is contained in $\sum_{1\neq K\unlhd G}\Im\Inf_{G/K}^G$, for any subgroup $H<G$ of index $p$.}\\

Suppose $H\leqslant G$ has index $p$. Denote $H'=H\cap N$, then $H'$ is a maximal subgroup of $N$ with index $p$. Furthermore, there is some $n\in N_p$ 
such that $H=\langle xn, H'\rangle$. So for $h=n^{\frac{1-p}{2}}$, we have $\,^hH=\langle x, H'\rangle$, since $^h(xn)=xn^{1+p-1}=x$. If $c_h:G\to G$ denotes the automorphism induced by conjugation with $h$, then
\[\Iso(c_h)\circ \Ten_H^G(\Phi_H)=\Ten_{c_h(H)}^G(\Iso(c_h)(\Phi_H))=\Ten_{c_h(H)}^G(\Phi_{c_h(H)}),\]
since the element $\Iso(c_h)(\Phi_H)\in B^\times(c_h(H))$ is clearly a nontrivial faithful element and thus $\Iso(c_h)(\Phi_H)=\Phi_{c_h(H)}$. It follows that
$\Ten_{H}^G(\Phi_{H})$ is in the image of $\sum_{1\neq K\unlhd G}\Im\Inf_{G/K}^G$ if and only if $\Ten_{c_h(H)}^G(\Phi_{c_h(H)})$ is in the image of
$\sum_{1\neq K\unlhd G}\Im\Inf_{G/K}^G$. So it is sufficient for us to assume $H=\langle x, H'\rangle$ and show that 
$\Ten_{H}^G(\Phi_{H})$ is in the image of $\sum_{1\neq K\unlhd G}\Im\Inf_{G/K}^G$.

Write $|N_p|=p^m$. Our assumption implies that $N_p$ is an elementary abelian $p$-group of rank $m$. 
Thus, every maximal subgroup of $N_p$ has 
$\frac{(p^m-1)-(p^{m-1}-1)}{p-1}=p^{m-1}$ complements in $N_p$. 
It follows that every maximal subgroup of $N$
with index $p$ has $p^{m-1}$ complements in $N$. Let $Q_1,\cdots, Q_{p^{m-1}}$ denote the complements of $H'$ in $N$. 

We again use (\ref{retensor}) to compute $\Ten_{H}^G(\Phi_{H})$. Let $X\leqslant G$.
If $X\in\{\{1\}, Q_1,\dots, Q_{p^{m-1}}\}$, then it is easy to compute that $|\Ten_{H}^G(\Phi_{H})^X|=1$.
If $X\cap H'$ is nontrivial, then $^g(X\cap H')\leqslant\ \,^gX\cap H$ is not a noncentral subgroup of order $2$ of $H$ for any $g\in G$.
Thus again, $|\Ten_H^G(\Phi_H)^X|=1$. 
The for the remaining cases, we may assume $X\cap H'=\{1\}$ and $X\not\leqslant N$. It is straightforward to verify this implies $X$
is conjugate to one of the subgroups in the set $\{I, IQ_1,\dots, IQ_{p^{m-1}},J,JQ_{1},\dots,JQ_{p^{m-1}}\}$.

Since $I\leqslant H$, we argue as in the proof of claim (i) that $g^{-1}Ig\cap H\neq\{1\}$ if and only if $IgH=IH$ as double cosets in $I \backslash G/H$. Thus
\[|\Ten_{H}^G(\Phi_{H})^I|=-1,\]
and similarly 
\[|\Ten_{H}^G(\Phi_{H})^J|=-1.\]
Suppose that $X\in \{IQ_1,\dots, IQ_{p^{m-1}},JQ_{1},\dots,JQ_{p^{m-1}}\}$. Then $|X\backslash G/H|=1$, and thus
\[|\Ten_{H}^G(\Phi_{H})^X|=|\Phi_{H}^{X\cap H}|=-1,\]
since $X\cap H$ is either $I$ or $J$.

In summary, $\Ten_H^G(\Phi_H)$ is the element of $B^\times(G)$ characterized by $|\Ten_H^G(\Phi_H)^X|=-1$ if and only if
$X$ is conjugate to one of the subgroups in the set \\$\{I, IQ_1,\dots, IQ_{p^{m-1}},J,JQ_{1},\dots,JQ_{p^{m-1}}\}$. 

Now consider the element
\[\omega=\sum_{i=1}^{p^{m-1}}\Inf_{G/Q_i}^G(\Phi_{G/Q_i})\in B^\times (G)\]
Then $|\omega^{IQ_i}|=|\omega^{JQ_i}|=-1$ for all $i=1,\dots,p^{m-1}$. Also, $|\omega^{I}|=|\omega^{J}|=(-1)^{p^{m-1}}=-1$. For any other subgroup $X\leqslant G$
that is not conjugate with $I,J, IQ_i$, or $JQ_i$ for $i=1,\dots, p^m$, it is easy to verify that $|\omega^X|=1$.
Thus $|\omega^X|=|\Ten_H^G(\Phi_H)^X|$ for any subgroup $X\leqslant G$, which implies that $\omega=\Ten_H^G(\Phi_H)$. In particular, we have shown that 
$\Ten_H^G(\Phi_H)$ is an element of $\sum_{1\neq K\unlhd G}\Im\Inf_{G/K}^G$.\\

\noindent{\bf Claim (iv):} \emph{If $|N_2|>2$ then $\Ten_{H_I}^G(\Phi_{H_I})$ and $\Ten_{H_J}^G(\Phi_{H_J})$ are the trivial element in $B^\times(G)$.}\\

We prove this for $\Ten_{H_I}^G(\Phi_{H_I})$ and note that the proof is nearly the same for $\Ten_{H_J}^G(\Phi_{H_J})$. Fix $X\leqslant G$.
It suffices to show that $|\Ten_{H_I}^G(\Phi_{H_I})^X|=1$.
Notice that since $|N_2|>2$, this implies that $H_I\cap N$ has no complement in $N$. 
So if there is an element $g\in G$ such that $^gX\cap H_I$ is noncentral of order $2$ in $H_I$,  then $X\cap N$ is trivial, hence $X$ is noncentral of order $2$ in $G$.
If $|\Ten_{H_I}^G(\Phi_{H_I})^X|=-1$, then there is such a $g\in G$, hence $X$ is noncentral of order $2$.
However, by $(3)$ we can verify that $|\Ten_{H_I}^G(\Phi_{H_I})^I|=1$ and $|\Ten_{H_I}^G(\Phi_{H_I})^J|=1$ (for the same reasons as in claim (ii)). 
This means that $|\Ten_{H_I}^G(\Phi_{H_I})^X|=1$ and we are done.\\\\

Claims (i) and (ii) prove that if $|N_2|=2$ or $N$ has an element of order $p^2$ for some odd prime $p$, then $G$ is not residual. On the other hand
when $|N_2|\neq 2$ and $N$ has no element of order $p^2$, claims (iii) and (iv) prove that 
$\Ten_H^G(\Phi_H)$ is contained in $\sum_{1\neq K\unlhd G}\Im\Inf_{G/K}^G$, for all pseudodihedral maximal subgroups $H$. Therefore,
applying Proposition~\ref{tensoringReduction} implies $G$ is residual with respect to $B^\times$.

\end{proof}

Consequentially, we get the following theorem.

\begin{theorem}\label{residualGroups}
Let $\mathcal{R}$ denote the set of finite groups consisting of the trivial group, dihedral groups $D_{2p}$
where $p$ is prime and $p\equiv 1\, (\text{mod} \,\,4)$, and all pseudodihedral groups $C_2\ltimes N$, excluding $D_8$, where $N$ is not simple, $|N_2|\neq2$, and $N_{2'}$ has no element
of order $q^2$, for any odd prime $q$. Then $\mathcal{R}$ is a complete set of residuals for $B^\times$ in $\C'$. Moreover, for any $G\in C'$,
\[B^\times(G)=\sum_{\underset{H\prec G}{H\in \mathcal{R}}}B(G,H)\Phi_H.\]

\end{theorem}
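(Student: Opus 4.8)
The plan is to prove the theorem in two steps: first identify $\mathcal{R}$ with a complete set of residuals for $B^\times$ in $\C'$, and then deduce the displayed formula directly from Proposition~\ref{completeSetRes}. Essentially everything needed has already been assembled, so the proof will be a bookkeeping argument organizing Proposition~\ref{residImpliesQuasiD}, Proposition~\ref{residualGroupsAbIndex2}, Remark~\ref{D8}, and Lemma~\ref{PhiRestriction}.

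\emph{Step 1.} I would show that, up to isomorphism, the residual objects of $\C'$ with respect to $B^\times$ are exactly the groups in $\mathcal{R}$. For the forward direction, suppose $G\in\C'$ is residual with respect to $B^\times$. By Proposition~\ref{residImpliesQuasiD}(a), $G$ is trivial or pseudodihedral. If $G$ is trivial, then $G\in\mathcal{R}$. If $G$ is pseudodihedral, then $G\not\cong D_8$ by Remark~\ref{D8}, so its unique abelian subgroup $N$ of index $2$ is well defined; since $|N|>2$, either $N\cong C_p$ with $p$ an odd prime, or $N$ is not simple, and these two cases are exactly the hypotheses of parts (A) and (B) of Proposition~\ref{residualGroupsAbIndex2}. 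In the first case Proposition~\ref{residualGroupsAbIndex2}(A) gives $p\not\equiv 3\pmod 4$, hence $p\equiv 1\pmod 4$ and $G\cong D_{2p}\in\mathcal{R}$; in the second case Proposition~\ref{residualGroupsAbIndex2}(B) forces $|N_2|\neq 2$ and $N$ (equivalently $N_{2'}$, since $N_2$ is cyclic) to contain no element of order $q^2$ for an odd prime $q$, so again $G\in\mathcal{R}$. For the reverse direction: the trivial group is residual because it has no proper subquotient while $B^\times(\{1\})=\{\pm1\}$ is nonzero; every $D_{2p}$ with $p\equiv 1\pmod 4$ is pseudodihedral with $N\cong C_p$ and is residual by Proposition~\ref{residualGroupsAbIndex2}(A) (note $2p\neq 8$); and every pseudodihedral $C_2\ltimes N$ listed in $\mathcal{R}$ satisfies $G\not\cong D_8$, $N$ not simple, $|N_2|\neq 2$, and $N_{2'}$ without elements of order $q^2$, so is residual by Proposition~\ref{residualGroupsAbIndex2}(B). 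Hence $\mathcal{R}$ is a set of representatives, up to isomorphism, of the residual objects of $\C'$, i.e.\ a complete set of residuals for $B^\times$ in $\C'$.

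\emph{Step 2.} Proposition~\ref{completeSetRes} applied to $F=B^\times$ on $\C'$ then gives, for every $G\in\C'$,
\[B^\times(G)=\sum_{\underset{H\prec G}{H\in\mathcal{R}}}B(G,H)\,B^\times(H),\]
so it remains only to see that each summand equals $B(G,H)\Phi_H$. When $H$ is trivial, $B^\times(H)=\{0,\Phi_{\{1\}}\}$, so the claim is immediate (cf.\ Lemma~\ref{abelianGeneratedBy-1}). When $H$ is pseudodihedral — the remaining members of $\mathcal{R}$ — the last assertion of Lemma~\ref{PhiRestriction} with $S=H$ yields $B(H,H)\Phi_H=B^\times(H)$, and therefore, by associativity of composition of bisets,
\[B(G,H)\,B^\times(H)=B(G,H)\,B(H,H)\,\Phi_H\subseteq B(G,H)\,\Phi_H\subseteq B(G,H)\,B^\times(H),\]
so $B(G,H)B^\times(H)=B(G,H)\Phi_H$. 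Substituting gives $B^\times(G)=\sum_{H\in\mathcal{R},\,H\prec G}B(G,H)\Phi_H$, as claimed.

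Since the genuine analytic work — deciding which pseudodihedral groups are residual — is already done in Proposition~\ref{residualGroupsAbIndex2} and Remark~\ref{D8}, I do not expect a real obstacle here; the points requiring care are the elementary ones: checking that the dichotomy ``$N$ simple / $N$ not simple'' together with the exclusion of $D_8$ exhausts the pseudodihedral case, correctly negating the ``not residual'' criteria of Proposition~\ref{residualGroupsAbIndex2}, and invoking the final clause of Lemma~\ref{PhiRestriction} to replace $B^\times(H)$ by $\Phi_H$ inside the sum (together with the trivial-group case, where $B^\times$ is already generated by $\Phi_{\{1\}}$).
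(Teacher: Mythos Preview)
Your proof is correct and follows exactly the same route as the paper: it cites Remark~\ref{D8}, Proposition~\ref{residImpliesQuasiD}, and Proposition~\ref{residualGroupsAbIndex2} to identify $\mathcal{R}$ as a complete set of residuals, and then combines Proposition~\ref{completeSetRes} with Lemma~\ref{PhiRestriction} to obtain the displayed formula. The only difference is that you have spelled out the bookkeeping (the dichotomy on $N$, the negation of the ``not residual'' criteria, and the replacement of $B^\times(H)$ by $\Phi_H$) that the paper leaves implicit.
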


\begin{proof}
That $\mathcal{R}$ is a complete set of residuals follows by Remark~\ref{D8}, Proposition~\ref{residImpliesQuasiD}, and Proposition~\ref{residualGroupsAbIndex2}.
The last statement follows from Proposition~\ref{completeSetRes} and Lemma~\ref{PhiRestriction}.
\end{proof}

\section{Subfunctors of $B^\times$}

The goal of this section is to parametrize the subfunctors of $B^\times$ over $\C'$. We can do this in terms of the residual groups
found in the previous section. For the rest of the section, we let $\mathcal{R}_{\C',B^\times}$ be a complete set of residuals for $B^\times$ in $\C'$.

\begin{nota}\label{subsOfBx}
For any $I\subset \mathcal{R}_{\C',B^\times}$, set $F_I\subseteq B^\times$ to be the subfunctor generated by $\{\Phi_X|X\in I\}$. In other words, $F_I$ is
the subfunctor of $B^\times$ such that, for any $G\in \C'$
\[F_I(G)=\sum_{X\in I}B(G,X)\Phi_X,\]
if $I$ is nonempty and $F_I$ is the trivial biset functor if $I$ is empty. Note that we may construct $F_I\subseteq B^\times\in \F_\D$, where $\D$ is any replete subcategory
of $\C$.
\end{nota}

Recall that when $I=\mathcal{R}_{\C',B^\times}$, Theorem~\ref{residualGroups} implies $F_I=B^\times\in\mathcal{F}_{\C'}$.
If $I\subset\mathcal{R}_{\C',B^\times}$, we denote 
\[\overline{I}:=\{X\in \mathcal{R}_{\C',B^\times}|X\prec H\in I\},\]
i.e, $\overline{I}$ consists of the elements of $\mathcal{R}_{\C',B^\times}$ which are isomorphic to subquotients of elements of $I$. We
say $\overline{I}$ is the \emph{residual subquotient closure} of $I$.
If $I=\overline{I}$ then
$I$ is said to be \emph{closed under residual subquotients}. Recall that $\mathcal{R}_{\C',B^\times}$ consists of the trivial group and certain
pseudodihedral groups. Thus if $X\in I$ then every quotient of $X$ is isomorphic to a subgroup of $X$. So if we had defined $\overline{I}$ to be the subset of $\mathcal{R}_{\C',B^\times}$ consisting of elements isomorphic to subgroups of elements from $I$, it would
result in the same set.

\begin{theorem}\label{subParam}
Suppose $F$ is a subfunctor of $B^\times\in \F_{\C'}$. Set
\[I_F:=\{X\in \mathcal{R}_{\C',B^\times}|F(X)=B^\times(X)\},\]
then $F_{I_F}=F$ and $I_F=\overline{I_F}$. 
Moreover, if $\mathcal{A}=\{\overline{J}|J\subset \mathcal{R}_{\C',B^\times}\}$ and $\mathcal{B}$ is the set of subfunctors of $B^\times$ over $\C'$, let
$a:\mathcal{A}\to \mathcal{B}$ be the map sending $J\mapsto F_J$ and $b:\mathcal{B}\to \mathcal{A}$ the map sending $F\to I_F$, then $a$ and $b$ are isomorphisms of posets,
inverse to each other.
\end{theorem}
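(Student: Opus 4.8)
The proof breaks into two parts: first, showing that for an arbitrary subfunctor $F$ we have $F = F_{I_F}$ and $I_F = \overline{I_F}$; second, deducing from this that $a$ and $b$ are mutually inverse poset isomorphisms. I would carry out the first part as the technical core and treat the second part as a formal wrap-up.

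\textbf{Step 1: $I_F$ is closed under residual subquotients.} Suppose $X \in I_F$, so $F(X) = B^\times(X)$, and let $Y \in \mathcal{R}_{\C',B^\times}$ with $Y \prec X$. By Lemma~\ref{PhiRestriction} (the last clause, $B(Y,X)\Phi_X = B^\times(Y)$, valid since $Y$ is a subquotient of the pseudodihedral-or-trivial group $X$), we get $B^\times(Y) = B(Y,X)\Phi_X \subseteq B(Y,X)F(X) \subseteq F(Y)$, using that $F$ is a subfunctor and $\Phi_X \in F(X)$. Hence $F(Y) = B^\times(Y)$ and $Y \in I_F$. This gives $\overline{I_F} \subseteq I_F$, and the reverse inclusion is trivial, so $I_F = \overline{I_F}$.

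\textbf{Step 2: $F = F_{I_F}$.} The inclusion $F_{I_F} \subseteq F$ is immediate: for each $X \in I_F$ we have $\Phi_X \in B^\times(X) = F(X)$, so the subfunctor generated by $\{\Phi_X : X \in I_F\}$ lies inside $F$. For the reverse inclusion, fix $G \in \C'$ and $u \in F(G)$; I want $u \in F_{I_F}(G) = \sum_{X \in I_F} B(G,X)\Phi_X$. By Theorem~\ref{residualGroups}, $B^\times(G) = \sum_{H \in \mathcal{R}_{\C',B^\times},\, H \prec G} B(G,H)\Phi_H$, so it suffices to show that each residual $H \prec G$ that actually contributes, i.e.\ each $H$ with $\Phi_H$-component of $u$ nonzero, lies in $I_F$. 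The key point is the ``codimension one'' structure from Remark~\ref{codimension1} together with the residual property (Proposition~\ref{residImpliesQuasiD}): for a residual group $H$, $(B^\times)^<(H) = \sum_{1\neq N\unlhd H}\mathrm{Im}\,\Inf^H_{H/N}$ is a hyperplane in $B^\times(H)$ not containing $\Phi_H$. I would argue by induction on $|G|$: writing $u$ via the decomposition of Theorem~\ref{residualGroups} and peeling off the contributions coming through proper subquotients (which land in $F^<$-type pieces and are handled by induction), one reduces to the case $G = H$ residual, where the claim is that if $u \in F(H)$ has nonzero image in $B^\times(H)/(B^\times)^<(H) \cong \mathbb{F}_2$, then $\Phi_H \in F(H)$ and hence $H \in I_F$. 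But nonzero image means $u = \Phi_H + (\text{element of } (B^\times)^<(H))$, and the $(B^\times)^<(H)$ part is a sum of terms $\alpha(v)$ with $\alpha \in B(H,K)$, $|K|<|H|$, $v \in B^\times(K)$; since $F$ is a subfunctor these terms lie in $F(H)$ provided the relevant $v$ can be taken in $F(K)$ — which is where one feeds in the inductive hypothesis applied to $F(K)$ and the residual decomposition of $v$. Subtracting, $\Phi_H \in F(H)$, so $F(H) = B^\times(H)$ (again by Lemma~\ref{PhiRestriction}/Remark~\ref{codimension1}, $\Phi_H$ together with $(B^\times)^<(H)$ spans), and $H \in I_F$.

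\textbf{Step 3: the poset isomorphism.} Both $a$ and $b$ are manifestly order-preserving: $J \subseteq J'$ gives $F_J \subseteq F_{J'}$ directly from the defining formula, and $F \subseteq F'$ gives $I_F \subseteq I_{F'}$ from the definition of $I_F$. Step 1--2 show $b \circ a$ and $a \circ b$ are identities on the appropriate domains: for $F \in \mathcal{B}$, $a(b(F)) = F_{I_F} = F$ by Step 2; for $J \in \mathcal{A}$, write $J = \overline{J}$, and I must check $I_{F_J} = J$. The inclusion $J \subseteq I_{F_J}$ is clear since $\Phi_X \in F_J(X)$ forces $F_J(X) = B^\times(X)$ for $X \in J$ (again Lemma~\ref{PhiRestriction}). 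For $I_{F_J} \subseteq J$: if $X \in \mathcal{R}_{\C',B^\times}$ with $F_J(X) = B^\times(X)$, then $\Phi_X \in \sum_{Y \in J} B(X,Y)\Phi_Y$; projecting modulo $(B^\times)^<(X)$ and using that for a residual $X$ the only term of $B(X,Y)\Phi_Y$ with $Y \prec X$ that can survive this projection is the one with $Y \cong X$ (the rest factor through proper subquotients), we conclude $X \cong Y$ for some $Y \in J$, so $X \in J$ as $J$ is closed under residual subquotients (indeed under isomorphism). Hence $I_{F_J} = J$, completing the proof.

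\textbf{Expected main obstacle.} The delicate point is Step 2's reduction to the residual case and making the induction airtight: one must carefully track that when $u \in F(G)$ is rewritten through Theorem~\ref{residualGroups}'s decomposition, every ``lower'' term genuinely lies in $F$ evaluated at a strictly smaller group (so that the inductive hypothesis applies) rather than merely in $B^\times$ of that group. This is exactly where the interplay between $F$ being a subfunctor, the residual property $(B^\times)^<(H) = \sum \mathrm{Im}\,\Inf$, and the $\mathbb{F}_2$-codimension-one fact must be used in concert; the rest is bookkeeping with the poset axioms.
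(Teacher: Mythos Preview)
Your Steps~1 and~3 are essentially the paper's arguments, and your identification of the obstacle in Step~2 is accurate---but your proposed resolution of it does not work. Writing $u=\Phi_H+w$ with $w\in (B^\times)^<(H)$ and then asserting that $w\in F(H)$ ``provided the relevant $v$ can be taken in $F(K)$'' is circular: the inductive hypothesis $F(K)=F_{I_F}(K)$ for $|K|<|H|$ gives information about $F(K)$ as a set, not about whether the particular elements $v\in B^\times(K)$ appearing in some chosen decomposition of $w$ happen to lie in $F(K)$. Those $v$'s are arbitrary elements of $B^\times(K)$, not elements obtained by applying biset morphisms to $u$, so there is no reason they belong to $F$. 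Without knowing $w\in F(H)$ you cannot subtract to conclude $\Phi_H\in F(H)$, and the argument stalls.

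The paper sidesteps this with the faithful-element machinery of Section~3. The idempotent $f_1^G\in B(G,G)$ is an \emph{integral} combination of compositions $\Inf^G_{G/M}\circ\Def^G_{G/M}$, so for any subfunctor $F$ one has $f_1^G F(G)\subseteq F(G)$; on the other hand $f_1^G F(G)=\partial F(G)\subseteq \partial B^\times(G)$, and for $G$ pseudodihedral the latter is $\{0,\Phi_G\}$. Thus if $\partial F(G)$ is nontrivial one gets $\Phi_G\in F(G)$ for free, after which Lemma~\ref{PhiRestriction} forces $F(S)=B^\times(S)$ for every subquotient $S\prec G$ and hence places all residual subquotients of $G$ into $I_F$. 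The paper packages this via Proposition~\ref{normChar}, reducing $F(G)=F_{I_F}(G)$ to $\partial F(G)=\partial F_{I_F}(G)$ and handling the remaining summands by induction on $|G|$. Two further points your sketch omits: the anchor $\{1\}\in I_F$ whenever $F$ is nontrivial (obtained by deflating and restricting any nontrivial $u$ to $\Phi_{\{1\}}$), and the passage from groups with an abelian subgroup of index at most~$2$ to arbitrary $G\in\C'$ via the odd-index isomorphism of Proposition~\ref{oddIndexIso}.
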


\begin{proof} If $F$ is trivial, then $I$ is empty and this case is clear. 
Suppose $F$ is nontrivial, there exists $X\in \C'$ and $u\in F(X)\subset B^\times(X)$, such that $u$ is nontrivial. 
Hence, there is a $K\leqslant X$, such that $|u^K|=-1$.
This implies that $\Def^K_{K/K}\circ\Res_K^X(u)=\Phi_{K/K}\in B(K/K,X)F(X)\subset F(K/K)\cong F(\{1\})$. Further, $F(\{1\})\subset B^\times(\{1\})$. 
However, since $\Phi_{\{1\}}\in B(\{1\})$ generates $B^\times(\{1\})$, we have $F(\{1\})=B^\times(\{1\})$. Thus $I$ contains the trivial group. 
Moreover, by Lemma~\ref{abelianGeneratedBy-1}, we have $F_I(X)=F(X)=B^\times(X)$ for abelian $X$.

It is clear that $F_I\subseteq F$. 
Fix a nonabelian object $G\in \C'$. We start with the case when $G$ has an abelian subgroup of index $2$ and show $F_I(G)=F(G)$. In this case,
Proposition~\ref{normChar} implies it is sufficient to show $\partial F_I(G)=\partial F(G)$, since having an abelian subgroup of index at most
$2$ is closed under taking quotients. For any object $X\in\C'$, we have
\[\partial F_I(X)\leqslant\partial F(X)\leqslant \partial B^\times(X).\] 
We also know that $\partial B^\times(G)$ is either trivial or has $\bF_2$-dimension $1$, by 
Lemma~\ref{PhiLemma} and Proposition~\ref{abelianSubgroup}. So we only need to show that when $\partial F(G)$ is nontrivial 
$\partial F_I(G)$ is also nontrivial. 

When $\partial F(G)$ is nontrivial, $\partial B^\times(G)$ is nontrivial. Moreover, since $G\neq \{1\}$, if $\partial F(G)$ is nontrivial, then
Proposition~\ref{abelianSubgroup} implies $G$ is pseudodihedral and $\Phi_G\in F(G)$. 
Thus, Proposition~\ref{PhiRestriction} tells us $B^\times(S)=B(S,G)\Phi_G\subset F(S)$, for any $S\prec G$. This implies
$F(S)=B^\times(S)$, for any $S\prec G$. So $I$ contains an isomorphic copy of any subquotient of $G$ that is residual with respect to $B^\times$. 
Hence, $\Phi_S\in F_I(S)$ for all $S\prec G$ that are residual with respect to $B^\times$. Thus, applying Theorem~\ref{residualGroups}
\[F(G)=B^\times(G)=\sum_{S}B(G,S)\Phi_{S}\subset F_I(G)\]
where $S$ runs over all the subquotients in $G$ which are residual with respect to $B^\times$. It follows that $F(G)=F_I(G)$ and in particular $\partial F(G)=\partial F_I(G)$. 

We have so far shown that $F(G)=F_I(G)$, whenever $G\in\C'$ has an abelian subgroup of index at most $2$.
If $G$ is a general object of $\C'$, then there is a normal subgroup $N\unlhd G$ with odd index in $G$, containing an abelian subgroup of index at most $2$. 
By the above
argument $F_I(N)=F(N)$. Since $F\subseteq B^\times$, the isomorphism in Proposition~\ref{oddIndexIso} given by $\Ten_N^G:B^\times(N)^{G/N}\to B^\times(G)$ 
restricts to an isomorphism $F(N)^{G/N}\to F(G)$. 
Since $F_I(N)^{G/N}=F(N)^{G/N}$, we have $F_I(G)=F(G)$.

That $I=\overline{I}$ follows from Proposition~\ref{PhiRestriction}. What is left is to establish that if $I\subset R_{\C',B^\times}$ is closed under residual subquotients,
and
\[I'=\{X\in \mathcal{R}_{\C',B^\times}|F_I(X)=B^\times(X)\},\]
then $I=I'$. Again, Proposition~\ref{PhiRestriction} implies that $I\subset I'$. If $I$ is empty, this is clear. 
Suppose $I$ is not empty and for the sake of contradiction that there is some $G\in I'$, such that $G\not \in I$.
Hence
\[B^\times(G)=F_I(G)=\sum_{X\in I}B(G,X)\Phi_X.\]
However, since $G$ is not isomorphic to a subquotient of any element of $I$, it follows from Remark~\ref{elementaryBisetPresentation}
that for any $\varphi\in B(G,X)$, we have $\varphi(\Phi_X)\in (B^\times)^<(G)$. But this implies that $B^\times(G)=(B^\times)^<(G)$,
which is a contradiction. Hence $I=I'$. It is easy to verify that this bijection is an isomorphism of posets.
\end{proof}

\begin{rmk}\label{subParamGen}
Theorem~\ref{subParam} has a formal generalization. If $\D$ is any replete subcategory of $\C$ contained in $\C'$, and $\mathcal{R}_{\D,B^\times}$ is a complete
set of residuals for $B^\times$ in $\D$ (for example, we can choose $\mathcal{R}_{\D,B^\times}$ to be the set of $G\in \mathcal{R}$ such that $G$ is an object of $\D$, where 
$\mathcal{R}$
is the set from Theorem~\ref{residualGroups}), then the lattice of subfunctors of $B^\times\in \F_{\D}$ is isomorphic as a poset to $\{\overline{J}|J\subset \mathcal{R}_{\D,B^\times}\}$.
The proof is the same, with the exception of using the symbol ``$\D$" whenever there is a ``$\C'$".
\end{rmk}

In Theorem $9.5$ of \cite{B2}, Bouc characterized the subfunctors of $B^\times$ over the class of $2$-groups (and through inflation, nilpotent groups).
In our terminology, Bouc showed that the lattice of subfunctors of 
$B^\times$ is uniserial and the nontrivial proper subfunctors are in bijection with the sets $\overline{\{D_{2^n}\}}$ for $n>3$. Together with Bouc's
result, Theorem~\ref{subParam} can be used to show the structure of the lattice of subfunctors of $B^\times$ over the full subcategory of $\C$, whose objects are nilpotent groups
or groups in $\C'$. Furthermore, it is easy to verify that the lattice of subfunctors of $B^\times$ is not uniserial over $\C'$. 
Additionally, Theorem~\ref{subParam} gives us a sufficient condition for when the biset functor $B^\times$, defined on a replete subcategory $\D\subset \C$ has uncountably
many subfunctors.

\begin{cor}\label{uncountableSubs}
Consider the following sets of groups: 
\[\mathcal{S}_0:=\{D_{2p}|\,\,p \text{ prime },\,\,p\equiv 1\, (\text{mod} \,\,4)\},\]
\[\mathcal{S}_1:=\{C_2\ltimes(C_p\times C_p)|\text{ $C_2\ltimes(C_p\times C_p)$ is pseudodihedral }, \\\,\,p \text{ prime }, \,\,p\equiv 3\, (\text{mod} \,\,4)\},\]
\[\mathcal{S}_2:=\{D_{2pq}|\,\,p\neq q; \,\,p, q \text{ prime }; \,\, p,q \equiv 3\, (\text{mod} \,\,4)\},\]
and
\[\mathcal{S}_r:=\{D_{2rp}|\,\,p \text{ prime }, \,\,p\equiv 3\, (\text{mod} \,\,4)\},\]
where $r=4$ or $r$ is a prime congruent to $1$ modulo $4$.
If $\D$ is a replete subcategory of $\C$ containing infinitely many objects from one of the sets $S_0,S_1,S_2$ or $S_r$, where $r=4$ or 
$r$ is a prime congruent to $1$ modulo $4$, then
$B^\times\in \F_{\D}$ has uncountably many subfunctors.
\end{cor}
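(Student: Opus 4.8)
The plan is to promote the injectivity argument in the proof of Theorem~\ref{subParam} to the subcategory $\D$, producing an injection from the power set of an infinite set into the lattice of subfunctors of $B^\times\in\F_{\D}$. Let $\mathcal{T}$ be the (infinite) set of objects of $\D$ that belong to whichever of $\mathcal{S}_0,\mathcal{S}_1,\mathcal{S}_2,\mathcal{S}_r$ the hypothesis provides; within each of these families distinct members have distinct orders, so $\mathcal{T}$ is an infinite set of pairwise non-isomorphic groups. Every group in these four families is pseudodihedral, so for each $X\in\mathcal{T}$ the element $\Phi_X$ is defined, and for $J\subseteq\mathcal{T}$ we may form, as in Notation~\ref{subsOfBx}, the subfunctor $F_J\subseteq B^\times$ over $\D$ generated by $\{\Phi_X\mid X\in J\}$, so that $F_J(G)=\sum_{X\in J}B(G,X)\Phi_X$ for every object $G$ of $\D$. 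Since $\mathcal{T}$ is infinite there are uncountably many such $J$, so it suffices to prove that $J\mapsto F_J$ is injective.

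First I would record two facts about the groups in $\mathcal{T}$. (1) Each $X\in\mathcal{T}$ is residual with respect to $B^\times$: comparing with the list $\mathcal{R}$ of Theorem~\ref{residualGroups}, this holds for $\mathcal{S}_0$ directly; for $\mathcal{S}_1$ because $N=C_p\times C_p$ is not simple, has $|N_2|=1\neq 2$, and (being elementary abelian) has no element of prime-squared order; for $\mathcal{S}_2$ because $N=C_{pq}$ with $p\neq q$ is not simple, has $|N_2|=1$, and has no element of prime-squared order; and for $\mathcal{S}_r$ because $N=C_{rp}$ is not simple, has $|N_2|\in\{1,4\}$, and $N_{2'}\in\{C_p,C_{rp}\}$ has no element of odd prime-squared order, while $|D_{2rp}|\geq 24$ rules out $D_8$. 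Moreover, since $(B^\times)^<(X)$ is, by Remark~\ref{elementaryBisetPresentation}, built only from the proper subquotients of $X$ — all cyclic, dihedral, or pseudodihedral and hence objects of $\C'$ — the residuality of $X$ and the proper inclusion $(B^\times)^<(X)\subsetneq B^\times(X)$ are the same whether read in $\F_{\C'}$ or in $\F_{\D}$. (2) For distinct $X,Y\in\mathcal{T}$, the order of $X$ (which is $2p$, $2p^2$, $2pq$, or $2rp$ in the four cases) determines all its parameters, and the order of a subquotient divides the order of the ambient group; a short arithmetic check then shows $X$ is not isomorphic to a subquotient of $Y$. Hence, exactly as at the end of the proof of Theorem~\ref{subParam}, Remark~\ref{elementaryBisetPresentation} forces every morphism $Y\to X$ in $R\D$ to factor through a common subquotient of $X$ and $Y$ of order strictly less than $|X|$, so $B(X,Y)\Phi_Y\subseteq (B^\times)^<(X)$.

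With these in hand, fix $X\in\mathcal{T}$ and evaluate $F_J$ at $X$. If $X\in J$, then $F_J(X)\supseteq B(X,X)\Phi_X=B^\times(X)$ by the last statement of Lemma~\ref{PhiRestriction}, so $F_J(X)=B^\times(X)$. If $X\notin J$, then fact (2) gives $F_J(X)=\sum_{Y\in J}B(X,Y)\Phi_Y\subseteq (B^\times)^<(X)$, which by fact (1) is a proper submodule of $B^\times(X)$. Thus $F_J(X)=B^\times(X)$ if and only if $X\in J$, so $F_J$ recovers $J$ and $J\mapsto F_J$ is injective. This produces uncountably many subfunctors of $B^\times\in\F_{\D}$.

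The step calling for the most care — and the reason one cannot simply invoke Remark~\ref{subParamGen} — is that $\D$ is only assumed to be a replete subcategory of $\C$, not of $\C'$, so the subfunctor classification of Theorem~\ref{subParam} is not directly available; the two facts above re-derive by hand exactly what is needed, and the point that makes everything go through is that all proper subquotients of the groups in our families already lie in $\C'$, so that $(B^\times)^<(X)$ and the residuality of $X$ are insensitive to the ambient replete subcategory.
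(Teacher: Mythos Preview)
Your argument is correct and follows essentially the same strategy as the paper: index subfunctors $F_J$ by subsets $J$ of the given infinite family and distinguish them by evaluation at a group in the symmetric difference, using that each such group is residual and that no two of them are subquotients of one another.

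One comment worth making: the paper's proof is terser, simply invoking Remark~\ref{subParamGen} to conclude $F_{I_\Pi}=F_{I_{\Pi'}}\iff I_\Pi=I_{\Pi'}$, whereas you noticed that Remark~\ref{subParamGen} is stated only for $\D\subset\C'$ and supplied a direct argument valid for any replete $\D\subset\C$. Your observation that all proper subquotients of the groups in $\mathcal{T}$ already lie in $\C'$, so that $(B^\times)^<(X)$ and hence residuality are insensitive to the ambient category, is exactly the point needed to bridge that gap; the paper leaves this implicit. Your explicit verification of facts~(1) and~(2) against the list in Theorem~\ref{residualGroups} and the order-divisibility arithmetic is also correct and fills in details the paper takes for granted.
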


\begin{proof}
Let $\mathcal{R}$ be the complete set of residuals with respect to $B^\times$ from Theorem~\ref{residualGroups}. 
Let $\{G_p\}_{p\in\mathcal{K}}$ be the assumed set of groups from the statement, whose elements are objects of $\D$, indexed by the infinite set $\mathcal{K}$. 
For any subset $\Pi\subset\mathcal{K}$, we denote 
\[I_{\Pi}=\overline{\{G_p\}_{p\in\Pi}}\subset \mathcal{R}.\]
For any two subsets $\Pi,\Pi'\subset \mathcal{K}$, we have $I_\Pi=I_{\Pi'}$ if and only if $\Pi=\Pi'$. It follows from Remark~\ref{subParamGen} that
$F_{I_\Pi}=F_{I_{\Pi'}}$ if and only if $I_{\Pi}=I_{\Pi'}$.
\end{proof}

In fact, the sufficient condition in Corollary~\ref{uncountableSubs} is also a necessary condition for any replete subcategory of $\C$ contained in $\C'$. However, before we
give its proof, we need a technical lemma about countability. We include a proof of it for completeness, though the reader may wish to skip it. 

Before we state the following lemma, we clarify our terminology. Consider a set $\mathcal{I}$ of $n$-tuples with entries from $\bN$ with the property that, 
if $(a_1,\dots, a_n)\in \mathcal{I}$ and $(b_1,\dots, b_n)$ is any other $n$-tuple with entries in $\bN$ such that $b_i\leq a_i$ for each $i=1,\dots, n$,
then $(b_1,\dots, b_n)\in \mathcal{I}$. In this case, we say that $\mathcal{I}$ is \emph{closed under the product ordering from below}. In the lemma, 
by $\bN$, we mean the subset of $\bZ$ consisting of the positive integers and $0$.

\begin{lemma}\label{prodClosedCountable}
Let $n$ be a positive integer. There are only countably many subsets of $\bN^n$ that are closed under the product ordering from below.
\end{lemma}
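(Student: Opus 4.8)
The plan is to prove this by induction on $n$, where the base case $n=1$ is essentially trivial and the inductive step analyzes how a downward-closed set $\mathcal{I}\subseteq\bN^n$ is built out of its ``slices'' in the last coordinate. First I would dispose of $n=1$: a subset of $\bN$ closed under the product ordering from below is either all of $\bN$ or an initial segment $\{0,1,\dots,k\}$, so there are countably many.

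For the inductive step, suppose the claim holds for $n-1$ and let $\mathcal{I}\subseteq\bN^n$ be closed under the product ordering from below. For each $j\in\bN$ define the slice
\[
\mathcal{I}_j:=\{(a_1,\dots,a_{n-1})\in\bN^{n-1}\mid (a_1,\dots,a_{n-1},j)\in\mathcal{I}\}.
\]
Each $\mathcal{I}_j$ is closed under the product ordering from below in $\bN^{n-1}$, and downward closure in the last coordinate forces $\mathcal{I}_0\supseteq\mathcal{I}_1\supseteq\mathcal{I}_2\supseteq\cdots$, with $\mathcal{I}$ completely determined by this descending chain. So it suffices to show there are only countably many descending chains $C_0\supseteq C_1\supseteq\cdots$ of downward-closed subsets of $\bN^{n-1}$. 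The key observation is that such a chain can only strictly decrease finitely many times: if $C_{j}\supsetneq C_{j+1}$, pick a point in the difference; by downward closure this witnesses that some bound on the possible ``size'' (e.g. the number of points of $C_j$ below any fixed coordinate box) drops — more precisely, I would argue that a strictly descending chain of downward-closed subsets of $\bN^{n-1}$ must terminate. This last fact is exactly the statement that the collection of downward-closed subsets of $\bN^{n-1}$, ordered by inclusion, satisfies the descending chain condition, which follows from Dickson's Lemma (equivalently, that $\bN^{n-1}$ is a well-quasi-order, so every downward-closed set is determined by the finite set of minimal elements of its complement, and a strictly descending chain of such sets would give a strictly increasing chain of complements, i.e. an infinite antichain-free increasing sequence — ruled out by Dickson).

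Given the DCC, every descending chain $C_0\supseteq C_1\supseteq\cdots$ stabilizes after finitely many steps, so it is determined by a finite initial segment $(C_0,\dots,C_m)$ together with the terminal value $C_m$ (which is already $C_m$), i.e. by a finite tuple of downward-closed subsets of $\bN^{n-1}$. By the inductive hypothesis there are countably many downward-closed subsets of $\bN^{n-1}$, hence countably many finite tuples of them, hence countably many descending chains, hence countably many such $\mathcal{I}$. This completes the induction.

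The main obstacle is the combinatorial heart of the argument: establishing that the poset of downward-closed subsets of $\bN^k$ has no infinite strictly descending chain. I would handle this via Dickson's Lemma — a downward-closed $D\subseteq\bN^k$ is determined by the (necessarily finite, by Dickson) antichain of minimal elements of $\bN^k\setminus D$, and a strictly descending chain $D_0\supsetneq D_1\supsetneq\cdots$ produces complements $E_0\subsetneq E_1\subsetneq\cdots$ that are themselves upward-closed; choosing $x_i\in D_{i}\setminus D_{i+1}\subseteq E_{i+1}$ with $x_i\notin E_i$ gives an infinite sequence in $\bN^k$ with no $x_i\leq x_j$ for $i<j$ (since $x_j\in E_j$ but $x_i\notin E_j$ when $i<j$ would be contradicted by upward closure), contradicting Dickson's Lemma. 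Once this is in place the rest is bookkeeping with countable unions.
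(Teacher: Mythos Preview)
Your argument is correct and takes a genuinely different route from the paper's. The paper compactifies $\bN^n$ to $(\bN_\infty)^n$ by adjoining a maximal element $\infty$ in each coordinate, extends each downward-closed $\mathcal{I}$ to a set $\mathcal{I}^*\subseteq(\bN_\infty)^n$ in which every point lies below a maximal element, and then proves by a self-contained inductive pigeonhole argument (on $n$) that $\mathcal{I}^*$ has only finitely many maximal elements; since $\mathcal{I}$ is recovered from this finite subset of a countable set, countability follows. You instead slice by the last coordinate, reduce to a descending chain of downward-closed subsets of $\bN^{n-1}$, and invoke Dickson's Lemma to establish DCC, then finish by induction. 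Your appeal to Dickson is the standard order-theoretic way to handle this; in fact, once you have Dickson you can bypass the slicing and the induction entirely: the complement of a downward-closed $D\subseteq\bN^n$ is upward-closed, hence determined by its finitely many minimal elements, so $D$ is determined by a finite antichain in $\bN^n$, of which there are countably many. One small slip: in your parenthetical justification of the bad sequence the roles of $x_i$ and $x_j$ are swapped --- for $i<j$ one has $x_i\in E_j$ and $x_j\notin E_j$, so $x_i\le x_j$ together with upward closure of $E_j$ would force $x_j\in E_j$ --- but the conclusion is correct.
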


\begin{proof}
We will prove this by showing that a nonempty subset of $\bN^n$, closed under the product ordering from below, can be parametrized by a finite subset
of  $(\bN_\infty)^n$, where $\bN_\infty:=\bN\cup\{\infty\}$ and $\infty$ is some maximal element, with respect to the regular ordering. Since
$(\bN_\infty)^n$ is countable, and the finite subsets of a countable set form a countable set, the result will follow.

For each $\mathcal{I}\subset\bN^n$ that is closed under the product ordering from below, there is a unique 
$\mathcal{I}^*=\mathcal{I}\cup\mathcal{I}_\infty\subset (\bN_\infty)^n$, where 
\[\mathcal{I}_\infty=\{{\bf x}\in (\bN_\infty)^n-\bN^n|\,\, {\bf y}<{\bf x}\,\, \implies\,\,{\bf y}\in \mathcal{I}, \forall {\bf y}\in \bN^n\}.\]
Note that if $\mathcal{I}$ is finite, then $\mathcal{I}_\infty=\emptyset$ and $\mathcal{I}^*=\mathcal{I}$. Furthermore, it is straightforward to verify that 
$\mathcal{I}^*$ is closed under the product ordering
from below in $(\bN_\infty)^n$ and $\mathcal{I}^*\cap \bN^n=\mathcal{I}$. Moreover, for every $(x_1,\dots, x_n)\in \mathcal{I}^*$, there is a maximal element
$(a_1,\dots, a_n)\in\mathcal{I}^*$ such that $(x_1,\dots, x_n)\leq (a_1,\dots, a_n)$.

Let $M$ be a set of elements, from $(\bN_\infty)^n$, that are pairwise noncomparable with respect to the product ordering. Set $\mathcal{I}_M$ to be the closure,
from below, with respect to the product ordering. If $\mathcal{I}\subset  \bN^n$ is closed under the product ordering from below, and $M$ is the set of maximal elements
in $\mathcal{I}^*$, our construction guarantees that $\mathcal{I}_M=\mathcal{I}^*$. Thus, it suffices to show that $\mathcal{I}^*$ has only finitely many maximal elements.

We do this inductively on $n$. If $n=1$, this is clear, since every nonempty proper subset of $\bN$, closed under the product ordering from below has one 
maximal element and $\bN^*=\bN_\infty$ also has one maximal element. 
For the general case, suppose $n>1$. Let $\mathcal{I}\subset\bN^n$ be a proper subset,
closed under the product ordering from below, then there is a maximal $a\in \bN$, such that $(a,\dots,a)\in \mathcal{I}^*$. 
For each $i\in \{1,\dots, n\}$ and each $m\in \{1,\dots, a\}$, let $I^m_i$ be the subset of $\mathcal{I}^*$ consisting of all $n$-tuples with $m$ in the $i$th component.
Notice that every element of $\mathcal{I}^*$ is contained in at least one $I^m_i$, for some $i\in\{1,\dots, n\}$ and $m\in \{0,\dots, a\}$.
Otherwise, there is an element $(x_1,\dots, x_n)\in \mathcal{I}^*$ where $(x_1,\dots, x_n)\not\in I^m_i$, for all $i=1,\dots,n$
and all $m=0,\dots, a$, hence $x_i>a$ for all $i$. But this means that $(a+1,\dots, a+1)\in \mathcal{I}^*$, which is a contradiction. 
Furthermore, for any $i=1,\dots,n$
and all $m=0,\dots, a$, the image of $I^m_i$ in $(\bN_\infty)^{n-1}$, created by deleting the $i$th entry in each element of $I^m_i,$ 
is closed under the product ordering from below. Thus, by induction, $I^m_i$ has finitely many maximal elements, all with the entry $m$ in the $i$th component. If
we denote by $M^m_i$ the set of maximal elements of the set $I^m_i$ for every $i=1,\dots,n$
and all $m=0,\dots, a$, and denote
\[M_{\mathcal{I}}=\bigcup_{1\leq i\leq n,0\leq m\leq a}I^m_i,\]
then $M_{\mathcal{I}}$ is a finite set. Moreover, $M_{\mathcal{I}}$ will contain all the maximal elements of $\mathcal{I}^*$, and the result follows.

\end{proof}

\begin{theorem}\label{converseUncountableSubs}
Suppose $\D$ is a replete subcategory of $\C'$. The biset functor $B^\times$ over $\D$ has uncountably many subfunctors if and only if $\D$ has infinitely many
objects from one of the sets $S_0,S_1,S_2$ or $S_r$, where $r=4$ or 
$r$ is a prime congruent to $1$ modulo $4$, from Corollary~\ref{uncountableSubs}.
\end{theorem}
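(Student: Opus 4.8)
The plan is to prove the contrapositive of the ``only if'' direction: if $\D$ contains only finitely many objects from each of the sets $\mathcal{S}_0, \mathcal{S}_1, \mathcal{S}_2$, and $\mathcal{S}_r$ (for $r=4$ or $r$ a prime $\equiv 1 \pmod 4$), then $B^\times$ over $\D$ has only countably many subfunctors; the ``if'' direction is exactly Corollary~\ref{uncountableSubs}. By Remark~\ref{subParamGen}, the subfunctors of $B^\times$ over $\D$ are in bijection with the subsets of $\mathcal{R}_{\D, B^\times}$ that are closed under residual subquotients, where we may take $\mathcal{R}_{\D, B^\times} = \mathcal{R} \cap \mathrm{Ob}(\D)$ with $\mathcal{R}$ the complete set of residuals from Theorem~\ref{residualGroups}. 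So it suffices to show that, under the finiteness hypothesis, there are only countably many such subsets.

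The first step is to organize $\mathcal{R}$ by the structure of its elements. Recall $\mathcal{R}$ consists of the trivial group, the groups $D_{2p}$ with $p \equiv 1 \pmod 4$ prime, and the pseudodihedral groups $G = C_2 \ltimes N$ (excluding $D_8$) with $N$ not simple, $|N_2| \neq 2$, and $N_{2'}$ having no element of order $q^2$ for any odd prime $q$. For such a $G$, the subgroup $N$ is determined up to isomorphism by its $2$-part $N_2$ (cyclic, and since $G \neq D_8$, either trivial or of order $\geq 4$, with the possibility $|N_2| \geq 4$ forced by $|N_2| \neq 2$ except when $N_2$ is trivial) and by the squarefree odd number $|N_{2'}| = q_1 \cdots q_k$ (distinct odd primes). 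I would partition $\mathcal{R}$ according to (a) the ``shape'' data that is \emph{bounded} under the finiteness hypothesis versus (b) the data that ranges over infinitely many primes. The key observation is that the subquotient relation ``$X \prec Y$'' among pseudodihedral groups in $\mathcal{R}$ translates, via Lemma~\ref{quotientSubConnection} and the analysis of Remark~\ref{quasiMax}, into a coordinatewise divisibility/order relation on the invariants $(|N_2|, q_1, \ldots, q_k)$ — essentially a product ordering on exponents of a fixed finite set of primes, \emph{plus} the $2$-adic valuation of $N_2$.

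The second step is to show that the finiteness hypothesis forces all but finitely many primes to be ``frozen.'' Suppose infinitely many primes $p$ occur among the odd prime factors of $N_{2'}$-parts of elements of $\mathcal{R} \cap \mathrm{Ob}(\D)$, or as indices in $D_{2p}$'s. For each such prime $p$, either $p \equiv 1 \pmod 4$ or $p \equiv 3 \pmod 4$. If $p \equiv 1 \pmod 4$ and $D_{2p} \in \D$, then $D_{2p} \in \mathcal{R}$ and contributes to $\mathcal{S}_0$. If $p \equiv 1 \pmod 4$ but $p$ only occurs as a factor of some larger $N_{2'}$, then writing $N_{2'} = p \cdot m$: if $m > 1$ pick an odd prime $q \mid m$, and $D_{2pq}$ (or the relevant pseudodihedral subquotient) lands in some $\mathcal{S}_r$; if $m = 1$ then $N_2$ must be nontrivial (else $G = D_{2p}$, handled), and one produces a pseudodihedral subquotient contradicting finiteness. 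If $p \equiv 3 \pmod 4$ occurring in $N_{2'}$: if $N_{2'}$ has another odd prime factor $q$ we get something in $\mathcal{S}_2$ (when $q \equiv 3$) or in $\mathcal{S}_r$ (when $q \equiv 1$); if $N_{2'} = p$ and $N_2$ trivial then $G = D_{2p}$ is \emph{not} in $\mathcal{R}$ (it's not residual), so this case doesn't arise; if $N_{2'} = p$ and $N_2 \cong C_p \times C_p$-type... — more carefully, one shows the only way to have infinitely many residual groups without hitting one of the $\mathcal{S}_i$ infinitely often is impossible. The upshot: there is a finite set $P$ of primes and a bound $B$ such that every $G \in \mathcal{R} \cap \mathrm{Ob}(\D)$ has $|N_{2'}|$ supported on $P$ and $|N_2| \leq B$.

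The final step is a counting argument. Once all invariants are supported on a fixed finite set of data, each $G \in \mathcal{R} \cap \mathrm{Ob}(\D)$ is encoded by a tuple in $\bN^n$ for a fixed $n$ (one coordinate for the $2$-adic valuation of $|N_2|$, which is actually bounded hence contributes only finitely many values, and one coordinate per prime in $P$, each an exponent $0$ or $1$ since $N_{2'}$ is squarefree — so in fact $\mathcal{R} \cap \mathrm{Ob}(\D)$ is itself finite!). Wait — I should be careful: the finiteness hypothesis as stated bounds objects in each $\mathcal{S}_i$, and I need to confirm it actually forces $\mathcal{R} \cap \mathrm{Ob}(\D)$ to be finite, in which case ``countably many subfunctors'' is immediate and Lemma~\ref{prodClosedCountable} is the tool for the genuinely infinite-but-tame case that arises if one instead only assumes, say, finitely many from $\mathcal{S}_0$ while allowing unboundedly large $2$-parts. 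So the honest structure is: I would use Lemma~\ref{prodClosedCountable} to handle the case where the $2$-part valuations are unbounded (giving an $\bN$-coordinate) but everything else is frozen to finitely many values, realizing the residual-subquotient-closed subsets as a subset of the subsets of $\bN^n$ closed under the product ordering from below, hence countable.

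\textbf{Main obstacle.} The hard part will be Step 2: verifying rigorously that the hypothesis ``$\D$ contains only finitely many objects from each $\mathcal{S}_i$'' genuinely forces the invariants of residual objects in $\D$ to be frozen onto a finite set of primes with bounded (or at worst a single free $\bN$-coordinate's worth of) data. This requires a careful case analysis on the arithmetic of $N = N_2 \times N_{2'}$: for each way an ``unbounded family'' could occur in $\mathcal{R} \cap \mathrm{Ob}(\D)$, one must exhibit a subquotient landing in one of the $\mathcal{S}_i$, using Lemma~\ref{quotientSubConnection} to produce the relevant pseudodihedral subquotients and Theorem~\ref{residualGroups} to confirm they are residual. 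Once Step 2 is in hand, Steps 1 and 3 are bookkeeping plus a direct invocation of Lemma~\ref{prodClosedCountable} and Remark~\ref{subParamGen}.
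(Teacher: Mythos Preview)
Your overall strategy matches the paper's: reduce to showing that $\{\overline{J} : J \subset \mathcal{R}_{\D,B^\times}\}$ is countable, pin down a finite set of odd primes $p_1,\dots,p_r$ supporting all residual objects in $\D$, encode residuals as tuples in $\bN^{r+1}$, and invoke Lemma~\ref{prodClosedCountable}. The paper's Step~2 is the clean one-line observation you are circling: if $G\in\C'$ is residual and $p$ is an odd prime dividing $|G|$, then $G$ has a subgroup $X$ in one of the $\mathcal{S}_i$ with $p\mid |X|$; since $\D$ is replete this $X$ lies in $\D$, so finitely many objects in each $\mathcal{S}_i$ (and the observation that each relevant $r$ in $\mathcal{S}_r$ forces $D_{2r}\in\D\cap\mathcal{S}_0$, so only finitely many $r$ occur) yields a finite prime set $\{p_1,\dots,p_r\}$. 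Your case analysis is groping toward this but never lands it.

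There is a genuine error in your Step~3. You write that each odd prime in $P$ contributes an exponent ``$0$ or $1$ since $N_{2'}$ is squarefree,'' and earlier that ``$|N_2|\le B$.'' Both are false. The residual condition is that $N_{2'}$ has no element of order $q^2$, i.e.\ $N_{2'}$ has squarefree \emph{exponent}, not squarefree order: thus $N_{2'}\cong (C_{p_1})^{n_1}\times\cdots\times(C_{p_r})^{n_r}$ with the $n_i$ unbounded in $\bN$. Likewise nothing in the hypothesis bounds $|N_2|=2^{n_0}$ (take $\D$ the $2$-groups: every $D_{2^n}$ with $n\ge4$ is residual). So $\mathcal{R}\cap\mathrm{Ob}(\D)$ need not be finite, and you genuinely need Lemma~\ref{prodClosedCountable} in its full $(r+1)$-dimensional form, with one unbounded $\bN$-coordinate for $n_0$ and one for each $n_i$. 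The paper handles this by embedding $\D$ into the replete subcategory $\D'$ generated by all $G_{(n_0,\dots,n_r)}:=C_2\ltimes(C_{2^{n_0}}\times (C_{p_1})^{n_1}\times\cdots\times(C_{p_r})^{n_r})$ and observing that residual-subquotient-closed subsets inject into subsets of $\bN^{r+1}$ closed from below under the product order.
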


\begin{proof}
The "if" direction follows from Corollary~\ref{uncountableSubs}. What is left is to prove that if $\D$ contains only finitely many objects from each of the sets described in Corollary~\ref{uncountableSubs}, then the lattice of subfunctors of $B^\times$ over $\D$ is at most countable. 
Suppose $\D$ contains only finitely many objects from each set described in Corollary~\ref{uncountableSubs}. 
Let $\mathcal{U}$ denote the objects contained in $\D$ from the union of all these sets. 
Since $\D$ contains only finitely many objects from $S_0$, it follows that $\mathcal{U}$ must be finite.
Notice that if $G\in\C'$ is residual with respect to $B^\times$ and $p$ is an odd prime dividing the order of $|G|$, then $G$ has a
subquotient (in fact, a subgroup) $X$ isomorphic to a group from one of the sets in Corollary~\ref{uncountableSubs}, such that $p||X|$.
Thus, by the finiteness of the set $\mathcal{U}$, there are odd primes $p_1,\dots,p_r$ such that, for any $X\in \D$, residual with respect to $B^\times$,
we have natural numbers $n_0,n_1\dots, n_r$ and a pseudodihedral group 
\[G_{(n_0,n_1,\dots, n_r)}:=C_2\ltimes(C_{2^{n_0}}\times (C_{p_1})^{n_1}\times\cdots\times (C_{p_r})^{n_r}),\]
with $X\prec G_{(n_0,n_1,\dots, n_r)}$.

By Remark~\ref{subParamGen}, we may assume that every object in $\D$ is a subquotient of some object which is residual with respect to $B^\times$,
since this does not change the poset structure. Suppose $\D'$ is a full subcategory of $\C$ consisting of subquotients of the groups $G_{(n_0,n_1\dots, n_r)}$,
for any $(r+1)$-tuple of natural numbers $(n_0,n_1\dots, n_r)$. Then $\D\subset \D'$ and the restriction functor $\F_{\D'}\to \F_{\D}$ induces a surjective
morphism between the poset of subfunctors of $B^\times$ over $\D'$ and the poset of subfunctors of $B^\times$ over $\D$. Thus, it suffices to show that 
$B^\times$ over $\D'$ has only countably many subfunctors.

Let $\mathcal{R}$ be the complete set of residuals with respect to $B^\times$ in $\C'$ form Theorem~\ref{residualGroups}. 
To see that  $B^\times$ over $\D'$ has only countably many subfunctors, we first set $\mathcal{R}_{\D',B^\times}$ be the complete set of residual with respect to $B^\times$ in $\D'$, consisting of
elements of $\mathcal{R}$ that are in $\D'$. Using Theorem~\ref{subParam}, the result follows if the set $\{\overline{J}|J\subset\mathcal{R}_{\D',B^\times}\}$ is countable.
To see this, let $J\subset\mathcal{R}_{\D',B^\times}$ and consider the set $\{(a_0,a_1,\dots,a_r)\in \bN|G_{(a_0,a_1,\dots, a_r)}\prec G_{(n_0,n_1,\dots, n_r)}\in \overline{J}\}$.
This set is closed under the product ordering on $\bN^{r+1}$. Moreover, this induces an injection into the set of subsets of $\bN^{r+1}$ that are closed under the product ordering.
The result follows by Lemma~\ref{prodClosedCountable}.
\end{proof}

It is natural to ask if we can relax the condition in Theorem~\ref{converseUncountableSubs}, for $\D$ to be a subcategory of $\C'$.

\begin{ques}
Let $\D$ be any replete subcategory of $\C$. 
Is the condition in Corollary~\ref{uncountableSubs} a necessary condition for the biset functor $B^\times$ over $\D$ to have uncountably many subfunctors?
\end{ques}

The next result generalizes Corollary~\ref{oddIndexCor}.

\begin{prop}\label{computingSubfuncts}
Suppose $I\subset \mathcal{R}_{\C',B^\times}$ is closed under residual subquotients. Let $G$ be an
object of $\C'$, and $H$ a normal subgroup in $G$ with odd index, such that $H$ has an abelian subgroup of index at most $2$. Let $\mathcal{N}$
be the set of all normal subgroups $N\unlhd H$, such that $H/N$ is trivial, cyclic of order $2$, or pseudodihedral, such that every suquotient of $H/N$ 
that is residual with respect to $B^\times$
is isomorphic to an element in $I$. Set $\mathcal{L}=\{\Inf_{H/N}^H\Phi_{H/N}\}_{N\in \mathcal{N}}$, then $\mathcal{L}$ is a basis for
$F_I(H)$ and $G/H$ acts on $\mathcal{L}$. If $L_1,\dots,L_k$ denote the orbit sums of this action, then $\{Ten^G_HL_1\}_{i=1}^k$
is a basis for $F_I(G)$.
\end{prop}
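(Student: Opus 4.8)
The plan is to follow the template of the proof of Corollary~\ref{oddIndexCor}: first describe $F_I(H)$ via the faithful-element decomposition of Proposition~\ref{normChar}, then check that the displayed generating set of $F_I(H)$ is permuted by $G/H$, and finally transport everything to $G$ through the odd-index isomorphism of Proposition~\ref{oddIndexIso}. The key inputs will be Theorem~\ref{subParam} (to recognize which residual groups $X$ satisfy $F_I(X)=B^\times(X)$), Theorem~\ref{residualGroups}, and Lemma~\ref{PhiRestriction}.

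\emph{That $\mathcal{L}$ is a basis of $F_I(H)$.} Since $F_I$ is a subfunctor of $B^\times$, every morphism occurring in Proposition~\ref{normChar}, namely $\Def^H_{H/N}$, the idempotent $f_1^{H/N}$, and $\Inf^H_{H/N}$, preserves $F_I$, so $\delta$ restricts to an isomorphism of abelian groups $F_I(H)\to\bigoplus_{N\unlhd H}\partial F_I(H/N)$ whose inverse is the restriction of $\iota$. Now $\partial F_I(H/N)=f_1^{H/N}F_I(H/N)\subseteq\partial B^\times(H/N)$, and by Corollary~\ref{trivialFaithAb}, Lemma~\ref{PhiLemma}, and Proposition~\ref{abelianSubgroup} the group $\partial B^\times(H/N)$ is trivial unless $H/N$ is trivial, cyclic of order $2$, or pseudodihedral, in which case it is the line $\bF_2\Phi_{H/N}$. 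So it suffices to prove $\partial F_I(H/N)\neq 0$ precisely when $N\in\mathcal{N}$. Because $\Phi_{H/N}$ is faithful, $\partial F_I(H/N)\neq 0$ iff $\partial B^\times(H/N)\neq 0$ and $\Phi_{H/N}\in F_I(H/N)$. Suppose $\partial B^\times(H/N)\neq 0$, so $H/N$ is trivial, $C_2$, or pseudodihedral. If $\Phi_{H/N}\in F_I(H/N)$, then by Lemma~\ref{PhiRestriction} (and the analogous elementary fact for $H/N$ abelian, using Lemma~\ref{abelianGeneratedBy-1}) one has $B^\times(S)=B(S,H/N)\Phi_{H/N}\subseteq F_I(S)$ for every residual subquotient $S$ of $H/N$, so $F_I(S)=B^\times(S)$ and hence $S$ is isomorphic to a member of $I$, because $I_{F_I}=I$ by Theorem~\ref{subParam} (valid as $I$ is closed under residual subquotients); conversely, if every residual subquotient $S$ of $H/N$ lies in $I$, then $\Phi_S\in F_I(S)$ for each such $S$, and the expansion $B^\times(H/N)=\sum_{S}B(H/N,S)\Phi_S$ of Theorem~\ref{residualGroups} gives $B^\times(H/N)\subseteq F_I(H/N)$, in particular $\Phi_{H/N}\in F_I(H/N)$. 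Thus, given $\partial B^\times(H/N)\neq 0$, $\Phi_{H/N}\in F_I(H/N)$ iff every residual subquotient of $H/N$ is isomorphic to a member of $I$, i.e. iff $N\in\mathcal{N}$. Combining, $\partial F_I(H/N)\neq 0\iff N\in\mathcal{N}$, whence $F_I(H)=\bigoplus_{N\in\mathcal{N}}\bF_2\,\Inf^H_{H/N}\Phi_{H/N}$; that is, $\mathcal{L}$ is an $\bF_2$-basis of $F_I(H)$.

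\emph{That $G/H$ permutes $\mathcal{L}$, and the transport to $G$.} Exactly as in Corollary~\ref{oddIndexCor}, for $g\in G$ the action of $gH$ on $B^\times(H)$ is by $\Iso(c_g)$ with $c_g$ conjugation by $g$, and $\Iso(c_g)\bigl(\Inf^H_{H/N}\Phi_{H/N}\bigr)=\Inf^H_{H/{}^{g}N}\Phi_{H/{}^{g}N}$; since $c_g$ is an automorphism of $H$, the quotient $H/{}^{g}N\cong H/N$ is again trivial, $C_2$, or pseudodihedral and has the same residual subquotients up to isomorphism, so $^{g}N\in\mathcal{N}$ and $G/H$ acts on $\mathcal{L}$ by permutations. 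By Proposition~\ref{oddIndexIso}, $\Res^G_H$ and $\Ten^G_H$ are mutually inverse isomorphisms between $B^\times(G)$ and $B^\times(H)^{G/H}$; as both are morphisms in the biset category and $F_I$ is a subfunctor, they carry $F_I(G)$ into $F_I(H)^{G/H}$ and $F_I(H)^{G/H}$ into $F_I(G)$ respectively, and the composites are identities, so they restrict to an isomorphism $F_I(G)\cong F_I(H)^{G/H}$ (this is the argument used at the end of the proof of Theorem~\ref{subParam}). Since $\mathcal{L}$ is an $\bF_2$-basis of $F_I(H)$ permuted by $G/H$, the invariant subspace $F_I(H)^{G/H}$ has the orbit sums $L_1,\dots,L_k$ as an $\bF_2$-basis; applying $\Ten^G_H$ yields the asserted basis $\{\Ten^G_H L_i\}_{i=1}^{k}$ of $F_I(G)$. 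The main obstacle is the chain of equivalences in the first part identifying when $\partial F_I(H/N)\neq 0$: that is where Theorem~\ref{subParam}, Theorem~\ref{residualGroups}, and Lemma~\ref{PhiRestriction} must be woven together, and where one must verify carefully that $\delta$ and $f_1^{H/N}$ genuinely restrict to the subfunctor $F_I$; the remaining steps are a straightforward reprise of Corollary~\ref{oddIndexCor} and the closing argument of Theorem~\ref{subParam}.
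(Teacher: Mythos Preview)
Your proof is correct and follows essentially the same approach as the paper: decompose $F_I(H)$ via Proposition~\ref{normChar}, identify when $\partial F_I(H/N)$ is nontrivial by combining Lemma~\ref{PhiRestriction} with Theorem~\ref{residualGroups}, and then reprise Corollary~\ref{oddIndexCor} and Proposition~\ref{oddIndexIso} for the passage to $G$. The only cosmetic difference is that you invoke Theorem~\ref{subParam} explicitly (via $I_{F_I}=I$) to conclude that $F_I(S)=B^\times(S)$ forces $S\in I$, whereas the paper phrases this step as a direct contradiction; since Theorem~\ref{subParam} precedes this proposition, your citation is legitimate and there is no circularity.
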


\begin{proof}
The only thing we need to prove is that $\mathcal{L}$ is a basis of $F_I(H)$, the rest follows by replacing $B^\times$ by $F_{I}$
in the proof of Corollary~\ref{oddIndexCor}. We can use Proposition~\ref{normChar} to decompose 
\[F_I(H)\cong \bigoplus_{N\unlhd H}\partial F_I(H/N).\]
We have $F_I(H/N)\subset B^\times(H/N)$, hence 
$\partial F_I(H/N)\subset \partial B^\times(H/N)$. Thus $\partial F_I(H/N)$ is trivial or $\partial F_I(H/N)$ is generated by $\Phi_{H/N}$. 
If $X$ is a pseudodihedral groups such that every 
subquotient residual with respect to $B^\times$ is isomorphic to an element of $I$, then $B^\times(X)=F_I(X)$. 
Thus $\langle \Inf_{H/N}^H\Phi_{H/N}\rangle_{N\in \mathcal{N}}\subset F_I(H)$. 

Now, suppose $M\unlhd H$ and $H/M$ has a subquotient
$S$, such that $S$ is isomorphic to an element $X\in \mathcal{R}_{\C',B^\times}-I$. For the sake of contradiction, suppose $\Phi_{H/M}\in\partial F_I(H/M)$.
Then by Proposition~\ref{PhiRestriction}, 
\[B^\times(X)\cong B^\times(S)=B(S,H/M)\Phi_{H/M}\subseteq F_I(S)\cong F_I(X),\]
and by dimension, we have $B^\times(X)=F_I(X)$. Hence $X\in I$, which is a contradiction.
Therefore, $\langle \Inf_{H/N}^H\Phi_{H/N}\rangle_{N\in \mathcal{N}}$ must be all of  $F_I(H)$
\end{proof}

In general, computing the dimension of the evaluations of simple biset functors is difficult. In the next section the explicit form of 
Proposition~\ref{computingSubfuncts} will give us an easy way to compute the dimension of $S_{G,\bF_2}(H)$
when $G,H\in \C'$ and $G$ is residual with respect to $B^\times$. 

We end this section by detailing which simple biset functors show up
as composition factors of $B^\times\in \F_{\C'}$. Before we state the following theorem, we clarify a technicality. Let $\D$ be any replete subcategory of $\C$.
Note that $B^\times(G)$ is always an elementary abelian $2$-group, thus we may view $B^\times$ as a functor in $B^\times\in \F_{\D,\bF_2}$. So there is
no ambiguity when we discuss the multiplicity of composition factors of $B^\times$.

\begin{theorem}\label{compFactorsBX}
The composition factors of $B^\times\in \F_{\C'}$ are parametrized exactly by the simple biset functors $S_{G,\bF_2}$, where 
$G\in \mathcal{R}_{\C',B^\times}$. Furthermore, each composition factor has multiplicity $1$.
\end{theorem}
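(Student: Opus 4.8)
The plan is to read off the composition factors of $B^\times$ directly from the lattice of subfunctors established in Theorem~\ref{subParam}, exploiting that this lattice is distributive and then localising the multiplicity count at each residual group via its subquotient category. Write $\mathcal{R}:=\mathcal{R}_{\C',B^\times}$ for the complete set of residuals of Theorem~\ref{residualGroups}. By Theorem~\ref{subParam} (and Remark~\ref{subParamGen} for replete subcategories of $\C'$) the map $J\mapsto F_J$ identifies the poset of subfunctors of $B^\times$ with the set $\mathcal{A}$ of residual-subquotient-closed subsets of $\mathcal{R}$, ordered by inclusion; since $F_J+F_{J'}=F_{J\cup J'}$ and, using the identification $I_{F_J}=J$, also $F_J\cap F_{J'}=F_{J\cap J'}$, the lattice $\mathcal{A}$ is distributive with $\vee=\cup$ and $\wedge=\cap$. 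For $G\in\mathcal{R}$ set $I_G:=\overline{\{G\}}=\{X\in\mathcal{R}\mid X\prec G\}$ and $I_G^{\circ}:=I_G\setminus\{G\}$; because $X\prec G$ with $X\not\cong G$ forces $|X|<|G|$, the set $I_G^{\circ}$ is again residual-subquotient-closed and $I_G$ covers $I_G^{\circ}$ in $\mathcal{A}$, so $S_G:=F_{I_G}/F_{I_G^{\circ}}$ is a simple subquotient of $B^\times$.

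Next I would identify $S_G\cong S_{G,\bF_2}$. Evaluating at $G$, Theorem~\ref{residualGroups} gives $F_{I_G}(G)=B^\times(G)$, while $F_{I_G^{\circ}}(G)=\sum_{X\in I_G^{\circ}}B(G,X)\Phi_X=(B^\times)^<(G)$ (each such $X$ has smaller order, hence contributes to $(B^\times)^<(G)$, and conversely $(B^\times)^<(G)$ is spanned, again by Theorem~\ref{residualGroups}, by images of the $\Phi_Y$ with $Y\in\mathcal{R}$, $Y\prec G$, $Y\ncong G$). Thus $S_G(G)=B^\times(G)/(B^\times)^<(G)$, which is $1$-dimensional over $\bF_2$: trivially for $G=\{1\}$, and by Remark~\ref{codimension1} together with Proposition~\ref{residImpliesQuasiD}(b) when $G$ is pseudodihedral; in particular it is the trivial $\bF_2\mathrm{Out}(G)$-module. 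I would then show $S_G(H)=0$ whenever $G\not\prec H$: for a transitive $\alpha\in B(H,G)$, Remark~\ref{elementaryBisetPresentation} factors $\alpha$ through a common subquotient $A/B\cong C/D$ of $H$ and $G$; since $G\not\prec H$ we get $A/B\ncong G$, so $C/D$ is a proper subquotient of $G$, and by Lemma~\ref{PhiRestriction} and Theorem~\ref{residualGroups}, $\Def^{C}_{C/D}\Res^{G}_{C}(\Phi_G)\in B^\times(C/D)=\sum_{Y\in\mathcal{R},\,Y\prec C/D}B(C/D,Y)\Phi_Y$ with every such $Y$ lying in $I_G^{\circ}$; hence $\alpha(\Phi_G)\in F_{I_G^{\circ}}(H)$, so $F_{I_G}(H)=F_{I_G^{\circ}}(H)$ and $S_G(H)=0$. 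Therefore $G$ is the minimal group of $S_G$ with trivial evaluation there, i.e.\ $S_G\cong S_{G,\bF_2}$; in particular each $S_{G,\bF_2}$ with $G\in\mathcal{R}$ occurs among the composition factors of $B^\times\in\F_{\C'}$.

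Finally I would show these are all the composition factors, each with multiplicity $1$, by descending to $\C_{\sqsubseteq_G}$. By Remark~\ref{subParamGen} the finite set $\{H\in\mathcal{R}\mid H\prec G\}$ is a complete set of residuals for $B^\times$ in $\C_{\sqsubseteq_G}$, so the subfunctor lattice of $B^\times\in\F_{\C_{\sqsubseteq_G}}$ is the finite distributive lattice of order ideals of $(\{H\in\mathcal{R}\mid H\prec G\},\prec)$; any maximal chain in it yields, via the second isomorphism theorem, a composition series of $B^\times$ over $\C_{\sqsubseteq_G}$ whose factors are $F_{I_H}/F_{I_H^{\circ}}\cong S_{H,\bF_2}$, each $H\prec G$ occurring exactly once. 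On the other hand, restricting a composition series of $B^\times$ over $G$ in $\F_{\C'}$ to $\C_{\sqsubseteq_G}$ and discarding repetitions gives a composition series of $B^\times$ over $\C_{\sqsubseteq_G}$ whose factors are exactly the restrictions of the ``over $G$'' factors (using Proposition~\ref{resSimpOrZero}, the defining conditions of a composition series over $G$, and that the restriction of $S_{K,V}$ to $\C_{\sqsubseteq_G}$ is the simple functor with the same parameters when $K\prec G$). Comparing the two finite lists shows that in the composition series over $G$ the factor $S_{G,\bF_2}$ occurs exactly once and no $S_{G,V}$ with $V\ncong\bF_2$ occurs, i.e.\ $S_{G,\bF_2}$ has multiplicity $1$ as a composition factor of $B^\times\in\F_{\C'}$. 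To exclude a composition factor $S_{K,V}$ with $K$ not residual with respect to $B^\times$, note that then $S_{K,V}(K)=F'(K)/F''(K)$ for subfunctors $F''\subsetneq F'$ of $B^\times$; writing $F'=F_J$ one has $K\notin J$, and re-expressing each $B(K,X)\Phi_X$ with $X\in J$, $X\not\prec K$ via Remark~\ref{elementaryBisetPresentation} and Lemma~\ref{PhiRestriction} gives $F'(K)=\sum_{X\in J,\,|X|<|K|}B(K,X)\Phi_X=(F')^<(K)$; since $(S_{K,V})^<(K)=0$ this forces $(F')^<(K)\subseteq F''(K)$, hence $F'(K)=F''(K)$, a contradiction.

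The step I expect to be the main obstacle is the multiplicity bookkeeping in the last paragraph: making rigorous that the ``composition factors over $G$'' invariant for $B^\times\in\F_{\C'}$ is faithfully computed after restriction to $\C_{\sqsubseteq_G}$ — in particular that restriction carries a composition series over $G$ to a genuine composition series over $\C_{\sqsubseteq_G}$ and preserves the parameters of simple functors — together with checking that the two finite lists of factors being compared match term by term. The remaining ingredients are the already-proved structure of the subfunctor lattice and the standard interplay between the elementary-biset presentation of Remark~\ref{elementaryBisetPresentation} and Lemma~\ref{PhiRestriction}.
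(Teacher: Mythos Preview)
Your proposal is correct and rests on the same backbone as the paper's proof---namely the subfunctor lattice of Theorem~\ref{subParam}---but you organise the argument somewhat differently. The paper begins with an \emph{arbitrary} simple subquotient $F_I/F_{I'}$, shows $|I\setminus I'|=1$ by inserting an intermediate closed set, and identifies the minimal group and module via Proposition~\ref{computingSubfuncts}; for multiplicity it simply writes down one composition series over $G$ inside $\F_{\C'}$ and observes that $S_{G,\bF_2}$ appears exactly once. You instead construct the simples $F_{I_G}/F_{I_G^{\circ}}$ first, identify them by a direct vanishing argument at objects $H$ with $G\not\prec H$ (using Remark~\ref{elementaryBisetPresentation}, Lemma~\ref{PhiRestriction}, and Theorem~\ref{residualGroups}), and then obtain both exhaustiveness and multiplicity~$1$ by restricting to $\C_{\sqsubseteq_G}$ and invoking Jordan--H\"older for the finite-length functor $B^\times$ there. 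Both routes work; your restriction-and-Jordan--H\"older step is a bit more machinery than the paper uses, and your final paragraph excluding non-residual $K$ is actually already implied by the preceding one applied with $G=K$, but your approach makes the distributive-lattice picture and the second-isomorphism identification $F_{J'}/F_J\cong F_{I_H}/F_{I_H^{\circ}}$ very transparent.
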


\begin{proof}
Let $F_{I'}\subseteq F_{I}\subseteq B^\times$ be two subfunctors of $B^\times\in \F_{\C'}$ such that $F_{I}/F_{I'}$ is simple, 
where $I$ and $I'$ are the corresponding subsets of $\mathcal{R}_{\C',B^\times}$ from Theorem~\ref{subParam}. We may then assume that
$F_{I}/F_{I'}\cong S_{G,V}$ where $G\in \C'$ and $V$ is a simple $\bF_2\text{Out}(G)$-module, such that $S_{G,V}(G)\cong V$.
 By 
Theorem~\ref{subParam}, $I'\subsetneq I$. Moreover, we claim that $I- I'$ is a one element set consisting of a group that
is isomorphic to $G$. Indeed, suppose $H_1,H_2\in I- I'$ such that $H_1\neq H_2$. We may assume that $H_1$ is not a subquotient of $H_2$.
Let $I''=\overline{I'\cup\{H_2\}}$. Thus $I'\subsetneq I''\subsetneq I$, which implies that $F_{I}/F_{I'}$ is not simple, by 
Theorem~\ref{subParam}. Furthermore, if $H$ is
the unique element in $I- I'$, then for any $X\in \C'$ such that $|X|<|H|$, we have $F_{I'}(X)=F_{I}(X)$, by Proposition~\ref{computingSubfuncts}. 
Thus $S_{G,V}(X)$ is trivial. Again by by Proposition~\ref{computingSubfuncts}, we have $F_{I'}(H)\subsetneq F_{I}(H)$.
So $H$ is a minimal object for $S_{G,V}$ and by Remark~\ref{simpleParam}, $H\cong G$. We can now assume $H=G$. Through evaluation
\[V\cong S_{G,V}(G)\cong F_{I}/F_{I'}(G)\cong B^\times(G)/(B^\times)^<(G),\]
has dimension $1$ as an $\bF_2$-space by Proposition~\ref{residImpliesQuasiD} and Proposition~\ref{PhiLemma}, since $G$ is residual.
Furthermore, $B^\times(G)/(B^\times)^<(G)$ is generated by the image of the element $\Phi_G\in B^\times(G)$. By the proof of Proposition $4.3.2$
in \cite{B1}, the action of $\text{Out}(G)$ on $V$ as an $\bF_2\text{Out}(G)$-module, is given by $f\cdot v=\Iso(f)(v)$ for any $f\in \text{Out}$ and $v\in V$.
Since $\Phi_G$ is the unique faithful element in $B^\times(G)$, it is fixed by $\Iso(f)$ for any $f\in \text{Out}(G)$. Thus $V$ is isomorphic to the
trivial $\bF_2\text{Out}(G)$-module.

That $S_{G,\bF_2}$ has multiplicity $1$ is easy to see by a simple construction.
Let $I=\overline{\{G\}}\subset \mathcal{R}_{\C',B^\times}$. Then choose 
$\{\{1\}\}=I_1\subsetneq I_2\subsetneq\cdots\subsetneq I_{n}\subsetneq I_{n+1}=I$ to be a filtration such that $|I_{i+1}- I_{i}|=1$ and $I_i$ is closed under residual subgroups, for each $i=1,\dots, n$. Note that this forces
$I_{n+1}- I_{n}=\{G\}$. Thus
\[F_{I_1}\subseteq F_{I_1}\subset F_{I_2}\subseteq F_{I_2}\subset\cdots \subseteq F_{I_n}\subset F_{I_{n+1}}\subseteq B^\times\]
is a composition series for $B^\times$ over $G$. It is easy to verify that $F_{I_{n+1}}/F_{I_n}$ is the only composition factor isomorphic to 
$S_{G,\bF_2}$, since $G\not\in I_i$ for any $i=1,\dots,n$. 
\end{proof}

\section{Some Applications}

\subsection{Computing $S_{G,\bF_2}(H)$ for $G$ residual with respect to $B^\times$}
\hfill\\

For this section, we again set $\mathcal{R}_{\C',B^\times}$ to be a complete set of residuals for $B^\times$ in $\C'$.

If $G$ and $H$ are finite groups and $k$
is a field, the following is a characterization due to Bouc of the $k$-module $S_{G,k}(H).$

\begin{prop}[\cite{B1}, proposition 4.4.6, page 71]\label{simpleModuleCharacterization}
Let $\D$ be a replete subcategory of $\C$. For any objects $G$ and $H$ of $\D$, let $kB_H(G)$ be the $k$-vector space with basis the set of conjugacy classes of sections $(T,S)$ of $G$ such that
$T/S\cong H$.

Then the module $S_{H,k}(G)$ is isomorphic to the quotient of $kB_H(G)$ by the radical of the $k$-valued symmetric
bilinear form on $kB_H(G)$ defined by
\[\langle(B,A)|(T,S)\rangle_H=|\{h\in B\backslash G/T|(B,A)-(\,^hT,\,^hS)\}|_k,\]
where $(B,A)-(T,S)$ means that 
\[B\cap S=A\cap T\quad\quad (B\cap T)A=B\quad\quad (B\cap T)S=T.\]
In particular, the $k$-dimension of $S_{H,k}(G)$ is equal to the rank of this bilinear form.
\end{prop}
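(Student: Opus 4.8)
The plan is to realize $S_{H,k}$ as the head of a standard induced functor built from the representable functor at $H$, to compute both that functor and its unique maximal subfunctor at an arbitrary object $G$, and to match the outcome with the stated bilinear form. Concretely, write $E_H=\End_{k\D}(H)=kB(H,H)$ and recall that $E_H/I_H\cong k\mathrm{Out}(H)$; inflate the trivial $k\mathrm{Out}(H)$-module $k$ to an $E_H$-module through $E_H\to E_H/I_H$ (so that $I_H$ acts as zero), and form the biset functor $L_H:=\Hom_{k\D}(H,-)\otimes_{E_H}k$. By the general parametrization of simple biset functors recalled in Remark~\ref{simpleParam} and the surrounding discussion (Chapter $4$ of \cite{B1}), $L_H$ has a unique maximal subfunctor $R_H$ and $S_{H,k}\cong L_H/R_H$, where $S_{H,k}$ is understood as the simple functor at $H$ attached to the trivial $k\mathrm{Out}(H)$-module. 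So it suffices to identify $L_H(G)$ with $kB_H(G)$ and to show that $R_H(G)$ is exactly the radical of the stated form.

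For the first identification, $kB(G,H)$ has $k$-basis the isomorphism classes of transitive $(G,H)$-bisets, which by Remark~\ref{elementaryBisetPresentation} and Goursat's lemma are classified by conjugacy classes of data consisting of a section $(T,S)$ of $G$, a section $(D,C)$ of $H$, and an isomorphism $D/C\cong T/S$. A transitive biset with $(D,C)\neq(H,\{1\})$ factors through a group of order smaller than $|H|$ and hence lies in $kB(G,H)I_H$; since $I_H$ acts as zero on $k$, tensoring kills precisely this submodule, and what remains is indexed by the $G$-conjugacy classes of sections $(T,S)$ of $G$ with $T/S\cong H$, the ambiguity in the choice of isomorphism being absorbed by the trivial $\mathrm{Out}(H)$-module. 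This gives a $k$-linear isomorphism $L_H(G)\cong kB_H(G)$.

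For the second identification, composition in $k\D$ yields a bilinear map $kB(G,H)\times kB(H,G)\to kB(H,H)=E_H$; composing with the projection $E_H\to E_H/I_H\cong k\mathrm{Out}(H)$ and the augmentation $k\mathrm{Out}(H)\to k$ produces a pairing which, after the identifications above and the obvious $kB(H,G)\cong kB(G,H)$, descends to a symmetric $k$-bilinear form on $kB_H(G)$. One then argues, as for Specht-type modules over an additive category, that (i) the assignment $G\mapsto$ radical of this form is a subfunctor of $L_H$, this being a Frobenius-type identity $\langle\varphi\circ u,w\rangle=\langle u,w\circ\varphi\rangle$ reflecting that the pairing factors through composition in the category; (ii) the form is nonzero, since a section pairs nontrivially with itself; and (iii) $L_H$ modulo the radical has no proper nonzero subfunctor, because such a subfunctor would be nonzero on the minimal group $H$, where nondegeneracy forces it to be all of the quotient. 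Hence the radical is precisely $R_H$, and $S_{H,k}(G)\cong kB_H(G)/(\text{radical})$.

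It remains to evaluate the form on the basis of sections, which is where the real work lies. Representing the transitive bisets attached to $(B,A)$ and $(T,S)$ by $\Ind\circ\Inf\circ\Iso$ composites and applying the Mackey formula of Remark~\ref{ebisetTranCom} together with the butterfly (Zassenhaus) lemma, the composite lands in $E_H$ as a sum over double cosets $h\in B\backslash G/T$; the term attached to $h$ contributes to the top quotient $E_H/I_H$ rather than to $I_H$ exactly when the butterfly subgroup equals $H$ on both sides, which unwinds to $B\cap{}^hS=A\cap{}^hT$, $(B\cap{}^hT)A=B$ and $(B\cap{}^hT)({}^hS)={}^hT$, i.e. to $(B,A)-({}^hT,{}^hS)$ in the notation of the statement, in which case its image under the augmentation is $1\in k$. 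Summing over $h$ gives exactly $|\{h\in B\backslash G/T : (B,A)-({}^hT,{}^hS)\}|_k$, and the last sentence of the proposition follows since the dimension of a quotient by the radical of a symmetric form is its rank. The main obstacle is precisely this computation, namely tracking which double-coset terms remain in $E_H/I_H$ rather than collapsing into $I_H$, together with establishing the Frobenius-type identity in step (i); one must also be careful with the characteristic, since for $k=\bF_2$ the order of $\mathrm{Out}(H)$ may be even, so the relevant functional on $k\mathrm{Out}(H)$ is the augmentation rather than projection onto an idempotent summand.
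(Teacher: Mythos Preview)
The paper does not supply a proof of this proposition: it is quoted verbatim from \cite{B1}, Proposition~4.4.6, and used as a black box. So there is no in-paper argument to compare against. Your outline is essentially the proof given in the cited reference: one builds $L_{H,k}=\Hom_{k\D}(H,-)\otimes_{E_H}k$, identifies its evaluation at $G$ with $kB_H(G)$ via the Goursat parametrization of transitive bisets (those with $(D,C)\neq(H,1)$ on the $H$-side lying in $I_H$), defines the pairing through composition followed by the projection $E_H\to E_H/I_H\cong k\mathrm{Out}(H)$ and the augmentation, and shows its radical is the unique maximal subfunctor using the adjunction $\langle\varphi u,w\rangle=\langle u,w\varphi\rangle$. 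The explicit formula then drops out of the Mackey decomposition together with the Zassenhaus butterfly, exactly as you describe.

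Two small cautions. First, your step (ii), ``a section pairs nontrivially with itself,'' is not quite the right way to see the form is nonzero in arbitrary characteristic: the self-pairing counts those $h\in B\backslash G/B$ with $(B,A)-({}^hB,{}^hA)$, and while $h=1$ always contributes, the total count can vanish in $k$. The cleaner argument is to evaluate at $G=H$, where $L_H(H)\cong k$ is already one-dimensional and the pairing is visibly nonzero there; this also anchors step (iii). Second, in the identification $L_H(G)\cong kB_H(G)$ you should check that the residual $\mathrm{Out}(H)$-action on the surviving basis elements really is trivial after tensoring with the trivial module, i.e.\ that two choices of isomorphism $T/S\cong H$ give the same class in the tensor product; this is immediate but worth a sentence.
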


It is generally difficult to determine the $k$-dimension of $S_{G,k}(H)$ in terms of other data about $G$ and $H$. 
However, we can apply the results from the the previous section to determine this for $S_{G,\bF_2}(H)$, where 
$G\in \mathcal{R}_{\C',B^\times}$ and $H\in \C'$.

\begin{theorem}\label{simpleComputation}
Suppose $G\in \mathcal{R}_{\C',B^\times}$ and $H\in \C'$. Set $r=\dim_{\bF_2}(F_I(H))$ and $l=\dim_{\bF_2}(F_J(H))$, where 
$I=\overline{\{G\}}$, $J=I-\{G\}$.
Then $\dim_{\bF_2}(S_{G,\bF_2}(H))=r-l$.
\end{theorem}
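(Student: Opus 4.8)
The plan is to exploit the filtration-by-residuals machinery from Theorem~\ref{subParam} and Theorem~\ref{compFactorsBX}. Since $I = \overline{\{G\}}$ is closed under residual subquotients and $J = I - \{G\}$ is obtained by removing the single maximal element $G$, the set $J$ is again closed under residual subquotients (every residual subquotient of an element of $J$ is a proper subquotient of $G$, hence lies in $J$); in particular $F_J \subseteq F_I \subseteq B^\times$ are genuine subfunctors over $\C'$, and we may restrict everything to $\C'_{\sqsubseteq_G}$ or work directly over $\C'$. The first step is to observe that $F_I/F_J \cong S_{G,\bF_2}$: by the argument in the proof of Theorem~\ref{compFactorsBX}, since $I - J = \{G\}$ is a single residual group, $F_J(X) = F_I(X)$ for all $X \in \C'$ with $|X| < |G|$, so any composition factor of $F_I/F_J$ has minimal group $G$; and $F_I(G)/F_J(G) \cong B^\times(G)/(B^\times)^<(G)$ has $\bF_2$-dimension $1$ with trivial $\mathrm{Out}(G)$-action (as $\Phi_G$ is the unique faithful element, fixed by every $\Iso(f)$), forcing $F_I/F_J \cong S_{G,\bF_2}$.

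Given this identification, the second step is simply to evaluate the short exact sequence $0 \to F_J \to F_I \to S_{G,\bF_2} \to 0$ of biset functors at the object $H$. Evaluation at $H$ is exact (it is just forgetting to the $R$-module $F(H)$, and a quotient of functors is computed pointwise), so we get a short exact sequence of $\bF_2$-vector spaces
\[
0 \longrightarrow F_J(H) \longrightarrow F_I(H) \longrightarrow S_{G,\bF_2}(H) \longrightarrow 0,
\]
whence $\dim_{\bF_2} S_{G,\bF_2}(H) = \dim_{\bF_2} F_I(H) - \dim_{\bF_2} F_J(H) = r - l$. That is the whole argument; the remaining content is bookkeeping to make sure the hypotheses of the cited results apply, namely that $G \in \mathcal{R}_{\C',B^\times}$ guarantees $I = \overline{\{G\}}$ contains $G$ as its unique maximal element and that $H \in \C'$ so that all the functors in question are defined and take values in $\,_{\bF_2}\mathrm{mod}$.

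The main obstacle — really the only nontrivial point — is verifying that $F_J$ is exactly the subfunctor "one step below" $F_I$ in the appropriate sense, i.e. that $F_I/F_J$ is simple rather than having length $>1$. This is where one needs Theorem~\ref{subParam}'s poset isomorphism together with the observation $|I - J| = 1$: if $F_I/F_J$ had a proper nonzero subfunctor, it would correspond via Theorem~\ref{subParam} to a residual-subquotient-closed set strictly between $J$ and $I$, impossible since they differ by one element. One should also double-check that $F_J$ is the correct object to quotient by — a priori $S_{G,\bF_2}(H)$ could require quotienting $F_I(H)$ by $(F_I)^<(H) \cap F_I(H)$ rather than $F_J(H)$ — but Proposition~\ref{computingSubfuncts} (via the proof of Theorem~\ref{compFactorsBX}) gives $F_J(H) = (F_I)^<(H)$ when restricted appropriately, closing the gap. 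I would write the proof to lean on Theorem~\ref{compFactorsBX}'s internal argument for the isomorphism $F_I/F_J \cong S_{G,\bF_2}$ and then just invoke pointwise exactness, keeping the whole thing to a few lines.
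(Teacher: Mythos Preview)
Your proposal is correct and follows essentially the same approach as the paper: verify that $J$ is closed under residual subquotients, use Theorem~\ref{subParam} (the poset isomorphism) together with $|I-J|=1$ to conclude $F_I/F_J$ is simple, identify it as $S_{G,\bF_2}$ via the minimal-group and $\mathrm{Out}(G)$-action argument from Theorem~\ref{compFactorsBX}, and then evaluate at $H$. The paper's proof is just a terser version of exactly this; your final paragraph worrying about $(F_I)^{<}(H)$ is unnecessary, since once $F_I/F_J\cong S_{G,\bF_2}$ is established as an isomorphism of biset functors, pointwise evaluation at $H$ immediately gives the short exact sequence you wrote.
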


\begin{proof}
Notice that $\overline{J}=J$. Since $J\subset I$, Theorem~\ref{subParam} $F_I/F_J$ is simple. Proposition~\ref{computingSubfuncts} implies that $G$ is a 
minimal object for $F_I/F_J$.
Thus by Remark~\ref{simpleParam} and Theorem~\ref{compFactorsBX}, $F_I/F_J\cong S_{G,\bF_2}$ and the result follows.
\end{proof}

Theorem~\ref{simpleComputation}, together with Proposition~\ref{computingSubfuncts}, gives us a way to compute
$S_{G,\bF_2}(H)$, where $G\in \mathcal{R}_{\C',B^\times}$ and $H\in \C'$. To illustrate this, we cover the cases where $G$ is trivial or dihedral and $H$ is dihedral.

\begin{example}\label{computation1}
Consider the dihedral group $D_{2k}$, where $k\geq 3$. Let $p_1,\dots,p_r$ denote the prime divisors of $k$ that are congruent to $3$ modulo $4$.
Set $m_i$ to be the power of the $p_i$-part of $k$, for $i=1,\dots,r$ and $m_0$ the power of the $2$-part of $k$. We show that 
\[\dim_{\bF_2}(S_{\{1\},\bF_2}(D_{2k}))= \begin{cases}      
     2+m_1+\cdots+ m_r & \text{ if } \quad m_0=0\\
     4+2(m_1+\cdots +m_r) & \text{ if } \quad m_0=1\\
     5+2(m_1+\cdots +m_r) & \text{ if } \quad m_0>1     
   \end{cases}
\]
By Proposition~\ref{computingSubfuncts}, $\dim_{\bF_2}(S_{\{1\},\bF_2}(D_{2k}))$
is equal to the number of normal subgroups $N\unlhd D_{2k}$, such that $D_{2k}/N$ is trivial, cyclic of order $2$, or dihedral (nonabelian), and
there are no nontrivial subquotients of $D_{2k}/N$ residual with respect to $B^\times$. If $D_{2k}/N$ is dihedral and there are
no nontrivial subquotients of $D_{2k}/N$ residual with respect to $B^\times$, Theorem~\ref{residualGroups} implies
that $D_{2k}/N\cong D_{8}$ or $D_{2k}/N\cong D_{2l}$, where $l=2^\epsilon p^s$, where $p$ is a prime congruent to $3$ modulo $4$, and $\epsilon\in \{0,1\}$.
The following cases are straightforward to verify:
\begin{itemize}
\item If $m_0=0$, then there is one normal subgroup of $D_{2k}$ with trivial quotient, one normal subgroup of $D_{2k}$ of index $2$,
and $m_1+\cdots+ m_r$ normal subgroups $N\unlhd D_{2k}$, such that $D_{2k}/N$ is dihedral and there are no nontrivial 
subquotients of $D_{2k}/N$ residual with respect to $B^\times$.\\

\item If $m_0=1$, then there is one normal subgroup of $D_{2k}$ with trivial quotient, three normal subgroups of $D_{2k}$ of index $2$,
and $2(m_1+\cdots+ m_r)$ normal subgroups $N\unlhd D_{2k}$, such that $D_{2k}/N$ is dihedral and there are no nontrivial subquotients of $D_{2k}/N$ residual with respect to $B^\times$.\\

\item If $m_0>1$, then there is one normal subgroup of $D_{2k}$ with trivial quotient, three normal subgroups of $D_{2k}$ of index $2$,
and $2(m_1+\cdots+ m_r)+1$ normal subgroups $N\unlhd D_{2k}$, such that $D_{2k}/N$ is dihedral and there are no nontrivial subquotients of $D_{2k}/N$ residual with respect to $B^\times$.
\end{itemize}

Set $D_{2}\cong C_2$ and $D_{4}\cong C_2\times C_2$. 
It is straightforward to verify $\dim_{\bF_2}(S_{\{1\},\bF_2}(D_{2}))=2$ and $\dim_{\bF_2}(S_{\{1\},\bF_2}(D_{4}))=4$. 
There is then a function $s:\bN\to\bN$ defined as $s(n)=\dim_{\bF_2}(S_{\{1\},\bF_2}(D_{2n}))$.
Let $\phi$ denote Euler's $\phi$-function, and set $m$ to be the number of positive divisors 
$d|n$, such that
$\phi(d)\equiv 2\, (\text{mod} \,\,4)$. Recall that for any $x\in \bN$, $\phi(x)\equiv 2\, (\text{mod} \,\,4)$ if and only if $x=4$, $x$ is the power of a prime congruent to $3$ modulo $4$,
or $x$ is twice the power of a prime congruent to $3$ modulo $4$. Thus $s(n)=m+2$ if $n$ is odd and $s(n)=m+4$ if $n$ is even.
\end{example}

\begin{example}\label{computation2}
Suppose $D_{2n}$ is a dihedral
group residual with respect to $B^\times$. If $n$ is not a power of $2$, by Theorem~\ref{residualGroups} we can decompose
\[n=2^{n_0}p_1\cdots p_r\]
where $n_0\neq1$, and the $p_i$ pairwise coprime odd primes, for $i=1,\dots,r$. Moreover, if $n_0=0$ and $r=1$,
then $p_1 \equiv 1\, (\text{mod}\,\, 4)$. 

Let $k\geq 3$ and consider the dihedral group $D_{2k}$. Let $m_0$ denote the exponent of the
$2$-part of $k$.
If $n$ is a power of $2$ (recall that in this case $n\geq8$), then we set $m=1$. Otherwise, set $m=m_1\cdots m_r$, where $m_{i}$ denotes the power of the $p_i$-part of $k$, for $i=1,\dots,r$. 
We show that
\[\dim_{\bF_2}(S_{D_{2n},\bF_2}(D_{2k}))= \begin{cases} 
     0 & \text{ if } \quad n\nmid k \\ 
     m & \text{ if } \quad n|k; \,\, n_0\neq0 \text{ or } m_0=0 \\
     2m & \text{ if } \quad n|k;\,\, n_0=0 \text{ and } m_0\neq0
   \end{cases}
\]
If $n\nmid k$, then $D_{2n}$ does not show up as a subquotient of $D_{2k}$, hence by Remark~\ref{simpleParam}, $S_{D_{2n},\bF_2}(D_{2k})=0$.

Suppose $n|k$. Set $I=\overline{\{D_{2n}\}}$ and $J=I-\{D_{2n}\}$. Theorem~\ref{simpleComputation} and Proposition~\ref{computingSubfuncts}
tell us that $\dim_{\bF_2}(S_{D_{2n},\bF_2}(D_{2k}))$ is equal to the number of normal subgroups $N\unlhd D_{2k}$, such that every subquotient, residual with respect to 
$B^\times$,
of $D_{2k}/N$ is in $I$ and at least one is not in $J$. In other words, every subquotient, residual with respect to 
$B^\times$, of $D_{2k}/N$ is in $I$ and $D_{2n}$ is a subquotient of $D_{2k}/N$.
When $n_0\neq 0$, this means that $D_{2k}/N$ must be isomorphic to a dihedral group $D_{2l_N}$ such that 
\[l_N=2^{n_0}p_1^{n_1}\dots p_r^{n_r},\]
with $1\leq n_i\leq m_i$, $i=1,\dots, r$.
When $n_0=0$, this means that $D_{2k}/N$ must be isomorphic to a dihedral group $D_{2l_N}$
\[l_N=2^{\epsilon}p_1^{n_1}\dots p_r^{n_r},\]
with $1\leq n_i\leq m_i$, $i=1,\dots, r$ and $\epsilon\in \{0,1\}$. 
By properties of dihedral groups, there is a bijective correspondence between normal subgroups of $N\unlhd D_{2k}$ of this type and divisors $d|k$, such that $\frac{k}{d}=l_N$.
The formula for $\dim_{\bF_2}(S_{D_{2n},\bF_2}(D_{2k}))$ follows by counting such divisors of $k$.

\end{example}

\subsection{Surjectivity of the the exponential map $B\to B^\times$}
\hfill\\

For any finite group $G$, we define the exponential map 
\[\varepsilon_G:B(G)\to B^\times(G)\]
to be the group homomorphism induced by sending 
\[[G/H]\mapsto \Ten_{H}^G(-1),\]
for any $H\leqslant G$.

\begin{rmk}
It is straightforward to show that the collection of all these group morphisms $\varepsilon_G$ is a morphism $\varepsilon:B\to B^\times$ of biset functors. 
Yoshida proves this in Lemma $3.2$ of \cite{Yo}. Yal\c{c}{\i}n also discusses this map in Section $7$ of \cite{Ya}. 
The way we define it in this paper is in the spirit of
Bouc $9.8$ of \cite{B2}.
\end{rmk}

The next result is proved by Bouc $9.7$ of \cite{B2} for $p$-groups. It is likely known to experts but we include a proof for the reader, 
since we make use of it to determine when the exponential map is surjective for objects in $\C'$.

\begin{prop}\label{expoImage}
Let $G$ be a finite group, then $\Im(\varepsilon_G)\cong S_{1,\bF_2}(G)$ as $\bF_2$-modules.
\end{prop}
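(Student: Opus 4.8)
The plan is to identify $\Im(\varepsilon_G)$ with the evaluation at $G$ of a simple subfunctor of $B^\times$ whose minimal group is trivial, and then invoke the parametrization of simple biset functors. Since $B^\times(H)$ is an elementary abelian $2$-group for every finite group $H$, the morphism $\varepsilon\colon B\to B^\times$ factors through $\bF_2B:=\bF_2\otimes_\bZ B$, producing a morphism $\bar\varepsilon\colon\bF_2B\to B^\times$ in $\F_{\C,\bF_2}$ with $\Im(\bar\varepsilon_H)=\Im(\varepsilon_H)$ for every $H$. Write $M:=\Im(\bar\varepsilon)$; then $M$ is a subfunctor of $B^\times$ and a quotient of $\bF_2B$, and it suffices to show $M\cong S_{1,\bF_2}$.

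First I would observe that $M$ is generated, as a biset functor, by $M(\{1\})$. Indeed $B=\Hom_\C(1,-)$ is the representable functor at the trivial group, so it is generated by $\mathrm{id}_{\{1\}}=[\{1\}/\{1\}]\in B(\{1\})$; applying $\bar\varepsilon$ and using $\varepsilon_{\{1\}}([\{1\}/\{1\}])=\Ten^{\{1\}}_{\{1\}}(-1)=-1=\Phi_{\{1\}}$, which generates $B^\times(\{1\})\cong\bF_2$, we conclude that $M$ is generated by $\Phi_{\{1\}}\in B^\times(\{1\})$; in particular $M\neq 0$. Next I would prove $M$ is simple. Let $u\in M(G)$ be nonzero, for some $G\in\C$. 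Then there is a subgroup $K\leqslant G$ with $|u^K|=-1$, and, exactly as in the proof of Theorem~\ref{subParam} (deflation of unit groups being the fixed-point map by Remark~\ref{tensorInductionComp}), $\Def^K_{K/K}\circ\Res^G_K(u)=\Phi_{\{1\}}$. Hence the subfunctor of $M$ generated by $u$ contains $\Phi_{\{1\}}\in B^\times(\{1\})$, so it contains $M$, so it equals $M$; thus $M$ admits no nonzero proper subfunctor and is simple.

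Finally, the minimal group of the simple functor $M$ is the trivial group (since $M(\{1\})\neq 0$), and $M(\{1\})\cong\bF_2$ is the unique simple $\bF_2\mathrm{Out}(\{1\})$-module, so Remark~\ref{simpleParam} gives $M\cong S_{1,\bF_2}$; evaluating at $G$ yields $\Im(\varepsilon_G)=M(G)\cong S_{1,\bF_2}(G)$ as $\bF_2$-modules. I do not expect a genuine obstacle here: the only two points requiring a little care are that $B$ is generated by its value at the trivial group---immediate from $B=\Hom_\C(1,-)$---and the detection identity $\Def^K_{K/K}\circ\Res^G_K(u)=\Phi_{\{1\}}$ for nonzero $u$, which is precisely the computation already carried out in the proof of Theorem~\ref{subParam}.
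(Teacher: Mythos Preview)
Your proof is correct, but it follows a genuinely different route from the paper's. The paper invokes Proposition~\ref{simpleModuleCharacterization} (Bouc's bilinear-form description of $S_{H,k}(G)$) with $H=\{1\}$ and $k=\bF_2$: it identifies $\bF_2B_1(G)$ with $\bF_2B(G)$, computes that the form is $\langle(K,K)\mid(S,S)\rangle_1=|K\backslash G/S|_{\bF_2}$, and then checks directly from the tensor-induction formula in Remark~\ref{tensorInductionComp} that an element $\sum_i[G/K_i]$ lies in the radical of this form if and only if it lies in the kernel of $\overline{\varepsilon}$, both conditions reducing to the parity of $\sum_i|S\backslash G/K_i|$ for all $S\leqslant G$.

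Your argument instead works at the functor level: you show $M=\Im(\bar\varepsilon)$ is simple by using the detection trick from Theorem~\ref{subParam} to recover $\Phi_{\{1\}}$ from any nonzero element, together with the fact that $B$ (hence $M$) is generated at the trivial group. This is cleaner and yields the stronger statement $\Im(\varepsilon)\cong S_{1,\bF_2}$ as biset functors, not just evaluation-wise; it also avoids importing the bilinear-form machinery. The paper's approach, on the other hand, gives an explicit pointwise isomorphism and stays closer to the combinatorics of double cosets, which matches the computational flavor of the surrounding applications.
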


\begin{proof}
If $k=\bF_2$ and $H=\{1\}$, then in reference to Proposition~\ref{simpleModuleCharacterization} we can identify
$\bF_2B_{1}(G)$ with $\bF_2B(G)$ by associating $(K,K)$ to $[G/K]\in\bF_2B(G)$. Since $B^\times(G)$ is an $\bF_2$ vector space, 
$\varepsilon_G:B(G)\to B^\times(G)$ is equal to the map $\overline{\varepsilon}\circ\pi$, where $\pi:B(G)\to \bF_2B(G)$ is the projection
map and $\overline{\varepsilon}:\bF_2B(G)\to B^\times(G)$ is again the group homomorphism induced by sending $[G/H]\to \Ten_H^G(-1)$.
It suffices to show that the kernel of $\overline{\varepsilon}$ is equal to the radical of the symmetric bilinear form in Proposition~\ref{simpleModuleCharacterization}, on $\bF_2B_{1}(G)$. For $\bF_2B_{1}(G)$, this form is defined by
\[\langle (K,K)|(S,S)\rangle_1=|\{h\in K\backslash G/S\}|_{\bF_2},\]
since the requirement that $(K/K)-(\,^hS,\,^hS)$, is true for any $K,S\leqslant G$ and $h\in G$. Thus, if $K_1,\dots,K_n$
are subgroups of $G$ and $X=(K_1,K_1)+\cdots +(K_n,K_n)\in \bF_2B_{1}(G)$, then $X$ is in the radical our bilinear form if
and only if 
\[\sum_{i=1}^n|\{h\in S\backslash G/K_i\}|\] 
is even for all subgroups $S\leqslant G$. Similarly by the formula in Remark~\ref{tensorInductionComp}
the element associated to $X$ in $\bF_2B(G)$ is in the kernel of of $\varepsilon_G$ if and only if 
\[\sum_{i=1}^n|\{h\in S\backslash G/K_i\}|\] 
is even for all subgroups $S\leqslant G$.
\end{proof}

We can now determine when the exponential map $\varepsilon_G:B(G)\to B^\times(G)$ is surjective for $G\in \C'$.

\begin{theorem}\label{surjectiveExpo}
Suppose $G\in \C'$. Let $N\unlhd G$ be a normal subgroup with odd index having an abelian subgroup of index at most $2$. 
Let $\mathcal{R}_{\C',B^\times}$ be a complete set of residuals for $B^\times$ in $\C'$. Then $\varepsilon_G:B(G)\to B^\times(G)$
is surjective if and only if no nontrivial quotient of $N$ is isomorphic to an element of $\mathcal{R}_{\C',B^\times}$.
 \end{theorem}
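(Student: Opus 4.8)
The plan is to reduce the statement to a comparison of two explicit bases produced by Proposition~\ref{computingSubfuncts}. First, by Proposition~\ref{expoImage} we have $\dim_{\bF_2}\Im(\varepsilon_G)=\dim_{\bF_2}S_{1,\bF_2}(G)$, and by Theorem~\ref{simpleComputation} applied to the residual group $\{1\}$ (so that $I=\overline{\{\{1\}\}}=\{\{1\}\}$ and $J=I-\{\{1\}\}=\emptyset$) the right-hand side equals $\dim_{\bF_2}F_{\{\{1\}\}}(G)$, where $F_{\{\{1\}\}}\subseteq B^\times$ is the subfunctor generated by $\Phi_{\{1\}}$. Since $F_{\{\{1\}\}}(G)$ is a subspace of the finite-dimensional $\bF_2$-space $B^\times(G)$, the map $\varepsilon_G$ is surjective if and only if $\dim_{\bF_2}\Im(\varepsilon_G)=\dim_{\bF_2}B^\times(G)$, that is, if and only if $F_{\{\{1\}\}}(G)=B^\times(G)$. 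So it suffices to compare dimensions.

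Next I would invoke Proposition~\ref{computingSubfuncts} twice with $H=N$. With $I=\{\{1\}\}$ it gives a basis of $F_{\{\{1\}\}}(G)$ indexed by the $G/N$-orbits on
\[\mathcal{N}=\{M\unlhd N\mid N/M\text{ is trivial, }C_2\text{, or pseudodihedral, and every subquotient of }N/M\text{ residual w.r.t.\ }B^\times\text{ is trivial}\},\]
while with $I=\mathcal{R}_{\C',B^\times}$ — for which $F_I=B^\times$ by Theorem~\ref{residualGroups} — it gives a basis of $B^\times(G)$ indexed by the $G/N$-orbits on
\[\mathcal{N}'=\{M\unlhd N\mid N/M\text{ is trivial, }C_2\text{, or pseudodihedral}\}.\]
Both $\mathcal{N}\subseteq\mathcal{N}'$ are unions of $G$-conjugacy classes, since their defining conditions depend only on the isomorphism type of $N/M$, which is preserved by conjugation (cf.\ the equivariance computation in the proof of Corollary~\ref{oddIndexCor}, which also identifies the relevant $G/N$-action on basis vectors with conjugation of the subgroups $M$). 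Hence a \emph{proper} inclusion $\mathcal{N}\subsetneq\mathcal{N}'$ forces strictly more orbits and so $\dim_{\bF_2}F_{\{\{1\}\}}(G)<\dim_{\bF_2}B^\times(G)$, whereas $\mathcal{N}=\mathcal{N}'$ makes the two bases coincide. Therefore $\varepsilon_G$ is surjective if and only if $\mathcal{N}=\mathcal{N}'$.

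It remains to identify $\mathcal{N}=\mathcal{N}'$ with the condition in the statement. If some nontrivial quotient $N/K$ is isomorphic to an element of $\mathcal{R}_{\C',B^\times}$, then $N/K$ is pseudodihedral (nontrivial residual groups are pseudodihedral by Proposition~\ref{residImpliesQuasiD}(a)), so $K\in\mathcal{N}'$; but $N/K$ is a nontrivial residual subquotient of itself, so $K\notin\mathcal{N}$, giving $\mathcal{N}\subsetneq\mathcal{N}'$. For the converse, take $M\in\mathcal{N}'\setminus\mathcal{N}$. A group of order at most $2$ has no nontrivial subquotient, so $N/M$ must be pseudodihedral and must admit a nontrivial residual subquotient $X$, necessarily pseudodihedral. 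The key observation is that a pseudodihedral subquotient of a pseudodihedral group $P$ is always isomorphic to a quotient of $P$: writing $X\cong A/B$ with $B\unlhd A\leqslant P$, the group $A$ is not abelian (else $X$ would be), hence is pseudodihedral, so $A/B$ is isomorphic to a subgroup of $A$ by Lemma~\ref{quotientSubConnection}, and this pseudodihedral subgroup of $P$ is in turn isomorphic to a quotient of $P$ by the last assertion of Lemma~\ref{quotientSubConnection}. Applying this with $P=N/M$ shows $X$ is isomorphic to a quotient of $N/M$, hence of $N$, producing a nontrivial quotient of $N$ lying in $\mathcal{R}_{\C',B^\times}$. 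This completes the equivalence.

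I expect the last paragraph to be the main obstacle: extracting cleanly from Lemma~\ref{quotientSubConnection} that a pseudodihedral subquotient of a pseudodihedral group is already realized as a quotient, and keeping the orbit-counting bookkeeping of the middle step honest — in particular verifying that $\mathcal{N}$ is genuinely a union of $G$-conjugacy classes and that a proper inclusion $\mathcal{N}\subsetneq\mathcal{N}'$ strictly increases the number of orbits.
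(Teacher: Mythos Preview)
Your proof is correct and follows essentially the same route as the paper's: both reduce surjectivity of $\varepsilon_G$ via Proposition~\ref{expoImage} to the equality $F_{\{\{1\}\}}(G)=B^\times(G)$, read this off from Proposition~\ref{computingSubfuncts}, and then use Lemma~\ref{quotientSubConnection} to replace the ``no nontrivial residual subquotient of a pseudodihedral quotient of $N$'' condition by ``no nontrivial residual quotient of $N$''. Your version is in fact more explicit than the paper's --- spelling out the orbit-counting and the passage through Theorem~\ref{simpleComputation} --- but there is no genuine difference in strategy.
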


\begin{proof}
By Proposition~\ref{computingSubfuncts}, we have $S_{1,\bF_2}(G)=B^\times(G)$ if and only if no nontrivial subquotient of a 
pseudodihedral quotient of $N$ is isomorphic to an element of 
$\mathcal{R}_{\C',B^\times}$. 

Suppose $M\unlhd N$, such that $N/M$ is pseudodihedral and $N/M$ has a nontrivial subquotient isomorphic to an element of 
$\mathcal{R}_{C',B^\times}$. Lemma~\ref{quotientSubConnection} implies that $N/M$ has a quotient isomorphic to a nontrivial element of $\mathcal{R}_{C',B^\times}$,
thus $N$ has a nontrivial quotient isomorphic to an element of $\mathcal{R}_{C',B^\times}$. It follows that $N/M$ will have a subquotient isomorphic to an element of 
$\mathcal{R}_{C',B^\times}$ if and only if $N/M$ has a quotient isomorphic to an element of $\mathcal{R}_{C',B^\times}$.

Hence $S_{1,\bF_2}(G)=B^\times(G)$ if and only if no nontrivial quotient of $N$ is isomorphic to an element of 
$\mathcal{R}_{\C',B^\times}$.
The result follows from Proposition~\ref{expoImage}.
\end{proof}

A particularly nice case of the above result is when we consider dihedral groups. In the following result, $\phi:\bN\to \bN$ denotes Euler's $\phi$-function.
 
\begin{cor}\label{surjectiveExpoDihedral}
For any positive integer $n\geq 3$, $\varepsilon_{D_{2n}}:B(D_{2n})\to B^\times (D_{2n})$ is surjective if and only if $\phi(n)\equiv 2\, (\text{mod} \,\,4).$
\end{cor}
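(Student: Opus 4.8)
\emph{Proof proposal.} The plan is to deduce the statement from Theorem~\ref{surjectiveExpo} together with the explicit description of residuals in Theorem~\ref{residualGroups}, after which only elementary number theory remains. First I would apply Theorem~\ref{surjectiveExpo} with $G=N=D_{2n}$, which is legitimate because $D_{2n}$ has the cyclic subgroup $C_n$ of index $2$ and $[D_{2n}:D_{2n}]=1$ is odd. By Proposition~\ref{completeSetRes} we may take $\mathcal{R}_{\C',B^\times}$ to be the set $\mathcal{R}$ of Theorem~\ref{residualGroups}, so $\varepsilon_{D_{2n}}$ is surjective if and only if no nontrivial quotient of $D_{2n}$ lies in $\mathcal{R}$. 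Every quotient of $D_{2n}$ is, up to isomorphism, trivial, $C_2$, $C_2\times C_2$, or $D_{2m}$ for some divisor $m\mid n$ with $m\geq 3$; since the trivial group is the only abelian member of $\mathcal{R}$, surjectivity of $\varepsilon_{D_{2n}}$ is equivalent to: no divisor $m\mid n$ with $m\geq 3$ has $D_{2m}\in\mathcal{R}$.

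Next I would spell out when $D_{2m}\in\mathcal{R}$ for $m\geq 3$. Writing $D_{2m}=C_2\ltimes C_m$, the index-$2$ abelian subgroup is $N=C_m$, and it is not simple iff $m$ is composite, has $|N_2|\neq 2$ iff $m$ is odd or $4\mid m$ (equivalently $m\not\equiv 2\pmod 4$), and has $N_{2'}$ with no element of order $q^2$ iff the odd part of $m$ is squarefree. Combining this with Part~(A) of Proposition~\ref{residualGroupsAbIndex2} for prime $m$ and with the exclusion of $D_8$, one gets: $D_{2m}\in\mathcal{R}$ iff either $m$ is a prime $\equiv 1\pmod 4$, or $m$ is composite, $m\neq 4$, $m\not\equiv 2\pmod 4$, and the odd part of $m$ is squarefree.

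It then remains to show that the condition ``no divisor $m\geq 3$ of $n$ gives $D_{2m}\in\mathcal{R}$'' holds precisely when $\phi(n)\equiv 2\pmod 4$. Recall, as recorded in Example~\ref{computation1}, that $\phi(n)\equiv 2\pmod 4$ exactly when $n=4$, $n=p^k$, or $n=2p^k$ with $p$ a prime $\equiv 3\pmod 4$. For these $n$ I would verify directly that every divisor $m\geq 3$ fails the criterion of the previous paragraph: a divisor $p^j$ with $j\geq 2$ has non-squarefree odd part, a divisor $2p^j$ satisfies $2p^j\equiv 2\pmod 4$, the prime $p$ is $\equiv 3\pmod 4$, and $m=4$ gives $D_8\notin\mathcal{R}$. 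For the converse I argue contrapositively: if $n\geq 3$ is not of one of these three forms, then $n$ must have a prime factor $\equiv 1\pmod 4$, or two distinct odd prime factors, or be divisible by $8$, or be divisible by $4$ and by an odd prime; in these four cases $m=p$, $m=pq$, $m=8$, or $m=4p$ respectively is a divisor of $n$ with $m\geq 3$ and $D_{2m}\in\mathcal{R}$, so $\varepsilon_{D_{2n}}$ is not surjective. A short check — writing $n=2^a\cdot(\text{odd part})$ and eliminating each possibility — confirms that these four alternatives exhaust all $n\geq 3$ with $\phi(n)\not\equiv 2\pmod 4$, which finishes the argument.

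The deep input is entirely contained in Theorems~\ref{surjectiveExpo} and~\ref{residualGroups}; the main obstacle is purely bookkeeping, most delicately the handling of the exceptional quotient $D_8$ (which is \emph{not} residual even though $C_4$ meets the other three conditions) and of the smallest divisors, together with confirming that the four alternatives above genuinely cover every $n\geq 3$ with $\phi(n)\not\equiv 2\pmod 4$.
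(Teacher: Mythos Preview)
Your argument is correct. The route you take is somewhat different from the paper's, though both rest on the same underlying machinery. The paper does not invoke Theorem~\ref{surjectiveExpo} at all; instead it goes back to Proposition~\ref{expoImage} and compares two explicit dimension formulas: it takes $\dim_{\bF_2}B^\times(D_{2n})$ from Example~\ref{unitGroupComputations} (equal to $d(n)+1$ or $d(n)+2$ according to parity) and $\dim_{\bF_2}S_{\{1\},\bF_2}(D_{2n})$ from Example~\ref{computation1} (essentially the count of divisors $d\mid n$ with $\phi(d)\equiv 2\pmod 4$, plus a parity-dependent constant), and then checks numerically that these agree precisely when $\phi(n)\equiv 2\pmod 4$. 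Your approach is arguably more structural: you apply the packaged criterion of Theorem~\ref{surjectiveExpo} directly and then analyze which divisors $m\mid n$ produce a residual $D_{2m}$, handling the four obstructions (a prime $\equiv 1\pmod 4$, two odd primes, a factor of $8$, a factor of $4p$) by cases. The paper's route is shorter on the page because the two Examples have already done the counting; your route avoids those Examples entirely and makes the dependence on the classification of residuals in Theorem~\ref{residualGroups} more transparent. Either way the delicate point is the same one you flag: the exceptional exclusion of $D_8$ from $\mathcal{R}$ is exactly what makes $n=4$ (and divisors $m=4$) behave correctly.
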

\begin{proof}
Proposition~\ref{expoImage}, implies that $\varepsilon_{D_{2n}}$ will be surjective if and only if 
\[\dim_{\bF_2}(S_{\{1\},\bF_2}(D_{2n}))=\dim_{\bF_2}(B^\times(D_{2n})).\] 
Recall that by Example~\ref{unitGroupComputations}, if we use $d:\bN\to \bN$, to denote the function that counts the number of positive divisors of any $n\in\bN$, then
$\dim_{\bF_2}(B^\times(D_{2n}))=d(n)+1$  if $n$ is odd, and $\dim_{\bF_2}(B^\times(D_{2n}))=d(n)+2$ if $n$ is even. By Example~\ref{computation1}, the function 
$s:\bN\to \bN$, where $s(n)=S_{\{1\},\bF_2}(D_{2n})$, is equal to the number of positive divisors $d|n$, such that 
$\phi(d)\equiv 2\, (\text{mod} \,\,4)$, plus $2$ or $4$, depending on whether $n$ is odd or even, respectively. If $n\geq 3$, one can verify that 
$s(n)=\dim_{\bF_2}(B^\times(D_{2n}))$ if and only if $\phi(n)\equiv 2\, (\text{mod} \,\,4)$.

\end{proof}

\end{document}